\DeclareMathOperator*\prox{prox}
\newcommand{\R}{{\mathbb R}}
\newcommand{\N}{{\mathbb N}}
\DeclareMathOperator{\argmin}{argmin}
\newcommand{\interior}{{\rm int}\kern 0.06em}
\newcommand{\inte}{{\rm int}\kern 0.06em}
\newcommand{\cl}{{\rm cl}\kern 0.06em}
\newcommand{\zer}{{\rm zer}\kern 0.06em}
\newcommand{\gph}{{\rm gph}\kern 0.06em}
\newcommand{\dom}{{\rm dom}\kern 0.06em}
\newcommand{\pr}{{\rm pr}\kern 0.06em}
\newcommand{\e}{\epsilon}
\newcommand{\n}{{\nabla}}
\newcommand{\To}{\longrightarrow}
\def\a{\alpha}
\def\b{\beta}
\def\d{\delta}
\def\<{\langle}
\def\>{\rangle}
\newcommand{\cH}{{\mathcal H}}
\newcommand{\demi}{\frac{1}{2}}
\newcommand{\ie}{{\it i.e.}\,\,}
\newcommand{\ds}{\displaystyle}
\renewcommand*{\backrefalt}[4]{%
\ifcase #1 %
(Not cited)%
\or
(Cited on p.~#2)%
\else
(Cited on pp.~#2)%
\fi
}
\begin{document}

\title{Convex optimization via inertial algorithms with vanishing Tikhonov regularization: fast convergence to the minimum norm solution}
\titlerunning{Fast convergence to the minimum norm solution}

\author{Hedy Attouch \and Szil\'ard Csaba L\'aszl\'o}
\authorrunning{H. Attouch, L\'aszl\'o} 

\institute{
H. Attouch \at IMAG, Univ. Montpellier, CNRS, Montpellier, France
\\
\email{hedy.attouch@umontpellier.fr} \and
S. L\'aszl\'o
\at Technical University of Cluj-Napoca, Department of Mathematics, Memorandumului 28, Cluj-Napoca, Romania\\ \email{szilard.laszlo@math.utcluj.ro}
}


\maketitle

\begin{abstract}
In a Hilbertian framework, for the minimization of a general convex differentiable function $f$, we introduce new inertial dynamics and algorithms that generate trajectories and iterates that converge fastly towards the minimizer of $f$ with minimum norm.
Our study is based on the non-autonomous version of the Polyak heavy ball method, which, at time $t$,  is associated   with the strongly convex function  obtained by adding to $f$ a Tikhonov regularization term with vanishing coefficient  $\e(t)$. In this dynamic, the damping coefficient is proportional to the square root of the  Tikhonov regularization parameter $\e(t)$.
By adjusting  the speed of convergence of
$\e(t)$ towards zero, we will obtain both  rapid convergence towards the infimal value of $f$, and  the strong convergence of the trajectories towards the element of minimum norm of the set of minimizers of $f$.  In particular, we obtain an improved version  of the dynamic of Su-Boyd-Cand\`es for the accelerated gradient method of Nesterov. This study naturally leads to  corresponding  first-order algorithms obtained by temporal discretization. In the case  of a proper lower semicontinuous and convex function $f$, we study  the  proximal algorithms in detail, and show that they benefit from similar properties.
\end{abstract}

\keywords{ Accelerated gradient methods; convex optimization; damped inertial dynamics; minimum norm solution;    Nesterov accelerated gradient method; Tikhonov approximation.}

\medskip

\noindent \textbf{AMS subject classification} 37N40, 46N10, 49M30, 65B99, 65K05, 65K10, 90B50, 90C25.

\setcounter{tocdepth}{2}


\markboth{H. Attouch, S. C. L\'aszl\'o}{Inertial optimization algorithms with vanishing Tikhonov regularization}


\section{Introduction}
Throughout the paper, $\mathcal H$ is a real Hilbert space which is endowed with the scalar product $\langle \cdot,\cdot\rangle$, with $\|x\|^2= \langle x,x\rangle    $ for  $x\in \mathcal H$.
 We consider
 the convex minimization problem
\begin{equation}\label{edo0001}
 \min \left\lbrace  f (x) : \ x \in \mathcal H \right\rbrace,
\end{equation}
where $f : \mathcal H \rightarrow \mathbb R$ is a convex continuously differentiable function whose solution set $S=\argmin f$ is  nonempty.
We aim at finding  by rapid methods the element of minimum norm of $S$.
As an original aspect of our approach, we start from the Polyak heavy ball with friction dynamic for strongly convex functions, and then adapt it to treat the case of general convex functions. Recall that
a function  $f: \cH \to \mathbb R$ is said to be $\mu$-strongly convex for some $\mu >0$ if   $f- \frac{\mu}{2}\| \cdot\|^2$ is convex.
In this setting, we have the  exponential convergence result:
\begin{theorem}\label{strong-conv-thm}
Suppose that $f: \cH \to \mathbb R$ is a function of class ${\mathcal C}^1$ which is $\mu$-strongly convex for some $\mu >0$.
Let  $x(\cdot): [t_0, + \infty[ \to \cH$ be a solution trajectory of
\begin{equation}\label{dyn-sc-a}
\ddot{x}(t) + 2\sqrt{\mu} \dot{x}(t)  + \nabla f (x(t)) = 0.
\end{equation}
 Then, the following property holds:
$
f(x(t))-  \min_{\mathcal H}f  = \mathcal O \left( e^{-\sqrt{\mu}t}\right)$ \;  as  $t \to +\infty$.
%
\end{theorem}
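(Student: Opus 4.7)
The plan is to prove the result by constructing a Lyapunov function that decays exponentially. Because $f$ is $\mu$-strongly convex, it admits a unique minimizer $x^{*}$; I would work relative to this point and to the minimum value $f^{*}=\min_{\mathcal H} f$.

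The candidate I would try is the energy
\[
\mathcal{E}(t) \; := \; f(x(t)) - f^{*} \; + \; \tfrac{1}{2}\bigl\| \sqrt{\mu}\,(x(t)-x^{*}) + \dot{x}(t) \bigr\|^{2}.
\]
The square term is the natural one for critical damping $2\sqrt{\mu}$: it blends position and velocity in the combination that diagonalizes the associated linear oscillator $\ddot{y}+2\sqrt{\mu}\,\dot{y}+\mu y=0$. The key computation is to differentiate $\mathcal{E}$ along a trajectory, substitute $\ddot{x} = -2\sqrt{\mu}\,\dot{x} - \nabla f(x)$ coming from \eqref{dyn-sc-a}, and check by direct algebra that the gradient crossterms collapse, leaving
\[
\dot{\mathcal{E}}(t) \;=\; -\sqrt{\mu}\,\|\dot{x}(t)\|^{2} \;-\; \mu\,\langle x(t)-x^{*},\dot{x}(t)\rangle \;-\; \sqrt{\mu}\,\langle \nabla f(x(t)), x(t)-x^{*}\rangle .
\]
At this point I would invoke the strong convexity inequality $\langle \nabla f(x), x-x^{*}\rangle \geq f(x)-f^{*} + \tfrac{\mu}{2}\|x-x^{*}\|^{2}$.

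The main obstacle, which is really just a matching of coefficients, is to show that these bounds combine into the differential inequality $\dot{\mathcal{E}}(t) \leq -\sqrt{\mu}\,\mathcal{E}(t)$. Expanding $-\sqrt{\mu}\,\mathcal{E}(t)$ and comparing with the bound above, one finds the difference is $-\tfrac{\sqrt{\mu}}{2}\|\dot{x}(t)\|^{2}\leq 0$, so the inequality holds. This is precisely where the choice of damping $2\sqrt{\mu}$ (and not a slightly smaller coefficient) becomes essential.

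Once the differential inequality is secured, Gronwall's lemma yields $\mathcal{E}(t) \leq \mathcal{E}(t_{0})\, e^{-\sqrt{\mu}(t-t_{0})}$. Since $f(x(t))-f^{*} \leq \mathcal{E}(t)$ by construction, the announced estimate $f(x(t))-\min_{\mathcal H} f = \mathcal{O}(e^{-\sqrt{\mu}\,t})$ follows. As a bonus, the same bound applied to the square term delivers $\|\sqrt{\mu}(x(t)-x^{*})+\dot{x}(t)\| = \mathcal{O}(e^{-\sqrt{\mu}t/2})$, which will be useful later when this dynamic is perturbed by a vanishing Tikhonov term.
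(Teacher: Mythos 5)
Your proof is correct: the differentiation of $\mathcal{E}$, the strong convexity inequality $\langle \nabla f(x), x-x^{*}\rangle \geq f(x)-f^{*}+\tfrac{\mu}{2}\|x-x^{*}\|^{2}$, and the coefficient matching leaving the slack term $-\tfrac{\sqrt{\mu}}{2}\|\dot{x}(t)\|^{2}$ all check out, and Gronwall then gives the claimed rate. The paper states this theorem without proof (it is classical), but your Lyapunov function is exactly the template the authors use throughout --- it is the specialization of their energy $f(x(t))-f^{\ast}+\tfrac12\|c(t)(x(t)-x^{\ast})+\dot{x}(t)\|^{2}$ with $c(t)\equiv\sqrt{\mu}$ --- so your argument is in complete harmony with the paper's approach.
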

Let us  see how to take advantage of this fast convergence result, and how to adapt it to the case of a general convex differentiable function $f: \cH \to \mathbb R$. The main idea is linked to Tikhonov's method of regularization. It consists in considering  the corresponding non-autonomous dynamic which at time $t$ is governed by the gradient of the strongly convex function $f_t: \cH \to \mathbb R$
 $$
f_t (x) := f(x) + \frac{\e(t)}{2} \|x\|^2.
 $$
  Then replacing $f$ by $f_t$ in
\eqref{dyn-sc-a}, and noticing that $f_t$ is $\e (t)$-strongly convex, we obtain the dynamic
\begin{equation*}
{\rm(TRIGS)} \qquad \ddot{x}(t) + \d\sqrt{\e(t)}  \dot{x}(t) + \nabla f (x(t)) + \e (t) x(t) =0,
\end{equation*}
 with $\delta=2$. (TRIGS) stands shortly for Tikhonov regularization of inertial gradient systems.
In order not to asymptotically modify the equilibria, we suppose that $\e (t) \to 0$ as $t\to +\infty$. This condition implies that (TRIGS) falls within the framework of the inertial gradient systems with asymptotically vanishing damping.
The importance of this  class of inertial dynamics  has been highlighted by several recent studies
\cite{AAD1}, \cite{ABotCest}, \cite{AC10}, \cite{ACPR}, \cite{AP}, \cite{CD}, \cite{SBC}, which make the link with the accelerated gradient method of Nesterov  \cite{Nest1,Nest2}.

\subsection{Historical facts and related results}

In relation to optimization algorithms, a rich literature has been devoted to the coupling of dynamic gradient systems with Tikhonov regularization.

\subsubsection{First-order gradient dynamics}
 For first-order gradient systems and subdifferential inclusions, the asymptotic hierarchical minimization property which results from the introduction of a vanishing viscosity term in the dynamic (in our context the Tikhonov approximation \cite{Tikh,TA}) has been highlighted in a series of papers  \cite{AlvCab}, \cite{Att2},   \cite{AttCom}, \cite{AttCza2}, \cite{BaiCom}, \cite{CPS}, \cite{Hirstoaga}.
In parallel way, there is  a vast literature on convex descent algorithms involving Tikhonov and more
general penalty, regularization terms. The  historical evolution
 can be traced back to Fiacco and McCormick \cite{FM}, and the interpretation of interior point methods with the help of a vanishing logarithmic barrier.
Some more specific references for the coupling of Prox and Tikhonov can be found in
Cominetti \cite{Com}.
The time discretization  of  the first-order gradient systems and subdifferential inclusions involving multiscale (in time) features provides a natural link between the continuous and discrete  dynamics. The  resulting algorithms  combine  proximal based methods (for example forward-backward algorithms), with the viscosity of penalization methods, see
\cite{AttCzaPey1}, \cite{AttCzaPey2}, \cite{BotCse1}, \cite{Cabot-inertiel,Cab}, \cite{Hirstoaga}.

\medskip

\subsubsection{Second order gradient dynamics}

First studies concerning the coupling of damped inertial dynamics with Tikhonov approximation concerned  the heavy ball with friction system of Polyak \cite{Polyak},
where the damping coefficient $\gamma >0$ is  fixed. In   \cite{AttCza1} Attouch-Czarnecki considered the  system
\begin{equation}\label{HBF-Tikh}
 \ddot{x}(t) + \gamma \dot{x}(t) + \nabla f(x(t)) + \e (t) x(t) =0.
\end{equation}
In the slow parametrization case $\int_0^{+\infty} \e (t) dt = + \infty$, they proved that  any solution $x(\cdot)$ of \eqref{HBF-Tikh} converges strongly to the minimum norm element of $\argmin f$, see also \cite{JM-Tikh}.
 A parallel study has been developed  for PDE's, see \cite{AA} for damped hyperbolic equations with non-isolated equilibria, and
\cite{AlvCab} for semilinear PDE's.
The system \eqref{HBF-Tikh} is a special case of the general dynamic model
\begin{equation}\label{HBF-multiscale}
\ddot{x}(t) + \gamma \dot{x}(t) + \nabla f (x(t)) + \e (t) \nabla g (x(t))=0
\end{equation}
which involves two  functions $f$ and $g$ intervening with different time scale. When $\e (\cdot)$ tends to zero moderately slowly, it was shown in \cite{Att-Czar-last} that the trajectories of (\ref{HBF-multiscale})  converge asymptotically to equilibria that are solutions of the following hierarchical problem: they minimize the function $g$ on the set of minimizers of $f$.
When $\mathcal H= {\mathcal H}_1\times {\mathcal H}_2$ is a product space, defining for $x=(x_1,x_2)$, $f (x_1,x_2):= f_1 (x_1)+f_2 (x_2)$ and   $g(x_1,x_2):= \|A_1 x_1 -A_2 x_2 \|^2$, where the $A_i,\, i\in\{1,2\}$ are linear operators, (\ref{HBF-multiscale})  provides (weakly) coupled inertial systems.
The continuous and discrete-time versions of these systems have a natural connection to the best response dynamics for potential games \cite{AttCza2}, domain decomposition for PDE's \cite{abc2}, optimal transport \cite{abc}, coupled wave equations \cite{HJ2}.

\noindent In the quest for a faster convergence, the following system
\begin{equation}\label{edo001-0}
 \mbox{(AVD)}_{\alpha, \e} \quad \quad \ddot{x}(t) + \frac{\alpha}{t} \dot{x}(t) + \nabla f (x(t)) +\e(t) x(t)=0,
\end{equation}
has been studied by Attouch-Chbani-Riahi \cite{ACR}.
It is a Tikhonov regularization of the  dynamic
\begin{equation}\label{edo001}
 \mbox{(AVD)}_{\alpha} \quad \quad \ddot{x}(t) + \frac{\alpha}{t} \dot{x}(t) + \nabla f (x(t))=0,
\end{equation}
which was introduced by  Su, Boyd and
Cand\`es in \cite{SBC}. When $\alpha =3$, $\mbox{(AVD)}_{\alpha}$  can be viewed as a continuous version of the   accelerated gradient method of Nesterov.
It has been the subject of many recent studies which have given an in-depth understanding of the Nesterov acceleration method, see  \cite{AAD1}, \cite{AC10}, \cite{ACPR}, \cite{SBC}.
 The results obtained in \cite{ACR} concerning \eqref{edo001-0} will serve as a basis for comparison.

\subsection{Model results}
To illustrate our results, let us consider the case $\e (t) = \frac{c}{t^r}$ where $r$ is positive parameter satisfying $0<r \leq 2$. The case $r=2$ is of particular interest, it is related to the continuous version of the accelerated gradient method of Nesterov, with optimal convergence rate for general convex differentiable function $f$.

   \subsubsection{Case $r=2$}
   Let us consider the (TRIGS) dynamic
\begin{equation}\label{DynSys-crit-model}
\ddot{x}(t) + \frac{\a}{t} \dot{x}(t) + \nabla f\left(x(t) \right)+\frac{c}{t^2}x(t)=0,
\end{equation}
where the parameter $\alpha \geq 3$ plays a crucial role.
As a consequence of Theorems  \ref{RateConvergenceResult} and \ref{StrongConvergence} we have

\begin{theorem}\label{RateConvergenceResult-model}
Let $x : [t_0, +\infty[ \to \mathcal{H}$ be a solution of  \eqref{DynSys-crit-model}.
We then have the following results:

$i)$ If $\a=3$, then
$\displaystyle f\left(x(t)\right) - \min_{\cH} f =O\left(\frac{\ln t}{t^2}\right) \mbox{ as }t\to+\infty.$

$ii)$ If $\a>3$, then
$\displaystyle f\left(x(t)\right) - \min_{\cH} f =O\left(\frac{1}{t^2}\right) \mbox{ as }t\to+\infty.$ Further, the trajectory $x$ is bounded,
$\displaystyle \|\dot{x}(t)\| =O\left(\frac{1}{t}\right) \mbox{ as }t\to+\infty$, and
there is  strong convergence to the minimum norm solution:
$$ \liminf_{t \to +\infty}{\| x(t) - x^\ast \|} = 0.$$
\end{theorem}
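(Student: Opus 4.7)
The plan is to analyze \eqref{DynSys-crit-model} through a Lyapunov function modelled on the Su--Boyd--Cand\`es energy for $(\mbox{AVD})_\alpha$ but corrected to accommodate the Tikhonov drift $\tfrac{c}{t^2}x(t)$. Writing $\e(t):=c/t^2$ and $f_t(x):=f(x)+\tfrac{\e(t)}{2}\|x\|^2$ (which is $\e(t)$-strongly convex), and letting $x^\ast$ be the minimum-norm element of $S=\argmin f$, I would work with
\[
\mathcal{E}(t)\;:=\;t^2\bigl(f_t(x(t))-f_t(x^\ast)\bigr)\;+\;\tfrac{1}{2}\bigl\|(\alpha-1)(x(t)-x^\ast)+t\,\dot x(t)\bigr\|^2,
\]
possibly supplemented by a non-negative corrector proportional to $\|x(t)-x^\ast\|^2$ to cope with $\dot\e$.

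Differentiating along a trajectory, substituting $\nabla f(x)+\e(t)x=-\ddot x-\tfrac{\alpha}{t}\dot x$ from the ODE so that the $\pm t^2\langle\dot x,\nabla f_t(x)\rangle$ terms cancel, and invoking the $\e(t)$-strong convexity inequality $\langle\nabla f_t(x),x-x^\ast\rangle\ge f_t(x)-f_t(x^\ast)+\tfrac{\e(t)}{2}\|x-x^\ast\|^2$, a direct computation (with $\e(t)=c/t^2$ and the identity $\|x\|^2-\|x^\ast\|^2=\|x-x^\ast\|^2+2\langle x^\ast,x-x^\ast\rangle$) should yield
\[
\dot{\mathcal{E}}(t)\;\le\;(3-\alpha)\,t\bigl(f_t(x(t))-f_t(x^\ast)\bigr)-\tfrac{(\alpha+1)c}{2t}\|x(t)-x^\ast\|^2-\tfrac{2c}{t}\langle x^\ast,x(t)-x^\ast\rangle.
\]
The main obstacle is the last cross-term: it has indefinite sign and only $1/t$ (log-integrable) decay, so it cannot simply be discarded. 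I would tame it via Young's inequality, absorbing part of it into the negative quadratic with a strictly positive margin and leaving an $O(1/t)\|x^\ast\|^2$ residual.

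For $\alpha>3$, the dissipative coefficient $(3-\alpha)\,t(f_t-f_t(x^\ast))$ provides an extra margin that, combined with the uniform minoration $f_t(x)-f_t(x^\ast)\ge -\tfrac{c}{2t^2}\|x^\ast\|^2$ valid on the region where $f_t(x)-f_t(x^\ast)<0$, absorbs the residual; integration then gives $\mathcal{E}(t)=O(1)$, whence $f(x(t))-\min f=O(1/t^2)$ via the lower bound $\mathcal{E}(t)\ge t^2(f(x(t))-f^\ast)-\tfrac{c}{2}\|x^\ast\|^2$. Boundedness of $x(\cdot)$ follows from an auxiliary inequality on $h(t):=\tfrac12\|x(t)-x^\ast\|^2$ obtained by pairing the ODE with $x(t)-x^\ast$ and using convexity, after which $\|\dot x(t)\|=O(1/t)$ is immediate from the bound $\|(\alpha-1)(x-x^\ast)+t\dot x\|=O(1)$ together with the boundedness of $x$. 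In the borderline case $\alpha=3$ the absorbing margin is absent, so the $O(1/t)$ residual integrates to $O(\ln t)$, producing $\mathcal{E}(t)=O(\ln t)$ and the announced $O(\ln t/t^2)$ rate on the objective.

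Finally, for the strong-convergence statement $\liminf_{t\to+\infty}\|x(t)-x^\ast\|=0$, I would argue by contradiction: if $\|x(t)-x^\ast\|\ge\eta>0$ on an unbounded set, then the negative term $-\tfrac{(\alpha+1)c}{2t}\|x-x^\ast\|^2$ appearing in $\dot{\mathcal{E}}$ contributes a non-integrable drift of order $-c\eta^2/t$ which, after integration, contradicts the uniform lower bound $\mathcal{E}(t)\ge-\tfrac{c}{2}\|x^\ast\|^2$. Because $\int^{+\infty}\e(t)\,dt<\infty$ for $\e(t)=c/t^2$, one is in the fast-parametrization regime where the slow-parametrization arguments of \cite{AttCza1,JM-Tikh} do not apply, which explains why only $\liminf$, and not full strong convergence, is expected to be available.
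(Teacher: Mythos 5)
Your Lyapunov computation up to the differential inequality $\dot{\mathcal{E}}(t)\le(3-\alpha)\,t\bigl(f_t(x(t))-f_t(x^\ast)\bigr)-\frac{(\alpha+1)c}{2t}\|x(t)-x^\ast\|^2-\frac{2c}{t}\langle x^\ast,x(t)-x^\ast\rangle$ is correct, and it does deliver part $i)$: for $\alpha=3$ the first term drops, Young's inequality leaves the residual $\frac{c}{t}\|x^\ast\|^2$, and integration gives $\mathcal{E}(t)=O(\ln t)$, hence the $O(\ln t/t^2)$ rate. The gap is in part $ii)$. After Young you are left with $\dot{\mathcal{E}}\le -\frac{\alpha-3}{t}\,t^2\bigl(f_t(x)-f_t(x^\ast)\bigr)-\frac{(\alpha-1)c}{2t}\|x-x^\ast\|^2+\frac{c}{t}\|x^\ast\|^2$, and the two negative terms control only the potential and anchor parts of $\mathcal{E}$, not the kinetic contribution $t^2\|\dot x\|^2$ hidden inside $\frac12\|(\alpha-1)(x-x^\ast)+t\dot x\|^2$. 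Moreover $t^2\bigl(f_t(x)-f_t(x^\ast)\bigr)$ can be negative, bounded below only by $-\frac{c}{2}\|x^\ast\|^2$, so your ``extra margin'' contributes at worst another $+\frac{(\alpha-3)c}{2t}\|x^\ast\|^2$. Integrating therefore still yields $\mathcal{E}(t)=O(\ln t)$, i.e.\ only $O(\ln t/t^2)$ for the values, not the claimed $O(1/t^2)$. To beat the logarithm one needs the full Gronwall form $\dot{\mathcal{E}}+\frac{\sigma}{t}\mathcal{E}\le\frac{C}{t}$ with $\sigma>0$, where the \emph{whole} energy sits on the left; this is exactly what the paper arranges by taking the anchor coefficient $K$ strictly inside $\bigl]\frac{\alpha+1}{2},\alpha-1\bigr[$ with $(\alpha-K-1)K^2\le Kc$, so that the coefficient $\frac{K-\alpha+1}{2t}$ of $\|\dot x\|^2$ is strictly negative and the cross term $\langle\dot x,x-x^\ast\rangle$ cancels identically. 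Your choice of anchor coefficient $\alpha-1$ is precisely the degenerate endpoint where that coefficient vanishes, and the argument cannot close.

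The strong-convergence step has a similar quantitative defect: if $\|x(t)-x^\ast\|\ge\eta$ for all large $t$, the negative drift is $-\frac{(\alpha-1)c}{2t}\eta^2$ while the positive residuals total $\frac{(\alpha-1)c}{2t}\|x^\ast\|^2$, so your contradiction only materializes when $\eta>\|x^\ast\|$; it proves $\liminf_{t\to+\infty}\|x(t)-x^\ast\|\le\|x^\ast\|$, not $=0$. The paper instead proves the $\liminf$ statement by a geometric trichotomy on the position of the trajectory relative to the ball $B(0,\|x^\ast\|)$: when $\|x(t)\|\ge\|x^\ast\|$ eventually, the term $\|x(t)\|^2-\|x^\ast\|^2$ has a favorable sign and can be discarded from the differential inequality, giving $f_t(x(t))-f_t(x^\ast)=O(t^{-(\alpha+1-K)})=o(1/t^2)$ and hence $\|x(t)-x_t\|\to0$ by comparison with the Tikhonov path $x_t=\argmin f_t$ (which satisfies $\|x_t\|\le\|x^\ast\|$ and $x_t\to x^\ast$); when $\|x(t)\|<\|x^\ast\|$ eventually, weak convergence to $x^\ast$ combined with convergence of the norms upgrades to strong convergence; otherwise one extracts a sequence $t_n\to+\infty$ with $\|x(t_n)\|=\|x^\ast\|$ and argues on it. None of this is reachable from your single differential inequality, so the $\liminf$ claim remains unproved in your proposal.
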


 \subsubsection{Case $r<2$}

As a consequence of Theorems \ref{RateConvergenceResult-bb} and \ref{StrongRateConvergenceResult-bb}, we have:

\begin{theorem}\label{thm:model}
Take $\e(t)=1/t^r$, $  \frac{2}{3} <r<2$.
Let $x : [t_0, +\infty[ \to \mathcal{H}$ be a global solution trajectory of
$$
\ddot{x}(t) +  \frac{\d}{t^{\frac{r}{2}}}\dot{x}(t) + \nabla f\left(x(t) \right)+ \frac{1}{t^{r}} x(t)=0.
$$
Then, we have fast convergence the values, and strong convergence to the minimum norm solution:
$$
f(x(t))-\min_{\cH} f= \mathcal O \left( \displaystyle{ \frac{1}{t^{\frac{3r}{2}-1}} }   \right) \mbox{ and } \liminf_{t \to + \infty}{\| x(t) - x^\ast \|} = 0.
$$
\end{theorem}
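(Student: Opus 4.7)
The statement is presented in the excerpt as a consequence of two general theorems, so my plan is to specialize $\e(t)=1/t^r$ in those results; but to expose the real mechanism let me sketch how their underlying Lyapunov analysis would unfold here.

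\smallskip
\noindent\emph{Step 1 (Tikhonov anchor).} I would first introduce the parametrized strongly convex function $f_t(x):=f(x)+\frac{1}{2t^r}\|x\|^2$ and its unique minimizer $x_{\e(t)}:=\argmin f_t$. Standard Tikhonov theory gives the optimality relation $\nabla f(x_{\e(t)})+\frac{1}{t^r}x_{\e(t)}=0$, the bound $\|x_{\e(t)}\|\le \|x^\ast\|$, the convergence $x_{\e(t)}\to x^\ast$ as $t\to+\infty$ where $x^\ast$ denotes the minimum-norm element of $S$, and a differentiability estimate $\|\tfrac{d}{dt}x_{\e(t)}\|=O(|\dot\e(t)|/\e(t))=O(1/t)$. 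These facts let me \emph{replace} $x^\ast$ by the time-varying anchor $x_{\e(t)}$ in energy estimates, which is crucial because $f_t$ is $\e(t)$-strongly convex at $x_{\e(t)}$.

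\smallskip
\noindent\emph{Step 2 (Lyapunov function for the values).} To target the rate $O(1/t^{3r/2-1})$ I would form
$$
\mathcal{E}(t)=t^{p}\Bigl(f(x(t))-\min_{\cH}f+\tfrac{1}{2t^r}\|x(t)\|^2-\tfrac{1}{2t^r}\|x_{\e(t)}\|^2\Bigr)+\tfrac{1}{2}\bigl\|\lambda(x(t)-x^\ast)+t^{r/2}\dot x(t)\bigr\|^2,
$$
with the natural choice $p=\tfrac{3r}{2}-1$, which is strictly positive precisely because $r>\tfrac23$. Differentiating and plugging in (TRIGS), the term $t^r\langle\dot x,\ddot x\rangle$ produces $-\delta t^{r/2}\|\dot x\|^2-t^r\langle\dot x,\nabla f(x)\rangle-\langle\dot x,x\rangle$; combining this with the convex inequality $f(x^\ast)\ge f(x)+\langle\nabla f(x),x^\ast-x\rangle$ and the $\e(t)$-strong convexity of $f_t$ at $x_{\e(t)}$, the cross terms in $\nabla f(x)$ cancel and what remains is a dissipation term (positive since $r<2$ makes $\delta t^{r/2}\gg \tfrac{r}{2}t^{r-1}$ for large $t$) plus perturbations driven by $\|x_{\e(t)}-x^\ast\|$ and $\|\tfrac{d}{dt}x_{\e(t)}\|$. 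Integrating the resulting inequality yields the announced decay of $f(x(t))-\min f$.

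\smallskip
\noindent\emph{Step 3 (Strong convergence, liminf).} For the second conclusion I would introduce a second Lyapunov functional centered at $x_{\e(t)}$ and use the strong-convexity bound $\e(t)\|x(t)-x_{\e(t)}\|^2\le 2(f_t(x(t))-f_t(x_{\e(t)}))$ together with the value estimate of Step~2 to prove that
$$
\int_{t_0}^{+\infty}\frac{1}{t^{r/2}}\,\e(t)\,\|x(t)-x_{\e(t)}\|^2\,dt<+\infty.
$$
Since $r/2<1$ and $r<2$ give $\int^{+\infty}\! t^{-3r/2}\,dt=+\infty$, the integrand cannot stay bounded away from zero, forcing $\liminf \|x(t)-x_{\e(t)}\|=0$. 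Combined with $x_{\e(t)}\to x^\ast$, this yields $\liminf \|x(t)-x^\ast\|=0$.

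\smallskip
\noindent\emph{Main obstacle.} The real difficulty is the Step~2 computation: one must tune the exponent in the kinetic coefficient $t^{r/2}$ and the anchor weight $\lambda$ so that (i) the $\nabla f$ cross terms cancel via convexity, (ii) the damping $\delta/t^{r/2}$ dominates the spurious $t^{r-1}$ term created by differentiating $t^{r}\|\dot x\|^2$, and (iii) the error terms caused by the \emph{moving} anchor $x_{\e(t)}$ integrate against $t^p$ to a finite quantity. This last integrability is precisely where the threshold $r>\tfrac{2}{3}$ becomes necessary, while $r<2$ is what keeps the asymptotically vanishing damping strong enough for both the value rate and the liminf argument to close.
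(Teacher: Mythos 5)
Your proposal contains two genuine gaps. First, in Step~2 the weights of your Lyapunov function are incompatible: differentiating the kinetic block $\tfrac12\|\lambda(x-x^\ast)+t^{r/2}\dot x\|^2$ and substituting the equation produces the cross term $-t^{r}\langle\nabla f(x(t)),\dot x(t)\rangle$, while differentiating $t^{p}\bigl(f(x(t))-\min_{\cH} f\bigr)$ produces $+t^{p}\langle\nabla f(x(t)),\dot x(t)\rangle$; these cancel only if $p=r$, i.e.\ $\tfrac{3r}{2}-1=r$, i.e.\ $r=2$, which is excluded here. For $r<2$ the leftover $(t^{p}-t^{r})\langle\nabla f(x(t)),\dot x(t)\rangle$ has no sign and cannot be absorbed, so the asserted cancellation ``via convexity'' does not take place. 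The paper avoids this by keeping every term of the energy at scale $O(1)$, namely $\mathcal E(t)=f(x(t))-f^\ast+\frac{\e(t)}{2}\|x(t)\|^2+\frac12\|K\sqrt{\e(t)}(x(t)-x^\ast)+\dot x(t)\|^2$ with the \emph{fixed} anchor $x^\ast$ (the Tikhonov curve $x_{\e(t)}$ and its derivative are never needed for the value rate), deriving $\dot{\mathcal E}+\mu\mathcal E\le\frac{K\|x^\ast\|^2}{2}\e^{3/2}$ with $\mu(t)=-\frac{\dot\e(t)}{2\e(t)}+(\d-K)\sqrt{\e(t)}$, and then extracting the rate from the integrating factor $\mathfrak M(t)\sim t^{r/2}\exp\bigl(\frac{2(\d-K)}{2-r}t^{1-r/2}\bigr)$: the polynomial rate $t^{-(3r/2-1)}$ comes from the estimate $\frac{1}{\mathfrak M(t)}\int^{t}\e^{3/2}(s)\mathfrak M(s)\,ds\lesssim\int_{t/2}^{t}s^{-3r/2}ds$, not from a polynomial prefactor on the energy.

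Second, Step~3 rests on a false assertion: for $\tfrac23<r<2$ one has $\tfrac{3r}{2}>1$, hence $\int^{+\infty}t^{-3r/2}dt<+\infty$ --- the exact opposite of what you claim --- so finiteness of $\int t^{-3r/2}\|x(t)-x_{\e(t)}\|^2dt$ gives no information on $\liminf\|x(t)-x_{\e(t)}\|$. (Indeed $r>\tfrac23$ is precisely the condition $\e^{3/2}\in L^1$ appearing in the paper's hypotheses, so your mechanism points in the wrong direction.) The paper obtains $\liminf_{t\to+\infty}\|x(t)-x^\ast\|=0$ by a geometric trichotomy on the position of the trajectory relative to the ball $B(0,\|x^\ast\|)$: if $\|x(t)\|\ge\|x^\ast\|$ eventually, a decay estimate on $E(t)=f_t(x(t))-f_t(x^\ast)+\frac12\|K\sqrt{\e(t)}(x(t)-x^\ast)+\dot x(t)\|^2$ combined with the $\e(t)$-strong convexity of $f_t$ forces $\|x(t)-x_{\e(t)}\|\to0$; if $\|x(t)\|<\|x^\ast\|$ eventually, every weak sequential cluster point is identified with $x^\ast$ and convergence of the norms upgrades weak to strong convergence; otherwise one extracts $t_n\to+\infty$ with $\|x(t_n)\|=\|x^\ast\|$ and runs the same weak-to-strong argument along that sequence. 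Some substitute for this case analysis is required; the integral argument you propose cannot close.
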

These results are completed by showing that, if there exists $T \geq t_0$, such that the trajectory $\{ x(t) :t \geq T \}$ stays either in the open ball $B(0, \| x^\ast \|)$ or in its complement, then $x(t)$ converges strongly to $x^*$ as $t\to+\infty.$
Corresponding results for the associated proximal algorithms, obtained by temporal discretization, are obtained in Section \ref{intro-dyn-RIPA}.

A remarkable property of the above results is that the rate of convergence of values is comparable to  the Nesterov accelerated gradient method. In addition, we have a strong convergence property to the  minimum norm solution,  with  comparable numerical complexity.
These results represent an important advance compared to previous works by producing new dynamics for which we have both  rapid convergence of values and strong convergence towards the solution of minimum norm.
Let us stress the fact that in our approach the fast convergence of the values and the strong convergence towards the solution of  minimum norm are obtained for the same dynamic, whereas in the previous works \cite{ACR}, \cite{AttCza1}, they are obtained for different dynamics obtained for different settings of the parameters.
It is clear that the results extend naturally to obtaining  strong convergence towards the solution closest to a desired state $x_d$.
It suffices to replace in Tikhonov's approximation $\|x\|^2$ by
$\|x-x_d\|^2$. This is important for inverse problems.

\subsection{Contents}
 In section \ref{sec:general}, we show existence and uniqueness of a  global solution for  the Cauchy problem associated with (TRIGS).
Then,  based on  Lyapunov analysis, we obtain convergence rates of the values  which are valid for a general $\e (\cdot)$.
Section \ref{sec:critical} is  devoted to an
in-depth analysis in the  critical case $\e(t) = c/t^2$.
Section \ref{sec:strong} is devoted to the study of the strong convergence property of the trajectories towards the minimum norm solution, in the case of a general $\e(\cdot)$.
Then in Section \ref{intro-dyn-RIPA} we obtain similar results  for the associated proximal algorithms, obtained by temporal discretization.

\section{Convergence analysis for general $\e(t)$}\label{sec:general}
We are going to analyze via Lyapunov analysis the convergence properties as $t\to +\infty$ of the solution trajectories of the inertial dynamic (TRIGS) that we recall below
\begin{equation}\label{TRIGS}
 \ddot{x}(t) + \d\sqrt{\e(t)}  \dot{x}(t) + \nabla f (x(t)) + \e (t) x(t) =0.
\end{equation}

Throughout the paper, we assume that $t_0$ is the origin of time, $\delta$ is a positive parameter, and

\begin{description}

\item[ $(H_1)$ ]   $f : \mathcal H \rightarrow \mathbb R$ is convex and differentiable,  $\nabla f$ is Lipschitz continuous on  bounded sets.

\smallskip

\item[ $(H_2)$ ]  $S := \mbox{argmin} f \neq \emptyset$. We denote by $x^*$ the element of minimum norm of $S$.

\smallskip

\item[ $(H_3)$ ]  $\e : [t_0 , +\infty [ \to \mathbb R^+ $ is   a nonincreasing function, of class $\mathcal C^1$, such that \  $\lim_{t \to \infty}\e (t) =0$.
\end{description}

\subsection{Existence and uniqueness for the Cauchy problem}
Let us first show that the Cauchy problem for (TRIGS) is well posed.

\begin{theorem}\label{Cauchy-weel-posed}
Given $(x_0, v_0) \in \cH \times \cH$, there exists a unique global classical solution $x : [t_0, +\infty[ \to \mathcal{H}$ of the Cauchy problem
\begin{align}\label{DynSys-1}
\begin{cases}
\ddot{x}(t) + \d\sqrt{\e(t)} \dot{x}(t) + \nabla f\left(x(t) \right)+\e (t) x(t)=0 \vspace{2mm}\\
x(t_0) = x_0, \,
\dot{x}(t_0) = v_0.
\end{cases}
\end{align}
\end{theorem}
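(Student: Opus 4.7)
The plan is to recast the Cauchy problem as a first-order system on $\cH\times\cH$, apply the Cauchy--Lipschitz theorem for local existence and uniqueness, and then rule out finite-time blow-up via an energy estimate.

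First, I would set $Z(t) = (x(t),\dot{x}(t))$ and rewrite \eqref{DynSys-1} as $\dot Z(t) = F(t,Z(t))$, where
\[
F\bigl(t,(x,v)\bigr) = \Bigl(v,\; -\delta\sqrt{\e(t)}\,v - \nabla f(x) - \e(t)\,x\Bigr).
\]
By $(H_3)$ the functions $t \mapsto \e(t)$ and $t \mapsto \sqrt{\e(t)}$ are continuous, and by monotonicity they are bounded on $[t_0,+\infty[$ by $\e(t_0)$. Combined with the local Lipschitz property of $\nabla f$ on bounded sets from $(H_1)$, this shows that $F$ is continuous in $t$ and locally Lipschitz in $(x,v)$, uniformly on any compact time interval. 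The Cauchy--Lipschitz theorem on the Hilbert space $\cH\times\cH$ then yields a unique maximal classical solution $x$ defined on some interval $[t_0,T_{\max}[$.

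To prove $T_{\max}=+\infty$, the key ingredient is the Lyapunov-type functional
\[
E(t) := \demi\|\dot x(t)\|^2 + \bigl(f(x(t))-\min_{\cH} f\bigr) + \frac{\e(t)}{2}\|x(t)\|^2,
\]
which is manifestly nonnegative by $(H_2)$ and $(H_3)$. Differentiating and substituting $\ddot x = -\delta\sqrt{\e}\,\dot x - \nabla f(x)-\e\,x$ from \eqref{DynSys-1}, the cross terms $\langle \nabla f(x),\dot x\rangle$ and $\e\langle x,\dot x\rangle$ cancel pairwise, leaving
\[
\dot E(t) = -\delta\sqrt{\e(t)}\,\|\dot x(t)\|^2 + \frac{\dot\e(t)}{2}\|x(t)\|^2 \leq 0,
\]
since $\e(t)\geq 0$ and $\dot\e(t)\leq 0$ by $(H_3)$. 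Hence $E(t)\leq E(t_0)$ throughout $[t_0,T_{\max}[$.

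From this bound and the nonnegativity of the remaining terms, one extracts $\|\dot x(t)\|\leq \sqrt{2E(t_0)}$ for every $t$. Integrating gives $\|x(t)\|\leq \|x_0\| + (t-t_0)\sqrt{2E(t_0)}$, so $(x(t),\dot x(t))$ stays in a bounded subset of $\cH\times\cH$ on every finite sub-interval of $[t_0,T_{\max}[$. The blow-up alternative of Cauchy--Lipschitz then forces $T_{\max}=+\infty$. The main delicate point is the choice of $E$: a naive mechanical energy $\demi\|\dot x\|^2 + f(x)-\min f$ leaves a residual $\e\langle x,\dot x\rangle$ coming from the Tikhonov forcing, with no favourable sign; the extra summand $\frac{\e(t)}{2}\|x\|^2$ is exactly what kills this cross-term, and the monotonicity of $\e$ turns its own time derivative into a second favourable contribution. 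No coercivity of $f$ is required, because $E$ already controls $\|\dot x\|$ directly and then $\|x\|$ is controlled by integration.
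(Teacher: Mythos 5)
Your proposal is correct and follows essentially the same route as the paper: the same first-order (Hamiltonian) reformulation with Cauchy--Lipschitz for local well-posedness, the same energy functional $\demi\|\dot x\|^2+f(x)+\frac{\e(t)}{2}\|x\|^2$ (up to the additive constant $\min_{\cH}f$) whose monotone decay under $(H_3)$ bounds $\|\dot x\|$ and hence $\|x\|$ on finite intervals, and the same non-extendability argument to conclude $T_{\max}=+\infty$.
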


\begin{proof} The proof  relies on the combination of the Cauchy-Lipschitz theorem with energy estimates.
First consider the Hamiltonian formulation of  (\ref{DynSys-1}) as the first order system
\begin{align}\label{DynSys-2}
\begin{cases}
\dot{x}(t) - y(t) =0 \vspace{1mm} \\
\dot{y}(t) + \d\sqrt{\e(t)} y(t) + \nabla f\left(x(t) \right)+\e (t) x(t)=0 \vspace{1mm}\\
x(t_0) = x_0, \,
y(t_0) = v_0.
\end{cases}
\end{align}
According to the hypothesis $(H_1), (H_2), (H_3)$, and by applying the Cauchy-Lipschitz theorem in the locally Lipschitz case, we obtain the existence and uniqueness of a local solution.
Then, in order to pass from a local solution to a global solution, we rely on the  energy estimate obtained by taking the scalar product of (TRIGS) with $\dot{x}(t)$. It gives
$$
\frac{d}{dt} \Big( \demi \| \dot{x}(t)\|^2 + f(x(t)) + \demi \e (t) \|x(t)\|^2 )  \Big)
+ \d\sqrt{\e(t)} \| \dot{x}(t)\|^2 - \demi \dot{\e} (t) \|x(t)\|^2 =0.
$$
From $(H_3)$, $\e(\cdot)$ is non-increasing. Therefore, the energy function $t \mapsto W(t)$ is decreasing where
$$
W(t):= \demi \| \dot{x}(t)\|^2 + f(x(t)) + \demi \e (t) \|x(t)\|^2 .
$$
The end of the proof follows a standard argument. Take a maximal solution defined on an interval $[t_0, T[$. If $T$ is infinite, the
proof is over. Otherwise, if $T$ is finite, according to the above energy estimate, we have that $\| \dot{x}(t)\|$ remains bounded, just like $\| x(t)\|$ and $\| \ddot{x}(t)\|$ (use (TRIGS)). Therefore, the limit of $x(t)$ and  $\dot{x}(t)$ exists when $t \to T$. Applying the local existence result  at $T$ with the  initial conditions thus obtained gives a contradiction to the maximality of the solution.
\end{proof}

\subsection{General case}
The control of the decay of $\e(t)$ to zero as $t \to +\infty$ will play a key role in the  Lyapunov analysis of (TRIGS).
Precisely, we will use the following condition.

\begin{definition}
Given $\delta >0$, we say that $t \mapsto \e(t)$ satisfies the controlled decay property ${\rm(CD)}_K$, if it is a nonincreasing function which satisfies: there exists $t_1\ge t_0$ such that for all $t\ge t_1,$
$$\left(\frac{1}{\sqrt{\e(t)}}\right)'\le \min(2K-\d, \d-K),$$ where $K$ is a  parameter such that $ \frac{\d}{2} < K  < \delta$ for $0<\delta \leq 2$,  and $ \frac{\d+ \sqrt{\delta^2 -4}}{2} < K  < \delta$ for $\delta > 2$ .
\end{definition}

\begin{theorem}\label{RateConvergenceResult-a}
Let $x : [t_0, +\infty[ \to \mathcal{H}$ be a solution trajectory of  {\rm(TRIGS)}. Let $\delta$ be a positive parameter.
Suppose that $\e(\cdot)$ satisfies the condition ${\rm(CD)}_K$ for some $K>0$.
Then,  we have the following rate of convergence of values: for all $t\ge t_1$
\begin{equation}\label{basic-Lyap}
f(x(t)) - \min_{\cH} f \le \frac{K\|x^*\|^2}{2}\frac{1}{\mathfrak{M}(t)}\int_{t_1}^t \e^{\frac32}(s)\mathfrak{M}(s)ds+\frac{C}{\mathfrak{M}(t)},
\end{equation}
where
\begin{equation*}
\mathfrak{M}(t)=\exp \left({\displaystyle{\int_{t_1}^t\mu(s)ds}}\right), \quad
\mu(t) =-\frac{\dot{\e}(t)}{2\e(t)}+ (\delta-K)\sqrt{\e(t)}
\end{equation*}
and $$C=\left( f(x(t_1)) - f(x^\ast) \right) +\frac{\e(t_1)}{2}\|x(t_1)\|^2+ \frac{1}{2} \| K\sqrt{\e(t_1)}(x(t_1)-x^\ast) + \dot{x}(t_1) \|^2.$$
\end{theorem}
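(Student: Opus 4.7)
The only natural Lyapunov candidate matches the constant $C$ of the statement:
\begin{equation*}
\cE(t) := \bigl(f(x(t))-f(x^\ast)\bigr) + \frac{\e(t)}{2}\|x(t)\|^2 + \frac{1}{2}\bigl\|K\sqrt{\e(t)}\,(x(t)-x^\ast)+\dot x(t)\bigr\|^2,
\end{equation*}
so that $\cE(t_1)=C$ and $\cE(t)\ge f(x(t))-f(x^\ast)$. I will prove the differential inequality
\begin{equation*}
\dot\cE(t)+\mu(t)\cE(t)\le \tfrac{K\|x^\ast\|^2}{2}\e(t)^{3/2} \qquad \forall\, t\ge t_1,
\end{equation*}
and then conclude with Grönwall: multiplying by $\mathfrak{M}(t)$ gives $\frac{d}{dt}(\mathfrak{M}\cE)\le \tfrac{K\|x^\ast\|^2}{2}\e^{3/2}\mathfrak{M}$, and integration between $t_1$ and $t$ yields~\eqref{basic-Lyap}.

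The core is therefore the derivation of this inequality. I compute $\dot\cE$ by differentiating each term, substituting $\ddot x=-\delta\sqrt{\e}\,\dot x-\nabla f(x)-\e x$ from (TRIGS). The $\langle\nabla f(x),\dot x\rangle$ and $\e\langle x,\dot x\rangle$ contributions cancel pairwise, and the cross term $K^2\e+K\dot\e/(2\sqrt{\e})-K\delta\e$ in front of $\langle x-x^\ast,\dot x\rangle$ equals precisely $-K\sqrt{\e}\,\mu(t)$, which is the reason for choosing $\mu$ as in the statement: it annihilates the $\langle x-x^\ast,\dot x\rangle$ coefficient when forming $\dot\cE+\mu\cE$. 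What remains after this algebra is a sum of squared-norm terms plus the convex-potential term $-K\sqrt{\e}\langle x-x^\ast,\nabla f(x)\rangle+\mu(t)\phi(t)$, where $\phi:=f(x)-f(x^\ast)$. Convexity of $f$ bounds the gradient term by $-K\sqrt{\e}\,\phi$, so this pair is dominated by $\bigl(\mu(t)-K\sqrt{\e}\bigr)\phi(t)=\bigl(-\dot\e/(2\e)+(\delta-2K)\sqrt{\e}\bigr)\phi(t)$, which is $\le 0$ by the bound $(1/\sqrt{\e})'\le 2K-\delta$ from $(\mathrm{CD})_K$. Similarly the coefficient $\tfrac{K-\delta}{2}\sqrt{\e}-\tfrac{\dot\e}{4\e}$ of $\|\dot x\|^2$ is nonpositive by the other bound $(1/\sqrt{\e})'\le \delta-K$.

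The genuine obstacle is the remaining quadratic block, namely
\begin{equation*}
\bigl[\tfrac{\dot\e}{4}+\tfrac{(\delta-K)\e^{3/2}}{2}\bigr]\|x\|^2
+\bigl[\tfrac{K^2\dot\e}{4}+\tfrac{K^2(\delta-K)\e^{3/2}}{2}\bigr]\|x-x^\ast\|^2
- K\e^{3/2}\langle x-x^\ast,x\rangle.
\end{equation*}
I reorganise it using $\|x\|^2=\|x-x^\ast\|^2+2\langle x-x^\ast,x^\ast\rangle+\|x^\ast\|^2$ and $\langle x-x^\ast,x\rangle=\|x-x^\ast\|^2+\langle x-x^\ast,x^\ast\rangle$, which turns the block into three contributions: a coefficient in front of $\|x-x^\ast\|^2$, one in front of $\langle x-x^\ast,x^\ast\rangle$, and $\bigl(\tfrac{\dot\e}{4}+\tfrac{(\delta-K)\e^{3/2}}{2}\bigr)\|x^\ast\|^2$. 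The $\|x^\ast\|^2$ term is $\le \tfrac{(\delta-K)\e^{3/2}}{2}\|x^\ast\|^2\le \tfrac{K\e^{3/2}}{2}\|x^\ast\|^2$ since $K>\delta/2$ and $\dot\e\le 0$, giving exactly half of the target right-hand side. The coefficient of $\langle x-x^\ast,x^\ast\rangle$ is $\tfrac{\dot\e}{2}+(\delta-2K)\e^{3/2}$, which is nonpositive; denoting its absolute value $\beta(t)$, the elementary bound $\beta\|x-x^\ast\|\|x^\ast\|\le \tfrac{\beta}{2}\|x-x^\ast\|^2+\tfrac{\beta}{2}\|x^\ast\|^2$ distributes half to $\|x-x^\ast\|^2$ and half to $\|x^\ast\|^2$; since $\tfrac{\beta}{2}=-\tfrac{\dot\e}{4}+\tfrac{(2K-\delta)\e^{3/2}}{2}$, the $\|x^\ast\|^2$ piece exactly completes the target $\tfrac{K\|x^\ast\|^2}{2}\e^{3/2}$.

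The hard part is then the residual coefficient of $\|x-x^\ast\|^2$. A direct computation (collecting the three contributions above) simplifies to
\begin{equation*}
\tfrac{K^2\dot\e}{4}-\tfrac{K\e^{3/2}}{2}\bigl(K^2-\delta K+1\bigr),
\end{equation*}
which is nonpositive precisely when $K^2-\delta K+1\ge 0$, given $\dot\e\le 0$. This trinomial is automatically nonnegative for $\delta\le 2$ (nonpositive discriminant), whereas for $\delta>2$ it forces $K\ge \tfrac{\delta+\sqrt{\delta^2-4}}{2}$, which is exactly the regime imposed in the hypothesis on $K$. This is the step I expect to be the most delicate, because it is what determines the admissible range of $K$ and explains the two-regime condition on $\delta$. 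Once this last coefficient is seen to be $\le 0$, all non-$\|x^\ast\|^2$ contributions to $\dot\cE+\mu\cE$ are nonpositive, the stated differential inequality holds, and Grönwall concludes.
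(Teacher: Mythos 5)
Your proposal is correct and follows essentially the same route as the paper: the identical Lyapunov function $\mathcal{E}(t)$ with $c(t)=K\sqrt{\e(t)}$, the same choice of $\mu(t)$ to annihilate the $\langle\dot x(t),x(t)-x^\ast\rangle$ coefficient, the same sign conditions (including the trinomial $K^2-\delta K+1\ge 0$ that dictates the two-regime restriction on $K$), and the same Gr\"onwall closure. The only difference is bookkeeping: the paper bounds $-c(t)\langle\nabla f(x(t))+\e(t)x(t),x(t)-x^\ast\rangle$ in one stroke via the $\e(t)$-strong convexity of $f_t$, whereas you split it into plain convexity of $f$ plus an explicit expansion of the quadratic block with Young's inequality — which yields exactly the same final coefficients for $\|x(t)-x^\ast\|^2$ and $\|x^\ast\|^2$ (your intermediate phrase about "half of the target" is slightly off, but the identity $A+\beta/2=\tfrac{K}{2}\e^{3/2}$ you invoke at the end is what matters and is correct).
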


\begin{proof}
{\bf Lyapunov analysis.} Set $f^\ast := f(x^\ast)=\min_{\cH} f$. The  energy function $\mathcal{E} : [t_0, +\infty[ \to \mathbb{R}_+,$
\begin{align}\label{Lyapunov-a}
\mathcal{E}(t):= \left( f(x(t)) - f^\ast \right) +\frac{\e(t)}{2}\|x(t)\|^2+ \frac{1}{2} \| c(t)(x(t)-x^\ast) + \dot{x}(t) \|^2,
\end{align}
will be the basis for our Lyapunov analysis.
The function $c:[t_0,+\infty[\to\R$ will be defined later, appropriately.
Let us  differentiate $\mathcal{E}(\cdot)$. By  using the derivation chain rule, we get
\begin{align}\label{engderiv1-a}
\dot{\mathcal{E}}(t) &= \<\n f(x(t)),\dot{x}(t)\>+\frac{\dot{\e}(t)}{2}\|x(t)\|^2+\e(t)\<\dot{x}(t),x(t)\>\\
\nonumber&+\<c'(t)(x(t)-x^*)+c(t)\dot{x}(t) +\ddot{x}(t),c(t)(x(t)-x^\ast) +  \dot{x}(t)\>.
\end{align}
According to the constitutive equation  (\ref{TRIGS}), we have
\begin{align}\label{SecondOrderDeriv-a}
\ddot{x}(t) = - \e(t) x(t)-\d\sqrt{\e(t)} \dot{x}(t) - \nabla f(x(t)).
\end{align}
Therefore,
\begin{eqnarray}\label{forenergy2-a}
&&\<c'(t)(x(t)-x^*)+c(t)\dot{x}(t) +\ddot{x}(t),c(t)(x(t)-x^\ast) + \dot{x}(t)\> \\
&&= \<c'(t)(x(t)-x^*)+(c(t)-\d\sqrt{\e(t)})\dot{x}(t) -(\e(t)x(t)+\nabla f(x(t))), c(t)(x(t)-x^\ast) +  \dot{x}(t)\> \nonumber\\
&& =c'(t)c(t)\|x(t)-x^*\|^2+(c'(t)+c^2(t)-\d c(t)\sqrt{\e(t)})\<\dot{x}(t),x(t)-x^*\>+(c(t)-\d\sqrt{\e(t)})\|\dot{x}(t)\|^2 \nonumber\\
&&-\e(t)\<x(t),\dot{x}(t)\> -\<\nabla f(x(t)), \dot{x}(t)\>-c(t)\<\e(t)x(t)+\nabla f(x(t)), x(t)-x^\ast\>. \nonumber
\end{eqnarray}
By combining \eqref{engderiv1-a} with \eqref{forenergy2-a}, we get
\begin{align}\label{engderiv2-a}
\dot{\mathcal{E}}(t) =&\frac{\dot{\e}(t)}{2}\|x(t)\|^2+c'(t)c(t)\|x(t)-x^*\|^2+(c'(t)+c^2(t)-\d c(t)\sqrt{\e(t)})\<\dot{x}(t),x(t)-x^*\>\\
\nonumber& +(c(t)-\d\sqrt{\e(t)})\|\dot{x}(t)\|^2-c(t)\<\e(t)x(t)+\nabla f(x(t)), x(t)-x^\ast\> .
\end{align}
Consider  the function
$$f_t:\mathcal{H}\To\R,\,f_t(x)=f(x)+\frac{\e(t)}{2}\|x\|^2.$$
According to the strong convexity property of $f_t$, we have
$$f_t(y)-f_t(x)\ge\<\n f_t(x),y-x\>+\frac{\e(t)}{2}\|x-y\|^2,\mbox{ for all }x,y\in\mathcal{H}.$$
Take  $y=x^*$ and $x=x(t)$ in the above inequality. We get
\begin{align*}
&f(x^*)+\frac{\e(t)}{2}\|x^*\|^2-f(x(t))-\frac{\e(t)}{2}\|x(t)\|^2\ge\\
&-\<\n f(x(t))+\e(t)(x(t),x(t)-x^*\>+\frac{\e(t)}{2}\|x(t)-x^*\|^2.
\end{align*}
Consequently,
\begin{eqnarray}
-\<\n f(x(t))+\e(t)x(t),x(t)-x^*\> &\leq & -(f(x(t))-f(x^*)) \nonumber \\
&+&\frac{\e(t)}{2}\|x^*\|^2-\frac{\e(t)}{2}\|x(t)\|^2-\frac{\e(t)}{2}\|x(t)-x^*\|^2.\label{forenergy4-a}
\end{eqnarray}
By multiplying \eqref{forenergy4-a} with $c(t)$ and injecting in \eqref{engderiv2-a} we get
\begin{align}\label{engderiv3-a}
\dot{\mathcal{E}}(t)\le&-c(t)(f(x(t))-f^*)+\left(\frac{\dot{\e}(t)}{2}-c(t)\frac{\e(t)}{2}\right)\|x(t)\|^2\\
 \nonumber&+\left(c'(t)c(t)-c(t)\frac{\e(t)}{2}\right)\|x(t)-x^*\|^2+(c(t)-\d\sqrt{\e(t)})\|\dot{x}(t)\|^2\\
 \nonumber&+(c'(t)+c^2(t)-\d c(t)\sqrt{\e(t)})\<\dot{x}(t),x(t)-x^*\>+c(t)\frac{\e(t)}{2}\|x^*\|^2.
\end{align}
On the other hand, for a positive function $\mu(t)$ we have
\begin{align}\label{energ1-a}
\mu(t)\mathcal{E}(t)=&\mu(t) \left( f(x(t)) - f^\ast \right) +\mu(t)\frac{\e(t)}{2}\|x(t)\|^2+ \frac{1}{2}\mu(t)c^2(t) \|x(t)-x^\ast\|^2 +\frac{1}{2}\mu(t) \| \dot{x}(t) \|^2\\
\nonumber &+ \mu(t)c(t)\<\dot{x}(t),x(t)-x^*\>.
\end{align}
By adding \eqref{engderiv3-a} and \eqref{energ1-a} we get
\begin{align}\label{forgronwal1}
\dot{\mathcal{E}}(t)+\mu(t)\mathcal{E}(t)\le &(\mu(t)-c(t))( f(x(t)) - f^\ast)+\left(\frac{\dot{\e}(t)}{2}-c(t)\frac{\e(t)}{2}+\mu(t)\frac{\e(t)}{2}\right)\|x(t)\|^2\\
\nonumber& +\left(c'(t)c(t)-c(t)\frac{\e(t)}{2}+\frac{1}{2}\mu(t)c^2(t)\right)\|x(t)-x^*\|^2\\
\nonumber&+\left(c(t)-\d\sqrt{\e(t)}+\frac{1}{2}\mu(t)\right)\|\dot{x}(t)\|^2\\
 \nonumber&+\left(c'(t)+c^2(t)-\d c(t)\sqrt{\e(t)}+ \mu(t)c(t)\right)\<\dot{x}(t),x(t)-x^*\>+c(t)\frac{\e(t)}{2}\|x^*\|^2.
\end{align}
Since we have no control on the sign of $\<\dot{x}(t),x(t)-x^*\>$, we take the coefficient in front of this term equal to zero, that is
\begin{equation}\label{def:c}
c'(t)+c^2(t)-\d c(t)\sqrt{\e(t)}+ \mu(t)c(t) =0.
\end{equation}
Take  $c(t)=K\sqrt{\e(t)}$. Indeed, it is here that the choice of $c$, and of the corresponding parameter $K$,  come into play.  The relation (\ref{def:c}) can be equivalently written
$$\mu(t) =-\frac{\dot{\e}(t)}{2\e(t)}+ (\delta-K)\sqrt{\e(t)}.$$
According to this choice for $\mu(t)$ and $c(t)$, the inequality
(\ref{forgronwal1}) becomes
\begin{align}\label{forgronwal1-b}
\dot{\mathcal{E}}(t)+\mu(t)\mathcal{E}(t)\le &\frac{1}{2\e(t)}\left(-\dot{\e}(t)+2(\d-2K){\e(t)}^{\frac32}\right)( f(x(t)) - f^\ast)\\
\nonumber&+\frac{1}{4}\left(  \dot{\e}(t) +  2\left( \delta -2K \right)   \e(t)^{\frac32}\right)\|x(t)\|^2\\
\nonumber& +\frac{K}{4}\left( K\dot{\e}(t)  +2\e(t)^{\frac32}(-K^2+ \delta K -1) \right)\|x(t)-x^*\|^2\\
\nonumber&+ \frac{1}{4\e(t)} \left( -\dot{\e}(t)+2(K- \delta){\e(t)}^{\frac32}   \right)\|\dot{x}(t)\|^2  +\frac{K\|x^*\|^2}{2}\e^{\frac32}(t).
\end{align}
Let us show that  the condition ${\rm(CD)}_K$ provide the  nonpositive sign for the coefficients in front of the  terms of the right side of (\ref{forgronwal1-b}).
Recall that, according to the hypotheses ${\rm(CD)}_K$, for all $t\ge t_1$ we have  the  properties \textit{a)} and \textit{b)}:
\begin{eqnarray*}
&& a) \; \left(\frac{1}{\sqrt{\e(t)}}\right)'\le M_1 (K)= \min(2K-\d, \d-K)=\left\{\begin{array}{ll}2K-\d\mbox{ if } K\le \frac23 \d \\\d-K,\mbox{ if }\frac23 \d\le K,\end{array}\right. \\
&& b) \; \left(\frac{1}{\sqrt{\e(t)}}\right)'\ge0.
\end{eqnarray*}
Without ambiguity we write briefly $M_1$ for $M_1 (K)$.
Note that $b)$ just expresses that $\e(\cdot)$ is non increasing.
According to the hypotheses ${\rm(CD)}_K$, we claim that
for all $t\ge t_1$
\begin{equation}\label{difineqsysforc1}
\left\{
\begin{array}{llll}
i) \; \left(\frac{1}{\sqrt{\e(t)}}\right)'\le 2K-\d\\
ii) \; \left(\frac{1}{\sqrt{\e(t)}}\right)'\ge \frac{\d K-K^2-1}{K}
\\
iii) \; \left(\frac{1}{\sqrt{\e(t)}}\right)'\le \d-K.
\end{array}\right.
\end{equation}
Let us justify  these inequalities  (\ref{difineqsysforc1}).\\
$i)$  is a consequence of $\left(\frac{1}{\sqrt{\e(t)}}\right)'\le M_1$ and $M_1\leq 2K-\delta$.\\
$ii)$  is a consequence of $\left(\frac{1}{\sqrt{\e(t)}}\right)'\ge0$ and  $\d K-K^2-1 \leq 0$. Precisely, when  $\delta \leq 2$ we have  $\d K-K^2-1  \leq 2 K-K^2-1 \leq 0$. When $\delta > 2$, we have   $\d K-K^2-1 \leq 0$ because  $K \geq \frac{\d+ \sqrt{\delta^2 -4}}{2} $.\\
$iii)$  is a consequence of $\left(\frac{1}{\sqrt{\e(t)}}\right)'\le M_1$ and $M_1\leq \delta -K$.

\smallskip

\noindent The inequalities (\ref{difineqsysforc1}) can be equivalently written as follows:   for all $t\ge t_1$
\begin{equation}\label{difineqsysforc}
\left\{
\begin{array}{llll}
i) \; -\dot{\e}(t)+2(\d-2K){\e(t)}^{\frac32}\le 0 \vspace{2mm}\\
ii) \; K\dot{\e}(t)+2(\d K-K^2-1){\e(t)}^{\frac32}\le 0\vspace{2mm}\\
iii) \; -\dot{\e}(t)+2(K-\d){\e(t)}^{\frac32}\le 0.\vspace{2mm}
\end{array}\right.
\end{equation}
The inequalities (\ref{difineqsysforc})  give that the coefficients
entering the right side of (\ref{forgronwal1-b}) are  nonpositive:

\smallskip

\noindent $\bullet$ $i)$ gives that the coefficient of  $f(x(t)) - f^\ast$  is nonpositive.  \\
$\bullet$  Since $\dot{\e}(t)\le 0$ we have $\dot{\e}(t)+2(\d -2K){\e(t)}^{\frac32}\le  -\dot{\e}(t)+2(\d -2K){\e(t)}^{\frac32}$. Therefore, by $i)$
we have
that the coefficient of $\|x(t)\|^2$ in \eqref{forgronwal1-b} is  nonpositive.\\
$\bullet$ $ii)$ gives that the coefficient of $\|x(t)-x^*\|^2$  is nonpositive.\\
$\bullet$ $iii)$  gives that the coefficient of $\|\dot{x}(t)\|^2 $
is  nonpositive .

\smallskip

\noindent Let us return to (\ref{forgronwal1-b}). Using (\ref{difineqsysforc}) and the above results, we obtain
\begin{equation}\label{Basic1}
\dot{\mathcal{E}}(t)+\mu(t)\mathcal{E}(t)\le\frac{K\|x^*\|^2}{2}\e^{\frac32}(t),\mbox{ for all }t\ge t_1.
\end{equation}
By multiplying \eqref{Basic1} with $\mathfrak{M}(t)= \exp \left({\displaystyle{\int_{t_1}^t\mu(s)ds}}\right)$  we obtain
\begin{equation}\label{forgronwal2}
\frac{d}{dt}\left(\mathfrak{M}(t)\mathcal{E}(t)\right)\le \frac{K\|x^*\|^2}{2}\e^{\frac32}(t)\mathfrak{M}(t).
\end{equation}
By integrating \eqref{forgronwal2} on $[t_1,t]$ we get
\begin{equation}\label{energyrate}
\mathcal{E}(t)\le \frac{K\|x^*\|^2}{2}\frac{\int_{t_1}^t \e^{\frac32}(s)\mathfrak{M}(s)ds}{\mathfrak{M}(t)}+\frac{\mathfrak{M}(t_1)\mathcal{E}(t_1)}{\mathfrak{M}(t)}.
\end{equation}
By definition of $\mathcal{E}(t)$ we deduce that
\begin{equation}\label{valuerate}
f(x(t)) - \min_{\cH} f \le \frac{K\|x^*\|^2}{2}\frac{\int_{t_1}^t \e^{\frac32}(s)\mathfrak{M}(s)ds}{\mathfrak{M}(t)}+\frac{\mathcal{E}(t_1)}{\mathfrak{M}(t)},
\end{equation}
for all $t\ge t_1$, and this gives  the  convergence rate of the values.
\end{proof}
\begin{remark}\label{formepsilon}
By integrating the  relation $0\le\left(\frac{1}{\sqrt{\e(t)}}\right)'\le M_1$ on an interval $[t_1, t]$, we get
$$\frac{1}{\sqrt{\e(t_1)}}\le \frac{1}{\sqrt{\e(t)}}\le M_1 t+\frac{1}{\sqrt{\e(t_1)}}-M_1 t_1 .$$
Therefore, denoting $C_1=\frac{1}{\sqrt{\e(t_1)}}-M_1 t_1$, and $C_2=\e(t_1)$ we have
\begin{equation}\label{formeps}
\frac{1}{(M_1 t+C_1)^2} \leq \e(t) \leq C_2.
\end{equation}
This shows that the Lyapunov analysis developed previously only provides information  in the case where
   $ \e (t) $ is greater than or equal to $ C / t^2 $.  Since the damping coefficient  $\gamma(t) = \d\sqrt{\e(t)}$, this means that $\gamma (t)$ must be greater than or equal to $C/t$.
 This is in accordance with the theory of inertial gradient systems with time-dependent viscosity coefficient, which states that the asymptotic optimization property is valid provided that the integral on $[t_0, +\infty[$ of $\gamma(t)$ is infinite,  see \cite{AC10}.
\end{remark}

As a consequence of Theorem \ref{RateConvergenceResult-a} we have  the following result.

\begin{corollary}\label{gen_cor_1}
Under the hypothesis of Theorem  \ref{RateConvergenceResult-a} we have
\begin{equation}\label{rem_min_1}
\lim_{t\to+\infty}\mathfrak{M}(t)=+\infty.
\end{equation}
Suppose moreover  that $\e^{\frac32}(\cdot)\in L^1(t_0,+\infty)$. Then
\begin{equation}\label{rem_min_2}
\lim_{t\to+\infty}f(x(t))=\min_{\cH} f.
\end{equation}
\end{corollary}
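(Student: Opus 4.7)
\emph{Proof plan.} The strategy is to obtain a closed-form expression for $\mathfrak{M}(t)$, show it diverges, and then control the ratio appearing in (\ref{basic-Lyap}) by L'H\^opital's rule.

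First I would decompose
$$\mu(t)=\left(-\tfrac{1}{2}\ln\e(t)\right)'+(\delta-K)\sqrt{\e(t)},$$
integrate on $[t_1,t]$, and obtain
$$\mathfrak{M}(t)=\sqrt{\frac{\e(t_1)}{\e(t)}}\,\exp\!\left((\delta-K)\int_{t_1}^{t}\!\sqrt{\e(s)}\,ds\right).$$
Since $\e$ is nonincreasing the first factor is $\ge 1$. From Remark \ref{formepsilon} we have $\sqrt{\e(s)}\ge (M_1 s+C_1)^{-1}$; moreover $M_1>0$ for every admissible $K$ in ${\rm(CD)}_K$, so $\int_{t_1}^{t}\sqrt{\e(s)}\,ds$ diverges at least logarithmically. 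Combined with $\delta-K>0$, this yields $\mathfrak{M}(t)\to+\infty$, which is (\ref{rem_min_1}).

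For (\ref{rem_min_2}) I would plug (\ref{rem_min_1}) into (\ref{basic-Lyap}). The constant term $C/\mathfrak{M}(t)$ vanishes. For the remaining ratio
$$R(t):=\frac{1}{\mathfrak{M}(t)}\int_{t_1}^{t}\e^{\frac32}(s)\mathfrak{M}(s)\,ds,$$
I would split into two cases. If $\int_{t_1}^{+\infty}\e^{\frac32}(s)\mathfrak{M}(s)\,ds<+\infty$, the numerator is bounded while the denominator diverges, so $R(t)\to 0$. Otherwise we are in the $\infty/\infty$ setting; using $\mathfrak{M}'(t)=\mu(t)\mathfrak{M}(t)>0$ and the bound $\mu(t)\ge(\delta-K)\sqrt{\e(t)}$, which follows from $\dot\e\le 0$, L'H\^opital's rule gives
$$\lim_{t\to+\infty}R(t)=\lim_{t\to+\infty}\frac{\e^{\frac32}(t)}{\mu(t)}\le\lim_{t\to+\infty}\frac{\e(t)}{\delta-K}=0.$$
In either case the right-hand side of (\ref{basic-Lyap}) tends to $0$, and since $f(x(t))\ge\min_\cH f$ this gives (\ref{rem_min_2}).

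The main subtlety is the dichotomy in the L'H\^opital step: one must verify that $\mathfrak{M}$ is eventually strictly increasing (immediate from $\mu>0$) and handle separately the case when the numerator converges, so that no hypothesis beyond ${\rm(CD)}_K$ and $\e^{3/2}\in L^1$ is silently invoked. Everything else reduces to routine integration together with the elementary lower bound $\mu\ge(\delta-K)\sqrt{\e}$.
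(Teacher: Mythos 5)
Your argument is correct, and while the first half mirrors the paper, the second half takes a genuinely different route. For \eqref{rem_min_1} you both integrate $\mu$ to get $\mathfrak{M}(t)=\sqrt{\e(t_1)/\e(t)}\,\exp\bigl((\delta-K)\int_{t_1}^t\sqrt{\e(s)}\,ds\bigr)$; the paper then simply discards the exponential (which is $\ge 1$ since $\delta\ge K$) and concludes from $\mathfrak{M}(t)\ge C/\sqrt{\e(t)}\to+\infty$ because $\e(t)\to 0$, whereas you keep the first factor $\ge 1$ and make the exponential diverge via the lower bound $\sqrt{\e(s)}\ge (M_1s+C_1)^{-1}$ of Remark \ref{formepsilon} — both work, the paper's being marginally more economical since it needs only $(H_3)$. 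For \eqref{rem_min_2} the paper outsources the key step to \cite[Lemma A.3]{ACR}, which asserts that $\e^{3/2}\in L^1$ forces $\mathfrak{M}(t)^{-1}\int_{t_1}^t\e^{3/2}(s)\mathfrak{M}(s)\,ds\to 0$; you instead give a self-contained L'H\^opital argument, and your bound
\begin{equation*}
0\le \frac{\e^{\frac32}(t)}{\mu(t)}\le \frac{\e^{\frac32}(t)}{(\delta-K)\sqrt{\e(t)}}=\frac{\e(t)}{\delta-K}\longrightarrow 0
\end{equation*}
shows the derivative ratio has limit $0$, so L'H\^opital legitimately applies (your dichotomy correctly covers the case where the numerator stays bounded). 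Interestingly, your argument never actually uses the hypothesis $\e^{3/2}\in L^1$: the squeeze above needs only $\e(t)\to 0$ and $\delta>K$, so you in fact prove a slightly stronger statement for this particular $\mathfrak{M}$. The only cosmetic point is that writing $\lim R(t)=\lim \e^{3/2}/\mu$ before establishing that the latter limit exists inverts the logical order of L'H\^opital; since the squeeze immediately gives that the limit exists and equals $0$, this is presentational rather than a gap.
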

\begin{proof}
By definition of $\mu(t)$, since $\e(\cdot)$ is nonincreasing and $ \delta \geq K$, we have that
$\mu (t)$ is nonnegative for all $t\geq t_1$. Therefore,
$t \mapsto \mathfrak{M}(t)$ is a nondecreasing function. Let us write
equivalently $\mu(t)=\frac{d}{dt}\ln \frac{1}{\sqrt{\e(t)}}+ (\delta-K)\sqrt{\e(t)}$, and integrate  on $[t_1,t]$. We obtain
$$\mathfrak{M}(t)= \exp \left({\displaystyle{\int_{t_1}^t\mu(s)ds}}\right)=\frac{C}{\sqrt{\e(t)}}\exp\left(\int_{t_1}^t (\delta-K)\sqrt{\e(s)}ds\right).$$
Since $ \delta -K\geq 0$, we deduce that  $\mathfrak{M}(t) \geq \frac{C}{\sqrt{\e(t)}}$. Since $\lim_{t \to \infty}\e (t) =0$,
we get
\begin{equation*}
\lim_{t\to+\infty}\mathfrak{M}(t)=+\infty.
\end{equation*}
Moreover, if we suppose that $\e^{\frac32}(\cdot)\in L^1(t_0,+\infty)$, then  by \cite[Lemma A.3]{ACR} we obtain
\begin{equation*}
\lim_{t\to+\infty} \frac{\int_{t_1}^t \e^{\frac32}(s)\mathfrak{M}(s)ds}{\mathfrak{M}(t)}=0.
\end{equation*}
Combining these properties with the convergence rate (\ref{basic-Lyap}) of Theorem \ref{RateConvergenceResult-a}, we obtain (\ref{rem_min_2}).
\end{proof}

\subsection{Particular cases}

Since $\e (t) \to 0$ as $t \to + \infty$,  (TRIGS)  falls within the setting of the inertial dynamics with an asymptotic vanishing damping coefficient $\gamma (t)$.  Here, $\gamma (t) = \d\sqrt{\e(t)} $. We know with Cabot-Engler-Gaddat \cite{CEG} that for such systems,  the optimization property is  satisfied asymptotically if $\int_{t_0}^{+\infty} \gamma(t) dt =+\infty$ (\ie $\gamma (t)$ does no tend too rapidly towards zero). By taking $\e (t) = \frac{c}{t^p}$, it is easy to verify
that the condition ${\rm(CD)}_K$ is satisfied if $p \leq 2$, that is $\sqrt{\e(t)} = \frac{c}{t^p}$, with $p\leq 1$,    which is in accordance with the above property.
Let us  particularize  Theorem  \ref{RateConvergenceResult-a} to situations where the integrals can be computed (at least estimated).

\subsubsection{$\e(t) $ of order $1/t^2$}
Take
$$\e(t)=\frac{1}{(M t+C)^2},\, \; M < M_1 (K),\ \; C\le C_1 .$$
 Then, $\left(\frac{1}{\sqrt{\e(t)}}\right)'\le M_1 (K)$ for all $t\ge t_0$ and the condition ${\rm(CD)}_K$  is satisfied. Moreover,
 $$\mu(t)=\frac{M+\d-K}{Mt+C}, \quad
\mathfrak{M}(t)=\left(\frac{Mt+C}{Mt_0+C}\right)^{\frac{M+\d-K}{M}}.
$$
Therefore, \eqref{basic-Lyap}  becomes
\begin{equation}\label{energyrate1}
\mathcal{E}(t)\le \frac{K\|x^*\|^2}{2}\frac{\displaystyle{\int_{t_0}^t {(Ms+C)}^{\frac{-2M+\d-K}{M}}ds}}{{(Mt+C)}^{\frac{M+\d-K}{M}}}+\frac{(Mt_0+C)^{\frac{M+\d-K}{M}}\mathcal{E}(t_0)}{(Mt+C)^{\frac{M+\d-K}{M}}}.
\end{equation}
Consequently, we have
\begin{equation*}
\mathcal{E}(t)\le \frac{K\|x^*\|^2}{2(-M+\d-K)}\frac{1}{{(Mt+C)}^2}+\frac{-\frac{K\|x^*\|^2}{2(-M+\d-K)}(Mt_0+C)^{\frac{-M+\d-K}{M}}+(Mt_0+C)^{\frac{M+\d-K}{M}}\mathcal{E}(t_0)}{(Mt+C)^{\frac{M+\d-K}{M}}}.
\end{equation*}
By assumption we have $M < M_1 \leq \delta -K$. Therefore $\frac{M+\d-K}{M} > 2$ and $-M +\delta -K > 0$. It follows that when $Mt +C\geq 1$
$$\mathcal{E}(t)\le \frac{C'}{(Mt+C)^2}, \;
 \mbox{ with } \; \displaystyle{C'=\frac{K\|x^*\|^2}{2(-M+\d-K)}+(Mt_0+C)^{\frac{M+\d-K}{M}}\mathcal{E}(t_0)}.$$
\noindent Observe that $\displaystyle{\d\sqrt{\e(t)}=\frac{\frac{\d}{M}}{t+\frac{C}{M}}=\frac{\a}{t+\b}}$, where we set
$\alpha = \frac{\d}{M}$ and $\beta =\frac{C}{M}$. Since  $M<  M_1\le\frac13 \d$ we get  $\a\in\left]3,+\infty \right[$. Indeed, we can get any $\alpha > 3$.
Note also that by translating the time scale the result in the general case $ \beta \geq 0 $ results from its obtaining for a particular case $ \beta=0$.
According to the fact that we can take for $\delta$ any positive number, we obtain

\begin{theorem}\label{RateConvergenceResult-b}
Take $\a\in\left]3,+\infty\right[,$ $c >0$.
Let $x : [t_0, +\infty[ \to \mathcal{H}$ be a  solution trajectory of
$$
\ddot{x}(t) +  \frac{\a}{t} \dot{x}(t) + \nabla f\left(x(t) \right)+\frac{c}{t^2}x(t)=0.
$$
Then, the following convergence rate of the values is satisfied: as $t\to +\infty$
$$
f(x(t))-\min_{\cH} f=O\left(\frac{1}{t^2}\right).
$$
\end{theorem}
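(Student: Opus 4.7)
The plan is to specialize Theorem~\ref{RateConvergenceResult-a} to the setting of Theorem~\ref{RateConvergenceResult-b} by matching parameters, following the paragraph immediately preceding the theorem. I would parametrize $\e(t) = 1/(Mt+C)^2$ with $M = 1/\sqrt{c}$ and $C = 0$, giving $\e(t) = c/t^2$ exactly, and set $\d = \alpha/\sqrt{c}$ so that $\d\sqrt{\e(t)} = \alpha/t$. This forces the identification $\alpha = \d/M$ highlighted in the paper; the more general shifted form $\alpha/(t+\beta)$ arising from $C>0$ is recovered by a trivial time translation, as the author already remarks.

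The next step is to verify the controlled decay property ${\rm(CD)}_K$. Since $1/\sqrt{\e(t)} = t/\sqrt{c}$ is linear, $(1/\sqrt{\e(t)})' \equiv M = 1/\sqrt{c}$, so I need to find $K$ with $1/\sqrt{c} \le \min(2K-\d,\,\d-K)$, equivalently $K \in \bigl[(\alpha+1)/(2\sqrt{c}),\,(\alpha-1)/\sqrt{c}\bigr]$. This interval is non-empty precisely when $\alpha > 3$, which is the hypothesis. The canonical choice $K = 2\d/3$ (maximizer of $\min(2K-\d,\,\d-K)$) gives $M_1(K) = \d/3$ and works directly when $\d \le 2$; in the complementary regime $\d > 2$ one must additionally satisfy $K > (\d+\sqrt{\d^2-4})/2$, which restricts but does not eliminate the admissible range.

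With such a $K$ fixed, Theorem~\ref{RateConvergenceResult-a} delivers the bound \eqref{basic-Lyap}, and only a closed-form evaluation remains. Because $\e$ is a pure power, $\mu(t) = p/t$ with $p := 1 + (\d-K)\sqrt{c}$, and hence $\mathfrak{M}(t) = (t/t_1)^p$. Choosing $K$ strictly less than $(\alpha-1)/\sqrt{c}$, which is possible precisely because $\alpha > 3$, yields $p > 2$. The integral $\int_{t_1}^t \e^{3/2}(s)\mathfrak{M}(s)\,ds$ reduces to $c^{3/2}\,t_1^{-p}\int_{t_1}^t s^{p-3}\,ds$, of order $t^{p-2}$, and division by $\mathfrak{M}(t) \sim t^p$ gives a term of order $1/t^2$. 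The initial-data term $C/\mathfrak{M}(t) = O(1/t^p) = o(1/t^2)$ is negligible. Combining, $f(x(t))-\min_\cH f = O(1/t^2)$, matching the display $\mathcal{E}(t)\le C'/(Mt+C)^2$ stated just above the theorem.

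The one step that needs real care — and is where I expect the main friction — is confirming that an admissible $K$ exists when $\d > 2$, \ie when $c < \alpha^2/4$. Compatibility of the upper bound $(\alpha-1)/\sqrt{c}$ with the tightened lower bound $(\d+\sqrt{\d^2-4})/2$ reduces to $\sqrt{\alpha^2-4c} < \alpha-2$, which has to be handled separately in the range of small $c$. Everything else is an entirely routine computation of elementary integrals, and the rate falls out immediately from the structure of Theorem~\ref{RateConvergenceResult-a}.
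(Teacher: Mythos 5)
Your reduction to Theorem~\ref{RateConvergenceResult-a} is exactly the route the paper takes in the paragraph preceding the statement, and your computations of $\mu$, $\mathfrak{M}$ and the integral are correct (with $M=1/\sqrt{c}$, $\d=\a/\sqrt{c}$, $p=1+(\d-K)\sqrt{c}>2$). The problem is the point you flag and then leave open: the existence of an admissible $K$ when $\d=\a/\sqrt{c}>2$. The compatibility condition $\sqrt{\a^2-4c}<\a-2$ that you derive is equivalent to $c>\a-1$, so it \emph{fails} for every $c\in\,]0,\a-1]$; and since $\a-1<\a^2/4$ for $\a\neq 2$, this whole range does lie in the regime $\d>2$ where the tightened lower bound applies. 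In that regime the interval $\bigl]\tfrac{\d+\sqrt{\d^2-4}}{2},\d\bigr[$ required by ${\rm(CD)}_K$ has empty intersection with $\bigl[\tfrac{\a+1}{2\sqrt{c}},\tfrac{\a-1}{\sqrt{c}}\bigr]$; for instance $\a=4$, $c=1$ is already outside the scope of your argument. This is not a case to ``handle separately'': the black-box application of Theorem~\ref{RateConvergenceResult-a} genuinely breaks down there, so your proof only covers $c>\a-1$.

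The constraint $K>\tfrac{\d+\sqrt{\d^2-4}}{2}$ enters the proof of Theorem~\ref{RateConvergenceResult-a} only as a sufficient condition forcing $\d K-K^2-1\le 0$, which together with $(1/\sqrt{\e})'\ge 0$ yields inequality $ii)$ of \eqref{difineqsysforc1}. For the specific choice $\e(t)=c/t^2$ one should verify $ii)$ directly: it reads $K+\tfrac1K\ge\tfrac{\a-1}{\sqrt{c}}$, i.e.\ after the rescaling $K'=K\sqrt{c}$, $(\a-1-K')K'\le c$, and this can be met for \emph{every} $c>0$ by taking $K'$ close enough to $\a-1$, since $(\a-1-K')K'\to 0$ there. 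That is precisely what the paper's self-contained treatment of the critical case does: Theorem~\ref{RateConvergenceResult} runs the Lyapunov computation directly with $b(t)=K/t$, $\tfrac{\a+1}{2}<K<\a-1$ and condition \eqref{forc}, $(\a-K-1)K^2-Kc\le 0$, thereby covering all $\a>3$ and all $c>0$. To complete your argument you must either redo the Lyapunov analysis for $\e(t)=c/t^2$ along those lines, or reformulate ${\rm(CD)}_K$ so that $ii)$ is imposed as a hypothesis on $(1/\sqrt{\e})'$ rather than derived from $\d K-K^2-1\le 0$.
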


\begin{remark}\label{Tikh_interest1}
It is an natural question to compare our dynamic ($c>0$) with
the Su-Boyd-Cand\`es  dynamic \cite{SBC} ($c=0$), which was introduced as a continuous version of the Nesterov accelerated gradient method. We obtain the  optimal  convergence rate of  values with an additional Tikhonov regularization term, which is a remarkable property.
 In fact, in the next sections we will prove that the Tikhonov term induces  strong convergence of the trajectory to the minimum norm solution.
 \end{remark}

 \subsubsection{$\e(t)$ of order $1/t^r$, $\frac{2}{3}< r<2$}
Take $\e(t)=1/t^r$, $r<2$.
Then
\begin{eqnarray*}
\mu(t)& =&-\frac{1}{2}\frac{\dot{\e}(t)}{\e(t)}+  (\d-K)\sqrt{\e(t)}\\
& =&\frac{r}{2t}+  \frac{\d-K}{ t^\frac{r}{2}}.
\end{eqnarray*}
Therefore
\begin{eqnarray*}
\mathfrak{M}(t)& =&\exp {\displaystyle{\int_{t_0}^t \left(\frac{r}{2s}+  \frac{\d-K}{ s^\frac{r}{2}}\right)ds}}= C \displaystyle{t^{ \frac{r}{2}} \exp \left(  \frac{2(\d-K)}{2-r }
t^{1-\frac{r}{2}}\right)} .
\end{eqnarray*}
Set
$$m(t):=\displaystyle{t^{ \frac{r}{2}} \exp \left(  \frac{2(\d-K)}{2-r }
t^{1-\frac{r}{2}}\right)}. $$
According to \eqref{valuerate} we have that for some $C_1>0$ 
\begin{eqnarray}\label{valuerate_p}
f(x(t)) - \min_{\cH} f &\leq &
\frac{C_1}{m(t)} \int_{t_0}^t \frac{m(s)}{s^{\frac{3r}{2}}}ds
+\frac{C_1}{m(t)} .
\end{eqnarray}
Note that according to $r<2$, $m(t)$ is an increasing function which has an exponential growth as $t\to +\infty$.  Accordingly, by the mean value theorem we have the following majorization.
\begin{eqnarray}
\frac{1}{m(t)} \int_{t_0}^t \frac{m(s)}{s^{\frac{3r}{2}}}ds
&\leq&  \frac{m(t)}{m(t)} \int_{t_0}^{t} \frac{1}{s^{\frac{3r}{2}}}ds = \mathcal O \left(  \frac{1}{t^{\frac{3r}{2}-1}}    \right).
 \label{valuerate_p2}
\end{eqnarray}

Let us summarize these results in the following statement.

\begin{theorem}\label{RateConvergenceResult-bb}
Take $\e(t)=1/t^r$, $  \frac{2}{3} <r<2$, $\delta >0$.
Let $x : [t_0, +\infty[ \to \mathcal{H}$ be a global trajectory of
$$
\ddot{x}(t) +  \frac{\d}{t^{\frac{r}{2}}}\dot{x}(t) + \nabla f\left(x(t) \right)+ \frac{1}{t^{r}} x(t)=0.
$$
Then, the following convergence rate of the values is satisfied: as $t\to +\infty$
$$
f(x(t))-\min_{\cH} f= \mathcal O \left( \displaystyle{ \frac{1}{t^{\frac{3r}{2}-1}} }   \right).
$$
\end{theorem}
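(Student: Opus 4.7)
The plan is to apply Theorem~\ref{RateConvergenceResult-a} to the particular choice $\e(t)=1/t^r$ and to asymptotically evaluate the integral on its right-hand side. Global existence of the trajectory is immediate from Theorem~\ref{Cauchy-weel-posed} since $\e$ satisfies $(H_3)$. I first verify condition ${\rm(CD)}_K$: from $1/\sqrt{\e(t)}=t^{r/2}$ one gets $(1/\sqrt{\e(t)})'=(r/2)\,t^{r/2-1}$, which is positive and tends to $0$ as $t\to+\infty$ because $r<2$. Hence for any admissible $K$ one can pick $t_1$ so that $(1/\sqrt{\e(t)})'\leq \min(2K-\d,\,\d-K)$ on $[t_1,+\infty)$. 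Direct substitution then gives
\begin{equation*}
\mu(t)=\frac{r}{2t}+\frac{\d-K}{t^{r/2}}, \qquad \mathfrak{M}(t)=C\,t^{r/2}\exp\!\left(\frac{2(\d-K)}{2-r}\,t^{1-r/2}\right),
\end{equation*}
and the crucial qualitative point is that $r<2$ makes $1-r/2>0$, so $\mathfrak{M}$ exhibits genuine super-polynomial growth.

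Setting $m(t):=t^{r/2}\exp\!\bigl((2(\d-K)/(2-r))\,t^{1-r/2}\bigr)$, Theorem~\ref{RateConvergenceResult-a} yields a bound of the form
\begin{equation*}
f(x(t))-\min_{\cH}f \leq \frac{K\|x^\ast\|^2}{2}\cdot \frac{1}{m(t)}\int_{t_1}^{t}\frac{m(s)}{s^{3r/2}}\,ds + \frac{C'}{m(t)}.
\end{equation*}
The residual term $C'/m(t)$ decays faster than any polynomial, so the task reduces to estimating the integral quotient. I would apply L'H\^opital's rule: both numerator and denominator tend to $+\infty$, and since $m'/m=\mu\sim (\d-K)/t^{r/2}$ for large $t$ (the $r/(2t)$ contribution being subdominant because $r/2<1$), one obtains
\begin{equation*}
\frac{1}{m(t)}\int_{t_1}^{t}\frac{m(s)}{s^{3r/2}}\,ds \;\sim\; \frac{1/t^{3r/2}}{\mu(t)}\;\sim\;\frac{1}{(\d-K)\,t^r}.
\end{equation*}
Since $2/3<r<2$ forces $r>3r/2-1$, this is in particular $\mathcal O(1/t^{3r/2-1})$, which establishes the claimed rate.

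The main delicate point will be precisely this asymptotic estimate of the integral. A naive bound $m(s)\leq m(t)$ on $[t_1,t]$ reduces the quotient to $\int_{t_1}^{t}s^{-3r/2}\,ds$, which for $r>2/3$ has a finite limit and therefore yields only $\mathcal O(1)$, losing all decay. One must genuinely exploit the super-polynomial growth of $m$, either through the L'H\^opital argument above, or equivalently by observing that $\frac{d}{dt}\bigl[m(t)/((\d-K)\,t^r)\bigr]$ has leading term $m(t)/t^{3r/2}$ and integrating on $[t_1,t]$ to reveal the desired cancellation.
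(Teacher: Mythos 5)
Your proposal is correct, and on the one step that actually matters it departs from --- and repairs --- the paper's own argument. The paper follows the same outline (specialize Theorem~\ref{RateConvergenceResult-a} to $\e(t)=1/t^r$, compute $\mu(t)=\frac{r}{2t}+\frac{\d-K}{t^{r/2}}$ and $\mathfrak{M}(t)=C\,m(t)$, then estimate the integral quotient), but its key majorization \eqref{valuerate_p2} is precisely the ``naive bound'' you flag: it uses $m(s)\le m(t)$ and then asserts $\int_{t_0}^t s^{-3r/2}\,ds=\mathcal O\left(t^{1-3r/2}\right)$, which fails in the stated regime $r>2/3$ because that integral converges to a positive constant; the chain as written only delivers $\mathcal O(1)$. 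Your route genuinely exploits the super-polynomial growth of $m$: from $m'/m=\mu(t)$ one computes
$\frac{d}{dt}\left[\frac{m(t)}{(\d-K)t^{r}}\right]=\frac{m(t)}{t^{3r/2}}\left(1-\frac{r}{2(\d-K)}\,t^{r/2-1}\right)$,
and since $r/2-1<0$ and $\d-K>0$ the parenthetical factor exceeds $\tfrac12$ for large $t$; integrating gives $\frac{1}{m(t)}\int_{t_1}^{t}m(s)s^{-3r/2}\,ds=\mathcal O\left(t^{-r}\right)$, which is even stronger than the claimed $\mathcal O\left(t^{-(3r/2-1)}\right)$ because $r<2$. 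One caveat on presentation: L'H\^opital applied literally to $\int_{t_1}^t m(s)s^{-3r/2}\,ds\,/\,m(t)$ only yields that the quotient tends to $0$ (the ratio of derivatives has limit $0$), so the quantitative rate must come from the integrated differential identity you sketch at the end rather than from L'H\^opital itself; with that reading your argument is complete. The remaining ingredients --- verification of ${\rm(CD)}_K$ for large $t$ from $\left(1/\sqrt{\e(t)}\right)'=(r/2)t^{r/2-1}\to 0$, and the super-polynomial decay of the residual $C'/m(t)$ --- agree with the paper and are fine.
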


\begin{remark} When $r \to 2$ the exponent $\frac{3r}{2}-1$ tends to $2$.  So there is a continuous transition in the  convergence rate.
As in Remark \ref{Tikh_interest1}
the additional Tikhonov regularization term  is expected to have a regularization effect (even better than in the case $r=2$).
In addition, the above analysis makes appear another critical value, namely $r= \frac{2}{3}$.
\end{remark}

\section{In-depth analysis in the  critical case $\e(t) = c/t^2$}
\label{sec:critical}
Let us refine  our analysis in the case where the Tikhonov regularization coefficient and the damping coefficient are respectively of order $1/t^2$ and $1/t$.
Our analysis will now take into account the coefficients $\alpha$ and $c$ in front of these terms.
So the Cauchy problem for (TRIGS) is written
\begin{align}\label{DynSys-crit}
\begin{cases}
\ddot{x}(t) + \frac{\a}{t} \dot{x}(t) + \nabla f\left(x(t) \right)+\frac{c}{t^2}x(t)=0\\
x(t_0) = x_0, \,
\dot{x}(t_0) = v_0,
\end{cases}
\end{align}
where $t_0 > 0,\,c> 0$, $(x_0,v_0) \in \mathcal{H} \times \mathcal{H},$
and $\a\ge 3$.
%
The starting time $t_0$ is taken  strictly greater than zero to take into account the fact that the functions  $\frac{c}{t^2}$ and $\frac{\a}{t}$  have singularities at $0.$  This is not a limitation of the generality of the proposed approach, since we will focus on the asymptotic behaviour of the generated trajectories.

\subsection{Convergence rate of the values}

\begin{theorem}\label{RateConvergenceResult}
Let $t_0 > 0$ and,  for some initial data $x_0,v_0\in\mathcal{H}$, let $x : [t_0, +\infty[ \to \mathcal{H}$ be the unique global solution of  \eqref{DynSys-crit}.
Then, the following results hold.

$i)$ If $\a=3$, then
$\displaystyle f\left(x(t)\right) - \min_{\cH} f =O\left(\frac{\ln t}{t^2}\right) \mbox{ as }t\to+\infty.$

$ii)$ If $\a>3$, then
$\displaystyle f\left(x(t)\right) - \min_{\cH} f =O\left(\frac{1}{t^2}\right) \mbox{ as }t\to+\infty.$ Further, the trajectory $x$ is bounded and \;
$\displaystyle \|\dot{x}(t)\| =O\left(\frac{1}{t}\right) \mbox{ as }t\to+\infty.$
\end{theorem}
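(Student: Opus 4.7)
My strategy is to construct a Nesterov-type Lyapunov function tailored to this critical regime, rather than invoking the general Theorem~\ref{RateConvergenceResult-a} directly. The reason is that in the borderline case $\a=3$ the parameter window appearing in the controlled decay hypothesis $(\mathrm{CD})_K$ collapses to a single value, and a sharper pointwise computation is needed to extract the $\ln t$ factor as well as the velocity estimate. The natural candidate is
\[
E(t) := t^2\bigl(f_t(x(t))-f^\ast\bigr) + \tfrac{1}{2}\bigl\|(\a-1)(x(t)-x^\ast)+t\dot x(t)\bigr\|^2,
\]
where $f_t(x):=f(x)+\frac{c}{2t^2}\|x\|^2$ is the time-dependent $c/t^2$-strongly convex Tikhonov regularization of $f$. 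The reason for putting $f_t$ rather than $f$ inside $E$ is structural: the Tikhonov forcing term $(c/t^2)x$ in the ODE combines with the quadratic part of $E$ so that the expression $\a\dot x+t\ddot x$ collapses exactly to $-t\nabla f_t(x(t))$, producing cancellations analogous to those in the Su--Boyd--Cand\`es analysis.

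I would then differentiate $E$ along a trajectory of \eqref{DynSys-crit}. Using $t\ddot x=-\a\dot x-t\nabla f_t(x)$ to simplify the cross term, the $t^2\langle\nabla f_t,\dot x\rangle$ contributions cancel cleanly. Applying strong convexity of $f_t$ at $x^\ast$,
\[
\langle \nabla f_t(x(t)), x(t)-x^\ast\rangle \geq f_t(x(t))-f_t(x^\ast)+\tfrac{c}{2t^2}\|x(t)-x^\ast\|^2,
\]
I expect to obtain a differential inequality of the shape
\[
\dot E(t) \leq (3-\a)\,t\,\bigl(f(x(t))-f^\ast\bigr) - \tfrac{c(\a-1)}{2t}\|x(t)\|^2 - \tfrac{c(\a-1)}{2t}\|x(t)-x^\ast\|^2 + \tfrac{c(\a-1)\|x^\ast\|^2}{2t},
\]
in which only the last term is nonnegative and is of order $1/t$.

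For part $(i)$, when $\a=3$ the first term vanishes and integration on $[t_0,t]$ yields $E(t)\leq E(t_0)+C\ln(t/t_0)$ for an explicit $C=c\|x^\ast\|^2$. Since $t^2(f(x(t))-f^\ast)\leq E(t)$ by construction, this delivers $f(x(t))-\min_{\cH}f=O(\ln t/t^2)$. For part $(ii)$, when $\a>3$ the term $(\a-3)\,t\,(f-f^\ast)$ becomes an exploitable coercive dissipation. I would either augment $E$ with a correction of the form $\tfrac{\eta}{2}\|x-x^\ast\|^2$ tuned to cancel the residual $1/t$ forcing, or couple the differential inequality with a Gr\"onwall-type argument, upgrading the bound from $E(t)=O(\ln t)$ to $E(t)=O(1)$; this yields $f(x(t))-\min_{\cH}f=O(1/t^2)$. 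Boundedness and velocity estimates are then immediate from the structure of $E$: since $f_t(x)-f^\ast\geq\frac{c}{2t^2}\|x\|^2$, the term $\frac{c}{2}\|x(t)\|^2$ is embedded in $E$ and boundedness of $\|x(t)\|$ follows; combining $\|(\a-1)(x(t)-x^\ast)+t\dot x(t)\|^2=O(1)$ with the triangle inequality then gives $\|\dot x(t)\|=O(1/t)$.

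The main obstacle I anticipate is precisely the upgrade from $O(\ln t/t^2)$ to $O(1/t^2)$ in the supercritical range $\a>3$: the dissipation $(\a-3)t(f-f^\ast)$ in $\dot E$ only provides an integral control $\int^t s(f(x(s))-f^\ast)\,ds=O(\ln t)$ and not a pointwise bound, so a carefully designed correction term in $E$ (or an auxiliary Gr\"onwall estimate absorbing the logarithmic forcing) is needed to close the loop. Once this single upgrade is handled, the remaining conclusions follow essentially by reading off the coercive contributions already present in $E$.
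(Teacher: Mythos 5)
Your part $(i)$ is correct and is essentially the paper's own argument: your $E(t)$ is exactly $t^2\mathcal{E}(t)$ for the Lyapunov function \eqref{Lyapunov-rate} with $b(t)=K/t$ and $K=\a-1=2$, and the differential inequality you announce coincides with \eqref{forgronwal1-rate} after multiplication by $t^2$. The genuine gap is in part $(ii)$, and it is precisely the step you defer. With the anchor coefficient frozen at $K=\a-1$, the forcing term $\frac{c(\a-1)\|x^\ast\|^2}{2t}$ integrates to a logarithm no matter how you exploit the dissipation: integrating your inequality yields only $E(t)=O(\ln t)$ together with $\int_{t_0}^t s\,(f(x(s))-f^\ast)\,ds=O(\ln t)$, and neither of your two suggested repairs closes this. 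Adding $\frac{\eta}{2}\|x-x^\ast\|^2$ to $E$ produces the sign-indefinite term $\eta\langle \dot x(t),x(t)-x^\ast\rangle$ in $\dot E$, which reintroduces exactly the cross term that the choice $K=\a-1$ was tuned to cancel; and a Gr\"onwall argument starting from an $O(\ln t)$ integral bound does not upgrade to a pointwise $O(1/t^2)$ estimate.

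The paper's resolution is not a correction term but a different choice of the anchor coefficient: take $K\in\left]\frac{\a+1}{2},\a-1\right[$, strictly below $\a-1$. Then the coefficient of $f(x(t))-f^\ast$ is $\frac{\a+1-2K}{t}<0$, the coefficient of $\|\dot x(t)\|^2$ is $\frac{K-\a+1}{2t}<0$, and the homogeneous decay rate becomes $\mu(t)=\frac{\a+1-K}{t}$ with exponent $\a+1-K>2$; after multiplication by the integrating factor $t^{\a+1-K}$, the forcing $\frac{cK}{2t^3}\|x^\ast\|^2$ integrates to $O(t^{\a-K-1})$ and hence contributes exactly $O(1/t^2)$ to $\mathcal{E}(t)$, with no logarithm. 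The price is that the coefficient of $\|x(t)-x^\ast\|^2$ becomes $\frac{(\a-1-K)K^2-Kc}{2t^3}$, which is no longer automatically nonpositive; one must choose $K$ close enough to $\a-1$ so that $(\a-1-K)K\le c$, which is possible for every $c>0$ because $K\mapsto(\a-1-K)K$ decreases to $0$ on $\left[\frac{\a+1}{2},\a-1\right]$. This one-parameter perturbation of your Lyapunov function is the missing idea; once it is in place, boundedness of the trajectory and $\|\dot x(t)\|=O(1/t)$ follow from $\mathcal{E}(t)=O(1/t^2)$ exactly as you indicate.
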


\begin{proof}
The  analysis is parallel to that of Theorem \ref{RateConvergenceResult-a}. Set $f^\ast := f(x^\ast)=\min_{\cH} f$. Let $b:[t_0,+\infty[\to\R$, $b(t)=\frac{K}{t}$ where $K>0$ will be defined later. Let us introduce  $\mathcal{E} : [t_0, +\infty[ \to \mathbb{R},$
\begin{align}\label{Lyapunov-rate}
\mathcal{E}(t) := \left( f(x(t)) - f^\ast \right) +\frac{c}{2t^2}\|x(t)\|^2+ \frac{1}{2} \| b(t)(x(t)-x^\ast) + \dot{x}(t) \|^2,
\end{align}
that will serve as a Lypaunov function. Then,
\begin{align}\label{engderiv1-rate}
\dot{\mathcal{E}}(t) &= \<\n f(x(t)),\dot{x}(t)\>-\frac{c}{t^3}\|x(t)\|^2+\frac{c}{t^2}\<\dot{x}(t),x(t)\>\\
\nonumber&+\<b'(t)(x(t)-x^*)+b(t)\dot{x}(t) +\ddot{x}(t),b(t)(x(t)-x^\ast) +  \dot{x}(t)\>.
\end{align}
According to the dynamic system \eqref{DynSys-crit}, we have
\begin{align}\label{SecondOrderDeriv-rate}
\ddot{x}(t) = - \frac{c}{t^2} x(t)-\frac{\a}{t} \dot{x}(t) - \nabla f(x(t)).
\end{align}
Therefore,
\begin{align}\label{forenergy2-rate}
&\<b'(t)(x(t)-x^*)+b(t)\dot{x}(t) +\ddot{x}(t),b(t)(x(t)-x^\ast) + \dot{x}(t)\>=\\
\nonumber&\left\<-\frac{K}{t^2}(x(t)-x^*)+\frac{K-\a}{t}\dot{x}(t) -\left(\frac{c}{t^2}x(t)+\nabla f(x(t))\right), \frac{K}{t}(x(t)-x^\ast) +  \dot{x}(t)\right\>=\\
\nonumber& -\frac{K^2}{t^3}\|x(t)-x^*\|^2+\frac{K^2-\a K-K}{t^2}\<\dot{x}(t),x(t)-x^*\>+\frac{K-\a}{t}\|\dot{x}(t)\|^2\\
\nonumber&
-\frac{c}{t^2}\<x(t),\dot{x}(t)\> -\<\nabla f(x(t)), \dot{x}(t)\>-\frac{K}{t}\left\<\frac{c}{t^2}x(t)+\nabla f(x(t)), x(t)-x^\ast\right\>.
\end{align}
Combining \eqref{engderiv1-rate} and \eqref{forenergy2-rate}, we get

\begin{align}\label{engderiv2-rate}
\dot{\mathcal{E}}(t) =&-\frac{c}{t^3}\|x(t)\|^2-\frac{K^2}{t^3}\|x(t)-x^*\|^2+\frac{K^2-\a K-K}{t^2}\<\dot{x}(t),x(t)-x^*\>+\frac{K-\a}{t}\|\dot{x}(t)\|^2\\
\nonumber& -\frac{K}{t}\left\<\frac{c}{t^2}x(t)+\nabla f(x(t)), x(t)-x^\ast\right\>.
\end{align}
Consider  the strongly convex function
$$f_t:\mathcal{H}\To\R,\, \; f_t(x)=f(x)+\frac{c}{2t^2}\|x\|^2.$$
From the gradient inequality we have
$$f_t(y)-f_t(x)\ge\<\n f_t(x),y-x\>+\frac{c}{2t^2}\|x-y\|^2,\mbox{ for all }x,y\in\mathcal{H}.$$
Take  $y=x^*$ and $x=x(t)$ in the above inequality.  We obtain
\begin{align*}
&f^*+\frac{c}{2t^2}\|x^*\|^2-f(x(t))-\frac{c}{2t^2}\|x(t)\|^2\ge\\
&-\left\<\n f(x(t))+\frac{c}{t^2}x(t),x(t)-x^*\right\>+\frac{c}{2t^2}\|x(t)-x^*\|^2.
\end{align*}
Consequently,
\begin{align}\label{forenergy4-rate}
-\left\<\frac{c}{t^2}x(t)+\n f(x(t)),x(t)-x^*\right\>\le & -(f(x(t))-f^*)-\frac{c}{2t^2}\|x(t)\|^2-\frac{c}{2t^2}\|x(t)-x^*\|^2\\
\nonumber & +\frac{c}{2t^2}\|x^*\|^2.
\end{align}
By multiplying \eqref{forenergy4-rate} with $\frac{K}{t}$, and injecting in \eqref{engderiv2-rate}, we obtain
\begin{align}\label{engderiv3-rate}
\dot{\mathcal{E}}(t)\le&-\frac{K}{t}(f(x(t))-f^*)-\left(\frac{c}{t^3}+\frac{Kc}{2t^3}\right)\|x(t)\|^2-\left(\frac{K^2}{t^3}+\frac{Kc}{2t^3}\right)\|x(t)-x^*\|^2\\
 \nonumber&+\frac{K^2-\a K-K}{t^2}\<\dot{x}(t),x(t)-x^*\>+\frac{K-\a}{t}\|\dot{x}(t)\|^2+ \frac{cK}{2t^3}\|x^*\|^2.
\end{align}
On the other hand, by multiplying the function $\mathcal{E}(t)$ by $\mu(t)=\frac{\a-K+1}{t}$, we obtain
\begin{align}\label{energ1-rate}
\mu(t)\mathcal{E}(t)=&\frac{\a-K+1}{t} \left( f(x(t)) - f^\ast \right) +\frac{(\a-K+1)c}{2 t^3}\|x(t)\|^2+ \frac{(\a-K+1)K^2}{2t^3} \|x(t)-x^\ast\|^2 \\
\nonumber &+\frac{\a-K+1}{2 t} \| \dot{x}(t) \|^2+ \frac{(\a-K+1)K}{t^2}\<\dot{x}(t),x(t)-x^*\>.
\end{align}
By adding \eqref{engderiv3-rate} and \eqref{energ1-rate}, we get
\begin{align}\label{forgronwal1-rate}
\dot{\mathcal{E}}(t)+\mu(t)\mathcal{E}(t)\le & \frac{\a-2K+1}{t}(f(x(t))-f^*)+\frac{(\a-2K-1)c}{2t^3}\|x(t)\|^2\\
 \nonumber&+\frac{(\a-K-1)K^2-Kc}{2t^3}\|x(t)-x^*\|^2+\frac{K-\a+1}{2t}\|\dot{x}(t)\|^2+ \frac{cK}{2t^3}\|x^*\|^2.
\end{align}

\noindent {\bf The case $\a>3.$}
Take $\frac{\a+1}{2}< K< \a-1.$ Since $\a> 3$, such $K$ exists.
This implies that $\a-2K+1<0$,  hence $\a-2K-1<0$, and $K-\a+1<0$. In addition, since $c>0$ there exists $K\in\left]\frac{\a+1}{2},\a-1\right[$ such that
 \begin{equation}\label{forc}
 (\a-K-1)K^2-Kc\le 0.
 \end{equation}
Indeed, \eqref{forc} can be deduced from the fact that the continuous function $\varphi(K)=(\a-K-1)K$ is decreasing on the interval $\left[\frac{\a+1}{2},\a-1\right]$ and $\varphi \left(\a-1\right)=0$. Therefore, for every $c>0$ there exists $K\in\left]\frac{\a+1}{2},\a-1\right[$ such that $c\ge \varphi(K).$
 So take $K\in\left]\frac{\a+1}{2},\a-1\right[$ such that \eqref{forc} holds. Then, by collecting the previous results, \eqref{forgronwal1-rate} yields
\begin{align}\label{forgronwal11-rate}
&\dot{\mathcal{E}}(t)+\mu(t)\mathcal{E}(t) \le \frac{cK}{2t^3}\|x^*\|^2.
\end{align}
Taking into account that $\mu(t)=\frac{\a-K+1}{t}$, by multiplying \eqref{forgronwal11-rate} with $t^{\a-K+1}$ we get
\begin{align}\label{forgronwal2-rate}
&\frac{d}{dt}\left(t^{\a-K+1}\mathcal{E}(t)\right)\le \frac{cK}{2}\|x^*\|^2 t^{\a-K-2}.
\end{align}
By integrating \eqref{forgronwal2-rate} on $[t_0,t]$, we get
\begin{align}\label{energyrate-rate}
&\mathcal{E}(t)\le \frac{cK\|x^*\|^2}{2(\a-K-1)} \frac{1}{t^2}-\frac{cK\|x^*\|^2}{2(\a-K-1)}\frac{t_0^{\a-K-1}}{t^{\a-K+1}}+\frac{t_0^{\a-K+1}\mathcal{E}(t_0)}{t^{\a-K+1}}.
\end{align}
Since $\a-K+1 >2$, we obtain
\begin{equation}\label{energyrate2-rate}
\mathcal{E}(t)=O\left(\frac{1}{t^2}\right)\mbox{ as }t\to+\infty.
\end{equation}
By definition of $\mathcal{E}(t)$ we immediately deduce that
\begin{equation}\label{decay}
f(x(t))-\min_{\cH} f=O\left(\frac{1}{t^2}\right)\mbox{ as }t\to+\infty,
\end{equation}
and further, that the trajectory $x(\cdot)$ is bounded and
$$\|\dot{x}(t)\|=O\left(\frac{1}{t}\right)\mbox{ as }t\to+\infty.$$

\noindent {\bf The case $\a=3$.}
Take $K=2$. With the previous notations, we have now $\mu(t)=\frac{2}{t}$ and \eqref{forgronwal1-rate} gives
\begin{align}\label{forgronwala3}
\dot{\mathcal{E}}(t)+\frac{2}{t}\mathcal{E}(t)\le & -\frac{c}{t^3}\|x(t)\|^2-\frac{c}{t^3}\|x(t)-x^*\|^2+ \frac{c}{t^3}\|x^*\|^2\le \frac{c}{t^3}\|x^*\|^2.
\end{align}
After multiplication of \eqref{forgronwala3} by $t^2$ we get
\begin{equation}\label{intera3}
\frac{d}{dt}(t^2\mathcal{E}(t))\le \frac{c}{t}\|x^*\|^2.
\end{equation}
By integrating \eqref{intera3} on $[t_0,t]$ we get
\begin{equation}\label{intera31}
\mathcal{E}(t)\le c\|x^*\|^2\frac{\ln t}{t^2}-c\|x^*\|^2\frac{\ln t_0}{t^2}+\frac{t_0^2\mathcal{E}(t_0)}{t^2}.
\end{equation}
Consequently, we have
\begin{equation}\label{energyrate2a3}
\mathcal{E}(t)=O\left(\frac{\ln t}{t^2}\right)\mbox{ as }t\to+\infty.
\end{equation}
By definition of $\mathcal{E}(t)$ we immediately deduce that
\begin{equation}\label{decaya3}
f(x(t))-\min f=O\left(\frac{\ln t}{t^2}\right)\mbox{ as }t\to+\infty.
\end{equation}
which gives the claim.
\end{proof}

\subsection{Strong convergence}

\begin{theorem}\label{StrongConvergence}
Let $t_0 > 0$ and,  for some starting points $x_0,v_0\in\mathcal{H}$, let $x : [t_0, +\infty[ \to \mathcal{H}$ be the unique global solution of  \eqref{DynSys-crit}. Let $x^*$ be the element of minimal norm of $S=\argmin f$, that is
$x^*=\mbox{proj}_{S}0$. Then, for all $\a> 3$ we have that
$$ \liminf_{t \to +\infty}{\| x(t) - x^\ast \|} = 0.$$
Further, if there exists $T \geq t_0$, such that the trajectory $\{ x(t) :t \geq T \}$ stays either in the open ball $B(0, \| x^\ast \|)$ or in its complement, then $x(t)$ converges strongly to $x^*$ as $t\to+\infty.$
\end{theorem}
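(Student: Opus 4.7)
My proof plan handles the two claims separately. The liminf statement is the core technical assertion; the strong convergence under the ball/complement dichotomy then follows by Opial--style arguments using the rate estimates from Theorem~\ref{RateConvergenceResult}. For $\liminf_{t\to+\infty}\|x(t)-x^*\|=0$ I would argue by contradiction, assuming $\|x(t)-x^*\|\geq \eta>0$ for all $t\geq T$. The key is to revisit \eqref{forgronwal1-rate}: the coefficient $(\a-K-1)K^2-Kc$ of $\|x(t)-x^*\|^2$ is strictly negative once $c>(\a-K-1)K$, which holds for $K\in\,]\tfrac{\a+1}{2},\a-1[$ chosen close enough to $\a-1$ (since $c>0$). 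Setting $\gamma_K:=Kc-(\a-K-1)K^2>0$ and retaining this term rather than discarding it as in \eqref{forgronwal11-rate} yields
\[
\frac{d}{dt}\bigl(t^{\a-K+1}\mathcal{E}(t)\bigr)+\tfrac{\gamma_K}{2}\,t^{\a-K-2}\|x(t)-x^*\|^2 \;\leq\; \tfrac{cK\|x^*\|^2}{2}\,t^{\a-K-2}.
\]
Integrating on $[T,t]$, dropping $t^{\a-K+1}\mathcal{E}(t)\geq 0$, substituting $\|x(s)-x^*\|^2\geq \eta^2$, and passing to the limit $t\to +\infty$ leads to $\gamma_K\eta^2\leq cK\|x^*\|^2$. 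The obstacle is that the naive optimization over $K$ only produces $\eta\leq \|x^*\|$, not $\eta=0$: the genuine contradiction must be extracted by also using the other nonnegative contributions to the left-hand side of \eqref{forgronwal1-rate} (those in $f(x(t))-f^*$, $\|x(t)\|^2$ and $\|\dot x(t)\|^2$) together with the projection characterization $\langle x^*,p-x^*\rangle\geq 0$ for every $p\in S$ in order to sharpen the constants. This is the principal hard step.

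For the strong convergence statement, I would split into the two cases distinguished in the hypothesis. In Case A, where $\|x(t)\|\leq \|x^*\|$ for $t\geq T$, the trajectory is bounded, so every weak cluster point $\bar x$ of $x(t)$ lies in $S=\argmin f$ (since $f(x(t))\to \min_\cH f$ by Theorem~\ref{RateConvergenceResult} and $f$ is weakly lower semicontinuous) and satisfies $\|\bar x\|\leq \liminf_{t}\|x(t)\|\leq \|x^*\|$. Since $x^*$ is the unique minimum-norm element of the convex set $S$, $\bar x=x^*$, hence $x(t)\rightharpoonup x^*$ weakly. Combining $\|x^*\|\leq \liminf_t\|x(t)\|\leq \limsup_t\|x(t)\|\leq \|x^*\|$ then gives $\|x(t)\|\to \|x^*\|$, and weak convergence plus norm convergence imply strong convergence in the Hilbert space $\cH$.

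In Case B, where $\|x(t)\|\geq \|x^*\|$ for $t\geq T$, set $h(t):=\tfrac12\|x(t)-x^*\|^2$. Using \eqref{DynSys-crit}, the convexity inequality $\langle x(t)-x^*,\nabla f(x(t))\rangle\geq f(x(t))-\min_\cH f\geq 0$, and the identity $\langle x(t)-x^*,x(t)\rangle=h(t)+\tfrac12(\|x(t)\|^2-\|x^*\|^2)\geq h(t)$ (which is exactly where the Case B assumption enters), I would derive
\[
\ddot h(t)+\frac{\a}{t}\dot h(t)+\frac{c}{t^2}h(t)\;\leq\;\|\dot x(t)\|^2\;=\;O(1/t^2).
\]
Since the liminf part provides a sequence $t_n\to+\infty$ with $h(t_n)\to 0$, a comparison argument with the associated stable Euler equation $\ddot y+\tfrac{\a}{t}\dot y+\tfrac{c}{t^2}y=0$ (whose solutions decay polynomially, as seen after the change of variable $u=\ln t$) would upgrade this to $\lim_{t\to +\infty}h(t)=0$, \ie $x(t)\to x^*$ strongly.
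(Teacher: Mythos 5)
There are two genuine gaps, and the central idea of the paper's proof is missing from your plan. For the $\liminf$ claim you set up a contradiction argument and then concede that it only yields $\eta\le \|x^*\|$, deferring the ``principal hard step''; that step is never supplied, so the first (and unconditional) assertion of the theorem is not proved. The paper does not argue by contradiction at all: it runs a trichotomy on the position of $x(t)$ relative to $B(0,\|x^*\|)$, and in the case $\|x(t)\|\ge\|x^*\|$ the decisive tool is the Tikhonov path $x_t:=\argmin_{\cH}\bigl(f+\tfrac{c}{2t^2}\|\cdot\|^2\bigr)$, with the classical facts $\|x_t\|\le\|x^*\|$ and $x_t\to x^*$. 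Strong convexity of $f_t$ gives $f_t(x(t))-f_t(x^*)\ge \tfrac{c}{2t^2}\bigl(\|x(t)-x_t\|^2+\|x_t\|^2-\|x^*\|^2\bigr)$, and a Lyapunov function built on $f_t(x(t))-f_t(x^*)$ (rather than $f(x(t))-f^*$) satisfies $\dot E+\tfrac{\a+1-K}{t}E\le(\a-1-2K)\tfrac{c}{2t^3}(\|x(t)\|^2-\|x^*\|^2)\le 0$ in that regime, whence $f_t(x(t))-f_t(x^*)=O(t^{-(\a+1-K)})=o(t^{-2})$ and $\|x(t)-x_t\|\to 0$. Your proposal never introduces $x_t$, and without it there is no mechanism to convert the value estimates into distance estimates. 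In the remaining case of the trichotomy (the trajectory crosses the sphere $\|x\|=\|x^*\|$ infinitely often) the paper extracts a sequence with $\|x(t_n)\|=\|x^*\|$ and shows $x(t_n)\to x^*$ strongly, which is what actually delivers the $\liminf$.

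Second, your Case~B argument is not salvageable as written even granting the $\liminf$. The inequality $\ddot h+\tfrac{\a}{t}\dot h+\tfrac{c}{t^2}h\le\|\dot x(t)\|^2$ is correct, but the forcing $\|\dot x(t)\|^2=O(1/t^2)$ sits exactly at the resonant scale of the Euler operator: the equation $\ddot y+\tfrac{\a}{t}\dot y+\tfrac{c}{t^2}y=\tfrac{C}{t^2}$ admits the \emph{constant} particular solution $y\equiv C/c$. Consequently any comparison or maximum-principle argument (after the substitution $u=\ln t$ the inequality reads $h''+(\a-1)h'+ch\le C$) only yields $\limsup_{t\to+\infty}h(t)\le C/c>0$, not $h(t)\to 0$; the sequence $h(t_n)\to 0$ does not improve this, since $h$ can in principle oscillate between $0$ and order $C/c$. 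To close this case you would need $\|\dot x(t)\|^2=o(1/t^2)$, which Theorem~\ref{RateConvergenceResult} does not provide. Your Case~A is fine and coincides with the paper's Case~II.
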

\begin{proof}
The proof combines energetic and geometric arguments, as it was initiated in \cite{AttCza1}. We successively consider  the three following configurations of the trajectory.

\smallskip

{\bf I.} Assume that  there exists $T\ge t_0$ such that $\|x(t)\|\ge \|x^*\|$ for all $t\ge T.$
Let us denote $f_t(x):=f(x)+\frac{c}{2t^2}\|x\|^2$ and let $x_t :=\argmin f_t(x).$
Let us recall some classical properties of the Tikhonov approximation:
\begin{equation}\label{Tikh-properties}
\forall t>0 \;  \; \|x_{t}\|\le\|x^*\|, \; \mbox{ and }\;
\lim_{t\to+\infty}\| x_t -x^* \|=0.
\end{equation}
\noindent Using the gradient inequality for the strongly convex function $f_t$, we have
$$f_t(x(t))-f_t(x_t)\ge \frac{c}{2t^2}\|x(t)-x_t\|^2.$$
On the other hand
$$f_t(x_t)-f_t(x^*)=f(x_t)-f^*+\frac{c}{2t^2}(\|x_t\|^2-\|x^*\|^2)\ge \frac{c}{2t^2}(\|x_t\|^2-\|x^*\|^2).$$
By adding the last two inequalities we get
\begin{equation}\label{forf}
f_t(x(t))-f_t(x^*)\ge \frac{c}{2t^2}(\|x(t)-x_t\|^2+\|x_t\|^2-\|x^*\|^2),
\end{equation}
Therefore, according to (\ref{Tikh-properties}), to obtain the strong convergence of the trajectory $x(t)$ to $x^*$, it is enough to show that $f_t(x(t))-f_t(x^*)=o\left(\frac{1}{t^2}\right),\mbox{ as }t\to+\infty.$

 For $K>0$, consider now the energy functional
\begin{align}\label{strenergfunc}
E(t)&=f_t(x(t))-f_t(x^*)+\frac{1}{2}\left\|\frac{K}{t}(x(t)-x^*)+\dot{x}(t)\right\|^2\\
\nonumber&=(f(x(t))-f(x^*))+\frac{c}{2t^2}(\|x(t)\|^2-\|x^*\|^2)+\frac{1}{2}\left\|\frac{K}{t}(x(t)-x^*)+\dot{x}(t)\right\|^2.
\end{align}
Then,
\begin{align}\label{strenergy}
\dot{E}(t)=&\<\n f_t(x(t)),\dot{x}(t)\>-\frac{c}{2t^3}(\|x(t)\|^2-\|x^*\|^2)\\
\nonumber&+\left\<-\frac{K}{t^2}(x(t)-x^*)+\frac{K}{t}\dot{x}(t)+\ddot{x}(t),\frac{K}{t}(x(t)-x^*)+\dot{x}(t)\right\>.
\end{align}
Let us examine the different terms of (\ref{strenergy}). According to the constitutive equation \eqref{DynSys-crit} we have
\begin{align}\label{strenergy1}
&\left\<-\frac{K}{t^2}(x(t)-x^*)+\frac{K}{t}\dot{x}(t)+\ddot{x}(t),\frac{K}{t}(x(t)-x^*)+\dot{x}(t)\right\>=\\
\nonumber&\left\<-\frac{K}{t^2}(x(t)-x^*)+\frac{K-\a}{t}\dot{x}(t) -\left(\frac{c}{t^2}x(t)+\nabla f(x(t))\right), \frac{K}{t}(x(t)-x^\ast) +  \dot{x}(t)\right\>=\\
\nonumber& -\frac{K^2}{t^3}\|x(t)-x^*\|^2+\frac{K^2-\a K-K}{t^2}\<\dot{x}(t),x(t)-x^*\>+\frac{K-\a}{t}\|\dot{x}(t)\|^2\\
\nonumber&
-\frac{c}{t^2}\<x(t),\dot{x}(t)\> -\<\nabla f(x(t)), \dot{x}(t)\>-\frac{K}{t}\left\<\frac{c}{t^2}x(t)+\nabla f(x(t)), x(t)-x^\ast\right\>.
\end{align}
Further, from \eqref{forenergy4-rate} we get
\begin{eqnarray}
&&-\frac{K}{t}\left\<\frac{c}{t^2}x(t)+\n f(x(t)),x(t)-x^*\right\>  \nonumber \\
&&\leq  -\frac{K}{t}(f(x(t))-f^*)-\frac{cK}{2t^3}\|x(t)\|^2-\frac{cK}{2t^3}\|x(t)-x^*\|^2 +\frac{cK}{2t^3}\|x^*\|^2 \nonumber  \\
&&=-\frac{K}{t}(f_t(x(t))-f_t(x^*))-\frac{cK}{2t^3}\|x(t)-x^*\|^2 . \label{strenergy2}
\end{eqnarray}
Injecting \eqref{strenergy1} and \eqref{strenergy2}  in \eqref{strenergy} we get
\begin{eqnarray}\label{strenergy4}
\dot{E}(t)&\le&-\frac{K}{t}(f_t(x(t))-f_t(x^*)) -\frac{c}{t^{3}}(\|x(t)\|^2-\|x^*\|^2)-\frac{2K^2+cK}{2t^3}\|x(t)-x^*\|^2 \nonumber  \\
&+&\frac{K^2-\a K-K}{t^2}\<\dot{x}(t),x(t)-x^*\>+\frac{K-\a}{t}\|\dot{x}(t)\|^2. \label{strenergy4b}
\end{eqnarray}
Consider now the function $\mu(t)=\frac{\a+1-K}{t}.$ Then,
\begin{align}\label{forstr4}
\mu(t)E(t)=&\frac{\a+1-K}{t}(f_t(x(t))-f_t(x^*))+\frac{K^2(\a+1-K)}{2t^3}\|x(t)-x^*\|^2\\
\nonumber&+\frac{K(\a+1-K)}{t^2}\<\dot{x}(t),x(t)-x^*\>+\frac{\a+1-K}{2t}\|\dot{x}(t)\|^2.
\end{align}
Consequently, \eqref{strenergy4b} and \eqref{forstr4} yield
\begin{eqnarray}
&&\dot{E}(t)+\mu(t)E(t)\leq \frac{\a+1-2K}{t}(f_t(x(t))-f_t(x^*))-\frac{c}{t^3}(\|x(t)\|^2-\|x^*\|^2) \hspace{1cm}\nonumber \\
 && \hspace{2.5cm}+   \frac{K^2(\a-1-K)-cK}{2t^3}\|x(t)-x^*\|^2+\frac{K-\a+1}{2t}\|\dot{x}(t)\|^2 \nonumber\\
&& = \frac{\a+1-2K}{t}(f(x(t))-f(x^*))+(\a-1-2K)\frac{c}{2t^3}(\|x(t)\|^2-\|x^*\|^2) \nonumber\\
 &&\hspace{2.5cm}+\frac{K^2(\a-1-K)-cK}{2t^3}\|x(t)-x^*\|^2+\frac{K-\a+1}{2t}\|\dot{x}(t)\|^2. \label{strenergy5}
\end{eqnarray}
Assume that  $\frac{\a+1}{2}< K< \a-1.$ Since $\a> 3$ such $K$ exists.
As in the proof of Theorem \ref{RateConvergenceResult} we deduce that $\a-2K+1<0$, $K-\a+1<0$ and since $c>0$ there exists $K\in\left(\frac{\a+1}{2},\a-1\right)$ such that
 \begin{equation}\label{forc1}
 (\a-K-1)K^2-Kc\le 0.
 \end{equation}

 So take $K\in\left(\frac{\a+1}{2},\a-1\right)$ such that \eqref{forc1} holds. 
 Then, \eqref{strenergy5} leads to
\begin{align}\label{strenergy6}
\dot{E}(t)+\frac{\a+1-K}{t}E(t)\le (\a-1-2K)\frac{c}{2t^3}(\|x(t)\|^2-\|x^*\|^2).
\end{align}
Let us integrate the differential inequality \eqref{strenergy6}. After multiplication by $t^{\a+1-K}$ we get
$$\frac{d}{dt}t^{\a+1-K}E(t)\le \frac{c}{2}(\a-1-2K)t^{\a-2-K}(\|x(t)\|^2-\|x^*\|^2)$$
and integrating the latter on $[T,t],\,t>T$ we obtain
\begin{equation}
E(t)\le\frac{c}{2}(\a-1-2K)\frac{\int_{T}^t s^{\a-2-K}(\|x(s)\|^2-\|x^*\|^2)ds}{t^{\a+1-K}}+\frac{T^{\a+1-K}E(T)}{t^{\a+1-K}}.
\end{equation}
In one hand, from the definition of $E(t)$ we have
$$f_t(x(t))-f_t(x^*)\le E(t).$$
Therefore,
$$f_t(x(t))-f_t(x^*)\le \frac{c}{2}(\a-1-2K)\frac{\int_{T}^t s^{\a-2-K}(\|x(s)\|^2-\|x^*\|^2)ds}{t^{\a+1-K}}+\frac{T^{\a+1-K}E(T)}{t^{\a+1-K}}.$$
On the other hand \eqref{forf} gives
$$f_t(x(t))-f_t(x^*)\ge \frac{c}{2t^2}(\|x(t)-x_t\|^2+\|x_t\|^2-\|x^*\|^2).$$
Consequently,
\begin{equation}\label{fontos}
(\a-1-2K)\frac{\int_{T}^t s^{\a-2-K}(\|x(s)\|^2-\|x^*\|^2)ds}{t^{\a-1-K}}+\frac{2T^{\a+1-K}E(T)}{ct^{\a-1-K}}\ge \|x(t)-x_t\|^2+\|x_t\|^2-\|x^*\|^2.
\end{equation}
By assumption  $\|x(t)\|\ge \|x^*\|$ for all $t\ge T$  and $\a-1-2K<0.$
 Hence, for all $t>T$, \eqref{fontos} leads to
\begin{equation}\label{fontoska}
\frac{2 T^{\a+1-K}E(T)}{ct^{\a-1-K}}\ge \|x(t)-x_t\|^2+\|x_t\|^2-\|x^*\|^2.
\end{equation}
Now, by taking the limit $t\To +\infty$ and using that $x_t\to x^*,\,t\to+\infty$ we get
$$\lim_{t\to+\infty}\|x(t)-x_t\|\le 0$$
and hence
$$\lim_{t\to+\infty}x(t)=x^*.$$

{\bf II.} Assume now that there exists $T\ge t_0$ such that $\|x(t)\|< \|x^*\|$ for all $t\ge T.$
According to Theorem \ref{RateConvergenceResult}, we have that
$$
\lim _{t \rightarrow+\infty} f(x(t))=\min_{\cH} f.
$$
Let  $\bar{x} \in \mathcal{H}$ be a weak sequential cluster point of the trajectory $x,$ which exists since, by Theorem \ref{RateConvergenceResult},  the trajectory is bounded. So,  there exists a sequence $\left(t_{n}\right)_{n \in \mathbb{N}} \subseteq[T,+\infty)$ such that $t_{n} \to +\infty$ and $x\left(t_{n}\right)$ converges weakly to $\bar{x}$ as $n \to +\infty$. Since $f$ is weakly lower semicontinuous,  we deduce that
$$
f(\bar{x}) \leq \liminf _{n \rightarrow+\infty} f\left(x\left(t_{n}\right)\right)=\min_{\cH} f \, ,$$ hence $\bar{x} \in \operatorname{argmin} f.$
Now, since the norm is weakly lower semicontinuous, and since $\|x(t)\|< \|x^*\|$ for all $t\ge T$, we have
$$
\begin{array}{c}
\|\bar{x}\| \leq \liminf _{n \rightarrow+\infty}\left\|x\left(t_{n}\right)\right\| \leq\left\|x^\ast \right\|.
\end{array}
$$
Combining $\bar{x} \in \operatorname{argmin} f$ with the definition of $x^\ast$, this implies that $\bar{x}=x^{*}.$ This shows that the trajectory $x(\cdot)$ converges weakly to $x^\ast$. So
$$
\left\|x^\ast \right\| \leq \liminf _{t \rightarrow+\infty}\|x(t)\| \leq \limsup _{t \rightarrow+\infty}\|x(t)\| \leq\left\|x^\ast \right\|,$$
hence we have
$$ \lim _{t \rightarrow+\infty}\|x(t)\|=\left\|x^\ast \right\|.$$
Combining this property with  $x(t) \rightharpoonup x^\ast$ as $t \to +\infty,$ we obtain the strong convergence, that is
$$
\lim _{t \rightarrow+\infty} x(t)=x^\ast.$$

{\bf III.} We suppose that for every $T \geq t_{0}$ there exists $t \geq T$ such that $\left\|x^\ast \right\|>\|x(t)\|$ and also there exists $s \geq T$ such that $\left\|x^{*}\right\| \leq\|x(s)\|$.
From the continuity of $x$, we deduce that there exists a sequence $\left(t_{n}\right)_{n \in \mathbb{N}} \subseteq\left[t_{0},+\infty\right)$ such that $t_{n} \to +\infty$ as $n \to +\infty$ and, for all $n \in \mathbb{N}$ we have
$$
\left\|x\left(t_{n}\right)\right\|=\left\|x^{*}\right\|.$$

 Consider $\bar{x} \in \mathcal{H}$ a weak sequential cluster point of $\left(x\left(t_{n}\right)\right)_{n \in \mathbb{N}}$.
We deduce as in case {\bf II} that  $\bar{x}=x^{*}.$ Hence, $x^*$ is the only weak sequential cluster point of $x(t_n)$ and consequently  the sequence $x(t_n)$ converges weakly to $x^\ast$.
Obviously $\left\|x\left(t_{n}\right)\right\| \to \left\|x^{*}\right\|$ as $n \to +\infty$. So, it follows that $x(t_n)\to x^*,\,n\to+\infty$, that is $\left\|x\left(t_{n}\right)-x^{*}\right\| \to 0$ as $n \to +\infty .$ This leads to
$
\liminf _{t \rightarrow+\infty}\left\|x(t)-x^\ast \right\|=0$.
\end{proof}

\section{Strong convergence-General case}\label{sec:strong}

We are going to analyze via Lyapunov analysis the  strong convergence properties as $t\to +\infty$ of the solution trajectories of the inertial dynamic (TRIGS) that we recall below
\begin{equation*}
 \ddot{x}(t) + \d\sqrt{\e(t)}  \dot{x}(t) + \nabla f (x(t)) + \e (t) x(t) =0.
\end{equation*}
\begin{theorem}\label{StrongConvergenceResult-a}

Let consider the dynamic system {\rm({TRIGS})} where we assume that $\e(\cdot)$ satisfies the condition ${\rm(CD)}_K$ for some $K>0$,
$\int_{t_0}^{+\infty}\e^{\frac32}(t)dt<+\infty$ and
$\lim_{t\to+\infty} \frac{1}{\sqrt{\e(t)}\exp \left({\displaystyle{\int_{t_0}^t (\d-K)\sqrt{\e(s)}ds}}\right)}=0.$

\noindent Then, for any   global solution trajectory $x : [t_0, +\infty[ \to \mathcal{H}$  of  {\rm({TRIGS})},
$$ \liminf_{t \to +\infty}{\| x(t) - x^\ast \|} = 0,$$
where  $x^*$ is the element of minimal norm of $\argmin f$, that is $x^*=\mbox{proj}_{\argmin f}0$.\\
Further, if there exists $T \geq t_0$, such that the trajectory $\{ x(t) :t \geq T \}$ stays either in the open ball $B(0, \| x^\ast \|)$ or in its complement, then $x(t)$ converges strongly to $x^*$ as $t\to+\infty.$
\end{theorem}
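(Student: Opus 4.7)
The plan is to follow the trichotomy method used in Theorem \ref{StrongConvergence}, partitioning the behaviour of the trajectory into: (I) $\|x(t)\| \geq \|x^*\|$ eventually, (II) $\|x(t)\| < \|x^*\|$ eventually, and (III) oscillation across the sphere $\{\|\cdot\| = \|x^*\|\}$. Cases (I) and (II) give full strong convergence $x(t) \to x^*$, and together with (III) will yield $\liminf_{t \to \infty} \|x(t) - x^*\| = 0$. Cases (II) and (III) transcribe essentially verbatim from Theorem \ref{StrongConvergence}: the integrability hypothesis $\e^{3/2} \in L^1$ is precisely what Corollary \ref{gen_cor_1} requires to conclude $f(x(t)) \to \min_{\cH} f$; the trajectory is bounded in (II) trivially and in (III) via continuity at a threshold sequence $\{t_n\}$ with $\|x(t_n)\| = \|x^*\|$; any weak cluster point lies in $\argmin f$ by lower semicontinuity and satisfies $\|\bar x\| \leq \|x^*\|$, forcing $\bar x = x^*$; weak convergence combined with $\|x(t)\| \to \|x^*\|$ then upgrades to strong convergence.

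For Case (I), I will use the shifted Lyapunov function
\begin{equation*}
E(t) := f_t(x(t)) - f_t(x^*) + \tfrac{1}{2}\bigl\|K\sqrt{\e(t)}(x(t) - x^*) + \dot x(t)\bigr\|^2,
\end{equation*}
where $f_t(x) = f(x) + \tfrac{\e(t)}{2}\|x\|^2$. The point is that $E = \mathcal{E} - \tfrac{\e}{2}\|x^*\|^2$, with $\mathcal{E}$ the energy of Theorem \ref{RateConvergenceResult-a}. Repeating the chain-rule computation for $E$, the inequality \eqref{forgronwal1-b} transforms into the same bound but with each $\|x(t)\|^2$ replaced by $\|x(t)\|^2 - \|x^*\|^2$ and with the residual $\tfrac{K\|x^*\|^2}{2}\e^{3/2}(t)$ cancelled. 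The cancellation arises because differentiating the shift contributes $-\tfrac{\dot\e}{2}\|x^*\|^2 - \mu(t)\tfrac{\e}{2}\|x^*\|^2$; using $\mu = -\dot\e/(2\e) + (\d-K)\sqrt{\e}$, a direct algebraic check shows these exactly supply the $-\tfrac{1}{4}(\dot\e + 2(\d-2K)\e^{3/2})\|x^*\|^2$ needed to complete the square with $\|x(t)\|^2$.

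Under ${\rm(CD)}_K$ with $\dot\e \leq 0$ and $K \geq \d/2$, the coefficient $\tfrac{1}{4}(\dot\e + 2(\d-2K)\e^{3/2})$ is nonpositive, so in Case (I), where $\|x(t)\|^2 - \|x^*\|^2 \geq 0$, that product is nonpositive. The other coefficients in the transformed \eqref{forgronwal1-b} are already nonpositive by the analysis of Theorem \ref{RateConvergenceResult-a}. This yields $\dot E(t) + \mu(t) E(t) \leq 0$, and multiplying by the integrating factor $\mathfrak{M}$ gives $E(t) \leq C/\mathfrak{M}(t)$. To convert this decay into strong convergence I will invoke the standard Tikhonov properties of $x_t := \argmin f_t$, namely $\|x_t\| \leq \|x^*\|$ and $x_t \to x^*$ as $t \to \infty$, together with the strong-convexity estimate $f_t(x(t)) - f_t(x^*) \geq \tfrac{\e(t)}{2}(\|x(t) - x_t\|^2 + \|x_t\|^2 - \|x^*\|^2)$. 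Combining these yields
\begin{equation*}
\|x(t) - x_t\|^2 \leq (\|x^*\|^2 - \|x_t\|^2) + \frac{2C}{\e(t)\mathfrak{M}(t)}.
\end{equation*}
The first summand vanishes since $x_t \to x^*$, and by the explicit form $\mathfrak{M}(t) = C'\e(t)^{-1/2}\exp(\int_{t_1}^t(\d-K)\sqrt{\e(s)}\,ds)$ from Corollary \ref{gen_cor_1}, the second summand is a constant multiple of $\bigl(\sqrt{\e(t)}\exp\int_{t_1}^t(\d-K)\sqrt{\e(s)}\,ds\bigr)^{-1}$, which vanishes by the standing hypothesis. Hence $x(t) \to x^*$ in Case (I).

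The delicate point on which I expect to spend most care is the algebra at the end of paragraph two: verifying that the shift $\mathcal{E} \mapsto E = \mathcal{E} - \tfrac{\e}{2}\|x^*\|^2$ exactly annihilates the residual $\tfrac{K\|x^*\|^2}{2}\e^{3/2}(t)$ in \eqref{forgronwal1-b} while converting $\|x(t)\|^2$ into $\|x(t)\|^2 - \|x^*\|^2$. Once this identity is confirmed, the rest of Case (I) is a mechanical adaptation of Theorem \ref{RateConvergenceResult-a}, and Cases (II)--(III) import directly from Theorem \ref{StrongConvergence}.
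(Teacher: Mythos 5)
Your proposal is correct and follows essentially the same route as the paper's own proof: the same trichotomy on the position of the trajectory relative to the sphere $\{\|\cdot\|=\|x^*\|\}$, the same shifted Lyapunov function $E=\mathcal{E}-\tfrac{\e}{2}\|x^*\|^2$ whose evolution inequality picks up the factor $\|x(t)\|^2-\|x^*\|^2$ with the nonpositive coefficient $\tfrac14\bigl(\dot\e+2(\d-2K)\e^{3/2}\bigr)$, and the same passage to the limit via the Tikhonov curve $x_t$ and the standing hypothesis on $\sqrt{\e(t)}\exp\bigl(\int_{t_0}^t(\d-K)\sqrt{\e(s)}\,ds\bigr)$. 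The algebraic cancellation you flag as the delicate point does check out exactly as you describe, and Cases (II)--(III) import from Theorem \ref{StrongConvergence} as claimed.
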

\begin{proof}
The proof is parallel to that of Theorem \ref{StrongConvergence}. We analyze the behavior of the trajectory $x(\cdot)$ depending on its position with respect to the ball $B(0, \| x^\ast \|)$.

\smallskip

{\bf I.} Assume that $\|x(t)\|\ge\|x^*\|$ for all $t\ge T.$ Let us denote $f_t(x)=f(x)+\frac{\e(t)}{2}\|x\|^2$, and consider the energy functional $E: \left[t_1,+\infty \right[\to\R$ defined by
$$E(t):=f_t(x(t))-f_t(x^*)+\frac{1}{2}\|c(t)(x(t)-x^*)+\dot{x}(t)\|^2,$$
where $c(t)=K\sqrt{\e(t)}$. 
 Note that
$E(t)=\mathcal{E}(t) -\frac{\e(t)}{2}\|x^*\|^2$,
where $\mathcal{E}(t)$ was defined in the proof of Theorem \ref{RateConvergenceResult-a}. Hence, reasoning as in the proof of  Theorem \ref{RateConvergenceResult-a}, see \eqref{Basic1} (and keeping the term containing $\|x(t)\|^2$ in the right hand side of \eqref{forgronwal1-b}), we get for all $t\ge t_1$ that
\begin{align}\label{strforgronwal1}
\dot{{E}}(t)+\mu(t){E}(t)\le &\left(\frac{\dot{\e}(t)}{2}-c(t)\frac{\e(t)}{2}+\mu(t)\frac{\e(t)}{2}\right)(\|x(t)\|^2-\|x^*\|^2),
\end{align}
where $\mu(t)=-\frac{\dot{\e}(t)}{2\e(t)}+(\d-K)\sqrt{\e(t)}$.
An elementary computation gives  $\frac{\dot{\e}(t)}{2}-c(t)\frac{\e(t)}{2}+\mu(t)\frac{\e(t)}{2} \leq 0$, because of $\e(\cdot)$ decreasing and $K \geq \frac{\delta}{2}$.
Since $\|x(t)\|\ge\|x^*\|$ for all $t\ge T$, \eqref{strforgronwal1} yields
\begin{align}\label{strforgronwal2}
\dot{{E}}(t)+\mu(t){E}(t)\le 0,\mbox{ for all }t\ge T_1=\max\{T,t_1\}.
\end{align}
Set
$$\mathfrak{M}(t)=\exp \left({\displaystyle{\int_{T_1}^t\mu(s)ds}}\right)=\exp \left({\displaystyle{\int_{T_1}^t -\frac{\dot{\e}(s)}{2\e(s)}+(\d-K)\sqrt{\e(s)}ds}}\right).$$
Therefore, we have with $C=\sqrt{\e(T_1)}$
$$\mathfrak{M}(t)=  C\frac{1}{\sqrt{\e(t)}}\exp \left({\displaystyle{\int_{T_1}^t (\d-K)\sqrt{\e(s)}ds}}\right).$$
Multiplying \eqref{strforgronwal2} with $\mathfrak{M}(t)$ and integrating on an interval $[T_1, t],$ we get for all $t\ge T_1$ that
$$\mathfrak{M}(t)E(t)\le \mathfrak{M}(T_1)E(T_1)=C'.$$
Consequently, there exists $C_1'>0$ such that for all $t\ge T_1$ one has
$$E(t)\le \frac{C_1'\sqrt{\e(t)}}{\exp \left({\displaystyle{\int_{T_1}^t (\d-K)\sqrt{\e(s)}ds}}\right)}.$$
Further, $f_t(x(t))-f_t(x^*)\le E(t)$, for all $t\ge t_1$. Therefore,
\begin{equation}\label{forfstr}
f_t(x(t))-f_t(x^*)\le \frac{C_1'\sqrt{\e(t)}}{\exp \left({\displaystyle{\int_{T_1}^t (\d-K)\sqrt{\e(s)}ds}}\right)},\mbox{ for all }t\ge T_1.
\end{equation}
For fixed $t$ let us denote  $x_{\e(t)}=\argmin f_t(x).$  Obviously $\|x_{\e(t)}\|\le\|x^*\|.$\\
Using the gradient inequality for the strongly convex function $f_t$ we have
$$f_t(x)-f_t(x_{\e(t)})\ge \frac{{\e(t)}}{2}\|x-x_{\e(t)}\|^2\mbox{ for all }x\in\cH\mbox{ and }t\ge t_0.$$
On the other hand
$$f_t(x_{\e(t)})-f_t(x^*)=f(x_{\e(t)})-f^*+\frac{{\e(t)}}{2}(\|x_{\e(t)}\|^2-\|x^*\|^2)\ge \frac{{\e(t)}}{2}(\|x_{\e(t)}\|^2-\|x^*\|^2).$$
Now, by adding the last two inequalities we get
\begin{equation}\label{forfgeneral}
f_t(x)-f_t(x^*)\ge \frac{{\e(t)}}{2}(\|x-x_{\e(t)}\|^2+\|x_{\e(t)}\|^2-\|x^*\|^2)\mbox{ for all }x\in\cH\mbox{ and }t\ge t_0.
\end{equation}

Hence, \eqref{forfstr} and \eqref{forfgeneral} lead to
\begin{equation}\label{forfstr1}
\|x(t)-x_{\e(t)}\|^2+\|x_{\e(t)}\|^2-\|x^*\|^2\le \frac{C_2'}{\sqrt{\e(t)}\exp \left({\displaystyle{\int_{T_1}^t (\d-K)\sqrt{\e(s)}ds}}\right)},\mbox{ for all }t\ge T_1.
\end{equation}
Now, by taking the limit as $t\to +\infty$, and using that $x_{\e(t)}\to x^*$ as $t\to+\infty$ and the assumption in the hypotheses of the theorem we get
$\lim_{t\to+\infty}\|x(t)-x_{\e(t)}\|\le 0$,
and hence
$\lim_{t\to+\infty}x(t)=x^*.$

\smallskip

{\bf II.} Assume now, that $\|x(t)\|<\|x^*\|$ for all $t\ge T.$ By Corollary \ref{gen_cor_1}  we get that $f(x(t))\to \min f$ as $t\to+\infty.$
Now, we take $\bar{x} \in \mathcal{H}$ a weak sequential cluster point of the trajectory $x,$ which exists since  the trajectory is bounded. This means that there exists a sequence $\left(t_{n}\right)_{n \in \mathbb{N}} \subseteq[T,+\infty)$ such that $t_{n} \to +\infty$ and $x\left(t_{n}\right)$ converges weakly to $\bar{x}$ as $n \to +\infty$. We know that $f$ is weakly lower semicontinuous, so one has
$$
f(\bar{x}) \leq \liminf _{n \rightarrow+\infty} f\left(x\left(t_{n}\right)\right)=\min f \, ,$$ hence $\bar{x} \in \operatorname{argmin} f.$
Now, since the norm is weakly lower semicontinuous one has that
$$
\begin{array}{c}
\|\bar{x}\| \leq \liminf _{n \rightarrow+\infty}\left\|x\left(t_{n}\right)\right\| \leq\left\|x^\ast \right\|
\end{array}
$$
which, from the definition of $x^\ast$, implies that $\bar{x}=x^{*}.$ This shows that the trajectory $x(\cdot)$ converges weakly to $x^\ast$. So
$$
\left\|x^\ast \right\| \leq \liminf _{t \rightarrow+\infty}\|x(t)\| \leq \limsup _{t \rightarrow+\infty}\|x(t)\| \leq\left\|x^\ast \right\|,$$
hence we have
$$ \lim _{t \rightarrow+\infty}\|x(t)\|=\left\|x^\ast \right\|.$$
From  the previous relation and the fact that $x(t) \rightharpoonup x^\ast$ as $t \to +\infty,$ we obtain the strong convergence, that is
$$
\lim _{t \rightarrow+\infty} x(t)=x^\ast.$$

{\bf III.} We suppose that for every $T \geq t_{0}$ there exists $t \geq T$ such that $\left\|x^\ast \right\|>\|x(t)\|$ and also there exists $s \geq T$ such that $\left\|x^{*}\right\| \leq\|x(s)\|$.
From the continuity of $x$, we deduce that there exists a sequence $\left(t_{n}\right)_{n \in \mathbb{N}} \subseteq\left[t_{0},+\infty\right)$ such that $t_{n} \to +\infty$ as $n \to +\infty$ and, for all $n \in \mathbb{N}$ we have
$$
\left\|x(t_{n})\right\|=\left\|x^{*}\right\|.$$

 Consider $\bar{x} \in \mathcal{H}$ a weak sequential cluster point of $\left(x\left(t_{n}\right)\right)_{n \in \mathbb{N}}$.
We deduce as at case {\bf II} that  $\bar{x}=x^{*}.$ Hence, $x^*$ is the only weak sequential cluster point of $x(t_n)$ and consequently  the sequence $x(t_n)$ converges weakly to $x^\ast$.

Obviously $\left\|x(t_{n})\right\| \to \left\|x^{*}\right\|$ as $n \to +\infty$. So, it follows that $x(t_n)\to x^*,\,n\to+\infty$, that is $\left\|x\left(t_{n}\right)-x^{*}\right\| \to 0$ as $n \to +\infty .$ This leads to
$
\liminf _{t \rightarrow+\infty}\left\|x(t)-x^\ast \right\|=0.$
\end{proof}

\subsection{The case $\e(t)$ is of order $1/t^r$, $\frac{2}{3}< r<2$}
Take $\e(t)=1/t^r$, $\frac{2}{3}<r<2$.
Then,
 $\int_{t_0}^{+\infty}\e^{\frac32}(t)dt= \int_{t_0}^{+\infty}\frac{1}{t^{\frac32 r}}dt<+\infty$,
 $\left(\frac{1}{\sqrt{\e(t)}}\right)'=\frac{r}{2}t^{\frac{r}{2}-1}$ and
$$\lim_{t\to+\infty} \frac{1}{\sqrt{\e(t)}\exp \left({\displaystyle{\int_{t_0}^t (\d-K)\sqrt{\e(s)}ds}}\right)}=
\lim_{t\to+\infty}\frac{C t^\frac{r}{2}}{\exp \left( \frac{2(\d-K)}{2-r}t^{1-\frac{r}{2}}\right)} =0.$$
Therefore, Theorem \ref{StrongConvergenceResult-a} can be applied.
Let us summarize these results in the following statement.

\begin{theorem}\label{StrongRateConvergenceResult-bb}
Take $\e(t)=1/t^r$, $  \frac{2}{3} <r<2$.
Let $x : [t_0, +\infty[ \to \mathcal{H}$ be a global solution trajectory of
$$
\ddot{x}(t) +  \frac{\d}{t^{\frac{r}{2}}}\dot{x}(t) + \nabla f\left(x(t) \right)+ \frac{1}{t^{r}} x(t)=0.
$$
Then, \; \; $ \liminf_{t \to +\infty}{\| x(t) - x^\ast \|} = 0.$

\smallskip

\noindent Further, if there exists $T \geq t_0$, such that the trajectory $\{ x(t) :t \geq T \}$ stays either in the open ball $B(0, \| x^\ast \|)$ or in its complement, then $x(t)$ converges strongly to $x^*$ as $t\to+\infty.$
\end{theorem}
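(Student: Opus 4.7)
The plan is to derive this as a direct corollary of the general strong convergence result, Theorem \ref{StrongConvergenceResult-a}. So the main task is to verify, for the specific choice $\e(t)=1/t^{r}$ with $\frac{2}{3}<r<2$, the three hypotheses needed there: (a) the controlled decay property ${\rm(CD)}_K$ for some admissible $K$, (b) the integrability $\int_{t_0}^{+\infty}\e^{3/2}(t)\,dt<+\infty$, and (c) the exponential decay condition
\[
\lim_{t\to+\infty}\frac{1}{\sqrt{\e(t)}\exp\bigl(\int_{t_0}^{t}(\delta-K)\sqrt{\e(s)}\,ds\bigr)}=0.
\]
Once these are in place, Theorem \ref{StrongConvergenceResult-a} yields $\liminf_{t\to+\infty}\|x(t)-x^{\ast}\|=0$, as well as the strong convergence statement in the dichotomy where the trajectory eventually stays in or out of $B(0,\|x^{\ast}\|)$.

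First I would compute the three elementary quantities associated with $\e(t)=1/t^{r}$: one has $\bigl(1/\sqrt{\e(t)}\bigr)'=\tfrac{r}{2}t^{r/2-1}$, which is positive (so $\e(\cdot)$ is nonincreasing) and tends to $0$ as $t\to+\infty$ because $r<2$; therefore for any prescribed positive constant $M$, in particular for $M=\min(2K-\delta,\delta-K)$ with $K$ chosen in the admissible range from the definition of ${\rm(CD)}_K$, there exists $t_1\geq t_0$ beyond which $\bigl(1/\sqrt{\e(t)}\bigr)'\leq M$. This gives (a). For (b), since $\e^{3/2}(t)=1/t^{3r/2}$ and $r>2/3$, we have $3r/2>1$, so the integral converges.

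The only part needing a small argument is (c). Here I would observe that $\sqrt{\e(s)}=s^{-r/2}$ and $r<2$, so
\[
\int_{t_0}^{t}(\delta-K)\sqrt{\e(s)}\,ds=\frac{2(\delta-K)}{2-r}\bigl(t^{1-r/2}-t_0^{1-r/2}\bigr),
\]
which diverges to $+\infty$ as $t\to+\infty$ since $1-r/2>0$ and $\delta-K>0$. Meanwhile $1/\sqrt{\e(t)}=t^{r/2}$ is only polynomial in $t$. Hence the denominator in (c) grows like $C\,t^{r/2}\exp\bigl(\tfrac{2(\delta-K)}{2-r}t^{1-r/2}\bigr)$, and exponential growth of $t^{1-r/2}$ dominates the polynomial factor $t^{r/2}$, yielding the required limit $0$.

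I do not see a genuine obstacle in this argument; the only delicate point is ensuring that the parameter $K$ used to verify ${\rm(CD)}_K$ in step (a) is the same as the one used in (c), but both only require $K$ in the admissible range with $K<\delta$, so any such $K$ will do. With (a)--(c) established, Theorem \ref{StrongConvergenceResult-a} applies verbatim and yields both conclusions of the theorem.
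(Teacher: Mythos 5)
Your proposal is correct and follows essentially the same route as the paper: the paper likewise treats this theorem as an application of Theorem \ref{StrongConvergenceResult-a}, computing $\bigl(1/\sqrt{\e(t)}\bigr)'=\tfrac{r}{2}t^{r/2-1}$ (which tends to $0$ since $r<2$, so ${\rm(CD)}_K$ holds eventually), noting $\int t^{-3r/2}\,dt<+\infty$ because $r>2/3$, and checking that $t^{r/2}$ is dominated by $\exp\bigl(\tfrac{2(\delta-K)}{2-r}t^{1-r/2}\bigr)$. Your added remark that the same admissible $K$ (with $\min(2K-\delta,\delta-K)>0$ and $\delta-K>0$) serves in both (a) and (c) is a correct and worthwhile clarification of a point the paper leaves implicit.
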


\section{Fast inertial algorithms with Tikhonov regularization}
\label{intro-dyn-RIPA}
On the basis of the convergence properties of continuous dynamic (TRIGS), one would expect to obtain similar results for the algorithms resulting from its temporal discretization.
To illustrate this, we will do a detailed study of the
associated proximal algorithms, obtained by implicit discretization.
A full study of the associated first-order algorithms would be beyond the scope of this article, and will be the subject of further study.
So,  for $k\geq 1$, consider the discrete dynamic
\begin{equation}\label{eq:basic-discret1}
(x_{k+1}-2x_{k}+x_{k-1})+ \frac{\alpha}{k}(x_{k}-x_{k-1})+\nabla f(x_{k+1}) + \frac{c}{k^2} \xi_{k} =0,
\end{equation}
with time step size  equal to one. We take $\xi_k = x_{k}$, which gives
$$ {\rm (IPATRE)}\quad  \left\{
\begin{array}{l}
y_k= x_k+\alpha_{k }(x_k -  x_{k-1})\\
\rule{0pt}{15pt}
x_{k+1}={ \rm prox}_{f}\left( y_k - \frac{c}{k^2}x_k\right),
\end{array}
\right.$$
where (IPATRE) stands for Inertial Proximal Algorithm with Tikhonov REgularization.
 According to \eqref{eq:basic-discret1} we have
\begin{equation}\label{formx}
x_{k+1}=\a_k(x_{k}-x_{k-1})-\nabla f(x_{k+1}) +\left(1- \frac{c}{k^2}\right) x_k.
\end{equation}

\subsection{Convergence of values}
We have the following result.

\begin{theorem}\label{convergencealgorithm}
Let $(x_k)$ be a sequence generated by $\mbox{\rm(IPATRE)}$. Assume that $\a>3$. Then  for all $s\in\left[\frac12,1\right[$ the following  hold:

\smallskip

\begin{itemize}
\item[(i)] $f(x_k)-\min_{\cH} f=o(k^{-2s})$, $\|x_{k}-x_{k-1}\|=o(k^{-s})$ and $\|\n f(x_k)\|=o(k^{-s})$ as $k\to+\infty.$\vspace{2mm}
\item[(ii)] $\ds\sum_{k=1}^{+\infty} k^{2s-1}(f(x_k)-\min_{\cH} f)<+\infty,$ $\ds\sum_{k=1}^{+\infty} k^{2s-1}\|x_k-x_{k-1}\|^2<+\infty$, $\ds\sum_{k=1}^{+\infty} k^{2s}\|\n f(x_k)\|^2<+\infty$.
\end{itemize}
\end{theorem}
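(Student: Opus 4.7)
The plan is to transpose the continuous Lyapunov analysis of Theorem~\ref{RateConvergenceResult} to the discrete setting, and then upgrade the resulting $O$-rates into the announced $o$-rates via summability, in the spirit of the proof of $o(1/k^2)$ for Nesterov-type schemes under $\alpha>3$.

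First, I would read off the first-order optimality condition for the prox step, namely $\nabla f(x_{k+1}) + x_{k+1} - y_k + (c/k^2) x_k = 0$, which is exactly the discrete constitutive relation \eqref{eq:basic-discret1}, and which will play the role of the ODE (TRIGS) throughout. For each $s\in[1/2,1)$, I would then introduce a rescaled Lyapunov sequence patterned on \eqref{Lyapunov-rate}:
\begin{equation*}
\mathcal{E}_k^s := A_k^s\bigl(f(x_k)-\min_{\cH} f\bigr) + \tfrac{c}{2}B_k^s\|x_k\|^2 + \tfrac{1}{2}\bigl\|\lambda_k^s (x_k - x^*) + \mu_k^s(x_k - x_{k-1})\bigr\|^2,
\end{equation*}
with $A_k^s \sim k^{2s}$, $B_k^s \sim k^{2s-2}$, $\mu_k^s \sim k^s$, and $\lambda_k^s$ selected so that the cross term $\langle x_k - x^*, x_k - x_{k-1}\rangle$ in the increment $\mathcal{E}_{k+1}^s - \mathcal{E}_k^s$ vanishes. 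This parallels the choice $c(t)=K\sqrt{\e(t)}$ made in the continuous analysis via the identity \eqref{def:c}.

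The core computation is the discrete analogue of \eqref{engderiv2-rate}--\eqref{forgronwal11-rate}: one expands $\mathcal{E}_{k+1}^s-\mathcal{E}_k^s$ by Abel summation, eliminates $x_{k+1}-y_k$ through the prox equation, and applies the subgradient inequality for the $(c/k^2)$-strongly convex regularization $f+(c/(2k^2))\|\cdot\|^2$ at $x_{k+1}$ with test point $x^*$ (the discrete counterpart of \eqref{forenergy4-rate}). Using $\alpha>3$ to extract strictly negative coefficients in front of $f(x_k)-f^*$, $\|x_k-x_{k-1}\|^2$, and $\|\nabla f(x_k)\|^2$ (this last quantity arising by rewriting $\nabla f(x_{k+1})$ from the prox equation and comparing consecutive iterates), one arrives at an inequality of the shape
\begin{equation*}
\mathcal{E}_{k+1}^s - \mathcal{E}_k^s + c_1 k^{2s-1}\bigl(f(x_k)-f^*\bigr) + c_2 k^{2s-1}\|x_k-x_{k-1}\|^2 + c_3 k^{2s}\|\nabla f(x_k)\|^2 \le \rho_k,
\end{equation*}
with $\rho_k = O(k^{2s-3})$. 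The upper bound $s<1$ is exactly what renders $\rho_k$ summable; the lower bound $s\ge 1/2$ is what keeps $\lambda_k^s$ positive over the whole range. Telescoping this inequality yields boundedness of $\mathcal{E}_k^s$ (hence the $O(k^{-2s})$ versions of (i)) together with the three summability statements of (ii).

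To upgrade $O(k^{-2s})$ to $o(k^{-2s})$ in (i), I would combine the summability bound $\sum k^{2s-1} a_k<\infty$ (for $a_k$ any of the three nonnegative sequences involved) with a quasi-monotonicity estimate $a_{k+1}\le a_k + O(k^{-2})$, itself a by-product of the same Lyapunov computation; the elementary lemma asserting that $\sum k^{2s-1}a_k<\infty$ together with a suitably almost-decreasing behaviour of $a_k$ forces $k^{2s}a_k\to 0$ then closes the argument. The main obstacle will be the algebraic bookkeeping at the Lyapunov step: the coefficients $A_k^s,B_k^s,\lambda_k^s,\mu_k^s$ must be tuned simultaneously to produce the cancellation of the cross term, the right signs in front of the three nonnegative quantities appearing in (ii), and a summable residue $\rho_k$ uniformly in $s\in[1/2,1)$. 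The Tikhonov scaling $c/k^2$ makes this more delicate than in the unregularized case, since it contributes both a friction-like benefit and an additional time-dependent error that must be absorbed.
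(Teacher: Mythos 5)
Your overall strategy (a discrete Lyapunov sequence mirroring \eqref{Lyapunov-rate}, telescoping, summability of the residue for $s<1$) is the same as the paper's, but two specific points do not survive scrutiny. First, your Lyapunov ansatz omits the device that actually drives the paper's computation: the energy used there is
$E_k=\|a_{k-1}(x_{k-1}-x^*)+b_{k-1}(x_k-x_{k-1}+\nabla f(x_k))\|^2+d_{k-1}\|x_{k-1}\|^2$,
with the gradient placed \emph{inside} the squared norm. This is what makes the implicit prox step collapse, since by \eqref{formx} one has $x_{k+1}-x_k+\nabla f(x_{k+1})=\alpha_k(x_k-x_{k-1})-\frac{c}{k^2}x_k$, so $E_{k+1}$ contains no gradient at all; and it is also the only source of the term $-b_{k-1}^2\|\nabla f(x_k)\|^2$ whose sign yields the third summability statement $\sum k^{2s}\|\nabla f(x_k)\|^2<+\infty$. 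Note also that the paper carries no explicit $A_k(f(x_k)-f^*)$ term in the energy: the weighted function values $\mu_k(f(x_k)-f^*)$ with $\mu_k\sim Ck^{2s}$ are generated a posteriori from the convexity inequalities $\langle\nabla f(x_k),x^*-x_k\rangle\le f^*-f(x_k)$ and $\langle\nabla f(x_k),x_{k-1}-x_k\rangle\le f(x_{k-1})-f(x_k)$ applied to the cross terms. Your ansatz, with $x_k-x_{k-1}$ alone in the velocity slot and an explicit $A_k^s(f(x_k)-f^*)$, is the naive transcription of the continuous energy and would leave you without a clean mechanism for the $\|\nabla f(x_k)\|^2$ estimate in the proximal (implicit) setting, where no global Lipschitz constant of $\nabla f$ is available.

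Second, and more seriously, your $o$-upgrade is based on a false lemma in the stated range. Quasi-monotonicity $a_{k+1}\le a_k+O(k^{-2})$ together with $\sum k^{2s-1}a_k<+\infty$ does \emph{not} imply $k^{2s}a_k\to 0$ once $s>\frac12$: a nonnegative sequence can ramp up from $0$ to height $k^{-2s}$ in about $k^{2-2s}$ steps of size $k^{-2}$, and sparse blocks of this type contribute only $O(k^{1-2s})$ each to $\sum k^{2s-1}a_k$, which is summable over dyadic blocks for $s>\frac12$ while $k^{2s}a_k$ keeps returning to $1$. What is needed is an increment bound summable against the weight $k^{2s}$, i.e.\ $O(k^{2s-3})$-type errors on the \emph{combined} quantity, which is exactly what the paper exploits: it shows that $\sum_k[\Delta_k]_+<+\infty$ for the increments of $E_{k}+\mu_{k-1}(f(x_{k-1})-f^*)+\nu_{k-1}\|x_{k-1}-x^*\|^2$, hence this quantity has a limit, and then uses $\sum_k\frac1k(\cdot)<+\infty$ together with $\sum_k\frac1k=+\infty$ to force that limit to be zero; the individual $o$-rates are read off from the asymptotics $\mu_k\sim Ck^{2s}$, $\nu_k\sim Ck^{2s-2}$. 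You should replace your quasi-monotonicity step by this "limit exists and must vanish" argument applied to the aggregate, not to each sequence separately.
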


\begin{proof} Given $x^*\in\argmin f$, set $f^*=f(x^*)=\min_{\cH} f$.
For $k\ge 2$, consider the discrete energy
\begin{equation}\label{discreteenergy}
E_k :=\|a_{k-1}(x_{k-1}-x^*)+b_{k-1}(x_k-x_{k-1}+\n f(x_k))\|^2+d_{k-1}\|x_{k-1}\|^2,
\end{equation}
where  $a_k=ak^{r-1}, \;  2<a<\a-1$ and $b_k=k^r$, $r\in ]0,1]$. The sequence $(d_k)$ will be defined later. Set shortly  $c_k :=\frac{c}{k^2}.$
Let us develop $E_k$.
\begin{eqnarray}
E_k&=&a_{k-1}^2\|x_{k-1}-x^*\|^2+b_{k-1}^2\|x_k-x_{k-1}\|^2+b_{k-1}^2\|\n f(x_k)\|^2+2a_{k-1}b_{k-1}\<x_k-x_{k-1},x_{k-1}-x^*\> \nonumber\\
&+&2a_{k-1}b_{k-1}\<\n f(x_k),x_{k-1}-x^*\>+2b_{k-1}^2\<\n f(x_k),x_k-x_{k-1}\>+d_{k-1}\|x_{k-1}\|^2.\label{forEk}
\end{eqnarray}
Further
\begin{align*}
&2a_{k-1}b_{k-1}\<x_k-x_{k-1},x_{k-1}-x^*\>=a_{k-1}b_{k-1}(\|x_k-x^*\|^2-\|x_k-x_{k-1}\|^2-\|x_{k-1}-x^*\|^2)\\
&2a_{k-1}b_{k-1}\<\n f(x_k),x_{k-1}-x^*\>=2a_{k-1}b_{k-1}\<\n f(x_k),x_{k}-x^*\>-2a_{k-1}b_{k-1}\<\n f(x_k),x_{k}-x_{k-1}\>.
\end{align*}
Consequently, \eqref{forEk} becomes
\begin{eqnarray}
E_k=a_{k-1}b_{k-1}\|x_k-x^*\|^2+(a_{k-1}^2-a_{k-1}b_{k-1})\|x_{k-1}-x^*\|^2+(b_{k-1}^2-a_{k-1}b_{k-1})\|x_k-x_{k-1}\|^2 \nonumber \\
+b_{k-1}^2\|\n f(x_k)\|^2+2a_{k-1}b_{k-1}\<\n f(x_k),x_{k}-x^*\>+(2b_{k-1}^2-2a_{k-1}b_{k-1})\<\n f(x_k),x_k-x_{k-1}\> \nonumber\\
  +d_{k-1}\|x_{k-1}\|^2 . \hspace{8cm}\label{forEk1}
\end{eqnarray}
Let us proceed similarly with $E_{k+1}$. Let us first observe  that from (\ref{discreteenergy}) we have
$$E_{k+1}=\|a_{k}(x_{k}-x^*)+b_{k}(\a_k(x_k-x_{k-1})-c_k x_k)\|^2+d_{k}\|x_{k}\|^2.$$
Therefore, after development we get
\begin{eqnarray}
E_{k+1}&=&a_k^2\|x_k-x^*\|^2+\a_k^2b_k^2\|x_k-x_{k-1}\|^2+b_k^2c_k^2\|x_k\|^2+2\a_k a_kb_k\<x_k-x_{k-1},x_k-x^*\> \nonumber \\
&&-2\a_kb_k^2c_k\<x_k-x_{k-1},x_k\>-2a_kb_kc_k\<x_k,x_k-x^*\>+d_k\|x_k\|^2.\label{forEk+1}
\end{eqnarray}
Further,
\begin{eqnarray*}
&&2\a_k a_kb_k\<x_k-x_{k-1},x_k-x^*\>=-\a_k a_kb_k(\|x_{k-1}-x^*\|-\|x_k-x_{k-1}\|^2-\|x_k-x^*\|^2) \\
&&-2\a_kb_k^2c_k\<x_k-x_{k-1},x_k\>=\a_kb_k^2c_k(\|x_{k-1}\|^2-\|x_k-x_{k-1}\|^2-\|x_k\|^2)
\\
&&-2a_kb_kc_k\<x_k,x_k-x^*\>=a_kb_kc_k(\|x^*\|^2-\|x_k-x^*\|^2-\|x_k\|^2).
\end{eqnarray*}
Therefore, (\ref{forEk+1}) yields
\begin{align}\label{forEk+11}
E_{k+1}&=(a_k^2+\a_k a_kb_k-a_kb_kc_k)\|x_k-x^*\|^2-\a_k a_kb_k\|x_{k-1}-x^*\|^2
\\
\nonumber& +(\a_k^2b_k^2+\a_k a_kb_k-\a_kb_k^2c_k)\|x_k-x_{k-1}\|^2+(b_k^2c_k^2+d_k-\a_kb_k^2c_k -a_kb_kc_k)\|x_k\|^2\\
\nonumber&+\a_kb_k^2c_k\|x_{k-1}\|^2+a_kb_kc_k\|x^*\|^2.
\end{align}
By combining \eqref{forEk1} and  \eqref{forEk+11}, we obtain
\begin{align}
&E_{k+1}-E_k=(a_k^2+\a_k a_kb_k-a_kb_kc_k-a_{k-1}b_{k-1})\|x_k-x^*\|^2 \nonumber\\
\nonumber&+(-\a_k a_kb_k -a_{k-1}^2+a_{k-1}b_{k-1})\|x_{k-1}-x^*\|^2\\
\nonumber& +(\a_k^2b_k^2+\a_k a_kb_k-\a_kb_k^2c_k-b_{k-1}^2+a_{k-1}b_{k-1})\|x_k-x_{k-1}\|^2\\
\nonumber&+(b_k^2c_k^2+d_k-\a_kb_k^2c_k -a_kb_kc_k)\|x_k\|^2+(\a_kb_k^2c_k-d_{k-1})\|x_{k-1}\|^2-b_{k-1}^2\|\n f(x_k)\|^2\\
&+2a_{k-1}b_{k-1}\<\n f(x_k),x^*-x_{k}\>+(2b_{k-1}^2-2a_{k-1}b_{k-1})\<\n f(x_k),x_{k-1}-x_k\>
+a_kb_kc_k\|x^*\|^2. \label{energdif}
\end{align}
By convexity of $f$, we have
$$\<\n f(x_k),x^*-x_{k}\>\le f^*-f(x_k)\mbox{ and }\<\n f(x_k),x_{k-1}-x_k\>\le f(x_{k-1})-f(x_k).$$
According to the form of $(a_k)$ and $(b_k)$,  there exists  $k_0\ge 2$ such that  $b_k\ge a_k$ for all $k \geq k_0$.
Consequently, $2b_{k-1}^2-2a_{k-1}b_{k-1} \geq0$ which, according to the above convexity inequalities, gives
\begin{eqnarray}
&& \; \;2a_{k-1}b_{k-1}\<\n f(x_k),x^*-x_{k}\>+(2b_{k-1}^2-2a_{k-1}b_{k-1})\<\n f(x_k),x_{k-1}-x_k\> \label{forfrate}\\
&& \leq 2a_{k-1}b_{k-1}(f^* -f(x_k)) +(2b_{k-1}^2-2a_{k-1}b_{k-1})\left[ f(x_{k-1})-f(x_k)  \right] \nonumber\\
&&= -2a_{k-1}b_{k-1}(f(x_k)-f^*) +(2b_{k-1}^2-2a_{k-1}b_{k-1})\left[ (f(x_{k-1})- f^* )- (f(x_k) - f^* )   \right] \nonumber\\
&&= (2b_{k-1}^2-2a_{k-1}b_{k-1})(f(x_{k-1})-f^*) -2b_{k-1}^2 (f(x_k)-f^*) \nonumber\\
&&=(2b_{k-1}^2-2a_{k-1}b_{k-1})(f(x_{k-1})-f^*)-\Big((2b_{k}^2-2a_{k}b_{k})+(2b_{k-1}^2-2b_{k}^2+2a_{k}b_{k})\Big)(f(x_{k})-f^*).\nonumber
\end{eqnarray}
Set $\mu_k:=2b_{k}^2-2a_{k}b_{k}$ and observe that $\mu_k\ge 0$ for all $k\ge k_0$, and $\mu_k \sim C k^{2r}$ (we use $C$ as a generic positive constant).
Let us also introduce $m_k:=2b_{k-1}^2-2b_{k}^2+2a_{k}b_{k}$, and observe that
  $m_k\ge 0$ for all $k\ge k_0$.
Equivalently, let us show that for all $\frac12\le r\le 1$ one has $b_{k}^2-a_{k}b_{k}\le b_{k-1}^2$ for all $k\ge 1$. Equivalently
$ k^{2r}-ak^{2r-1}-(k-1)^{2r} \leq 0 $.
 By convexity of the function
$x\mapsto x^{2r}$, the subgradient inequality gives
$$
(x-1)^{2r} \geq x^{2r} -2r x^{2r -1} \geq x^{2r} -a x^{2r -1},
$$
where the second inequality comes from $  2 r < a $.
Replacing $x$ with $k$ gives the claim.
In addition $m_k \sim Ck^{2r-1}.$\;
Combining \eqref{energdif} and \eqref{forfrate}, we obtain that for all $k\ge k_0$
\begin{align}\label{forfrate1}
&E_{k+1}-E_k+\mu_k(f(x_{k})-f^*)-\mu_{k-1}(f(x_{k-1})-f^*)+m_k(f(x_{k})-f^*)\\
\nonumber& \le(a_k^2+\a_k a_kb_k-a_kb_kc_k-a_{k-1}b_{k-1})\|x_k-x^*\|^2\\
\nonumber&+(-\a_k a_kb_k -a_{k-1}^2+a_{k-1}b_{k-1})\|x_{k-1}-x^*\|^2\\
\nonumber& +(\a_k^2b_k^2+\a_k a_kb_k-\a_kb_k^2c_k-b_{k-1}^2+a_{k-1}b_{k-1})\|x_k-x_{k-1}\|^2\\
\nonumber&+(b_k^2c_k^2+d_k-\a_kb_k^2c_k -a_kb_kc_k)\|x_k\|^2+(\a_kb_k^2c_k-d_{k-1})\|x_{k-1}\|^2-b_{k-1}^2\|\n f(x_k)\|^2\\
\nonumber&+a_kb_kc_k\|x^*\|^2.
\end{align}
Let us now analyze the right hand side of \eqref{forfrate1}.

\smallskip

$i) $ Write the coefficient of  $\|x_k-x^*\|^2$ so as to show a  term similar to  the coefficient of  $\|x_{k-1}-x^*\|^2$. This will prepare the summation of these quantities. This gives
\begin{eqnarray}
&& a_k^2+\a_k a_kb_k-a_kb_kc_k-a_{k-1}b_{k-1}=(\a_{k+1}a_{k+1}b_{k+1} +a_{k}^2-a_{k}b_{k}) \label{anchor1}\\
\nonumber&&+(\a_k a_kb_k-a_kb_kc_k-a_{k-1}b_{k-1}-\a_{k+1}a_{k+1}b_{k+1}+a_{k}b_{k}).
\end{eqnarray}
a) \; By definition, $\a_{k+1}a_{k+1}b_{k+1} +a_{k}^2-a_{k}b_{k}=a(k+1)^{2r-1}-\a a(k+1)^{2r-2}+a^2k^{2r-2}-a k^{2r-1}$. Proceeding as before, let us show that $a(x+1)^{2r-1}-\a a(x+1)^{2r-2}+a^2x^{2r-2}-a x^{2r-1}\le 0$ for $x$ large enough.  By taking $ \demi \leq r \leq 1 $, by convexity of the function
$x\mapsto -x^{2r-1}$, the subgradient inequality gives
$
(2r-1)x^{2r-2} \geq (x+1)^{2r-1} - x^{2r -1}.
$
Therefore,
$$a(x+1)^{2r-1}-a x^{2r-1}-\a a(x+1)^{2r-2}+a^2x^{2r-2}\le a(2r-1)x^{2r-2}-\a a(x+1)^{2r-2}+a^2x^{2r-2}.$$
But $a(2r-1)x^{2r-2}+a^2x^{2r-2}\le \a a(x+1)^{2r-2}$ since $2r+a\le\a+1$ and the claim follows.\\
Therefore, there exists $k_1\ge k_0$ such that for all $\frac12\le r\le 1$ we have
\begin{equation}\label{signxkx}
\a_{k+1}a_{k+1}b_{k+1} +a_{k}^2-a_{k}b_{k}\le 0,\mbox{ for all }k\ge k_1.
\end{equation}
Set $\nu_k:=-\a_{k+1}a_{k+1}b_{k+1} -a_{k}^2+a_{k}b_{k}$.   According to (\ref{signxkx}), $\nu_k\ge 0$ for all $k\ge k_1$, and   $\nu_k \sim Ck^{2r-2}$.

\smallskip

\noindent b) \; Consider now the second term in the right hand side of (\ref{anchor1}):
\begin{eqnarray*}
&&a_k a_kb_k-a_kb_kc_k-a_{k-1}b_{k-1}-\a_{k+1}a_{k+1}b_{k+1}+a_{k}b_{k}\\
&&=2ak^{2r-1}-\a ak^{2r-2}-ack^{2r-3}-a(k-1)^{2r-1}-
a(k+1)^{2r-1}+\a a(k+1)^{2r-2}.
\end{eqnarray*}
 Let us show that  for all $\frac12\le r\le 1$
$$\phi(x,r)=2ax^{2r-1}-\a ax^{2r-2}-acx^{2r-3}-a(x-1)^{2r-1}-a(x+1)^{2r-1}+\a a(x+1)^{2r-2}\le 0$$
for $x$ large enough.
By convexity of the function
$x\mapsto x^{2r-1}-(x-1)^{2r-1}$ (one can easily verify that its  second order derivative is nonnegative), the subgradient inequality gives
$(x+1)^{2r-1}-2x^{2r-1}+(x-1)^{2r-1}\ge (2r-1)(x^{2r-2}-(x-1)^{2r-2})$.
Therefore
\begin{eqnarray*}
\phi(x,r)&=& -a [ (x+1)^{2r-1}-2x^{2r-1}+(x-1)^{2r-1}  ]            -\a ax^{2r-2}-acx^{2r-3}+ \a a(k+1)^{2r-2}\\
&\le& -a [ (2r-1)(x^{2r-2}-(x-1)^{2r-2})  ]-\a ax^{2r-2}-acx^{2r-3}+ \a a(k+1)^{2r-2}\\
&=& a (2r-1)(x-1)^{2r-2}-a(\a+2r-1)x^{2r-2}-acx^{2r-3}+\a a(x+1)^{2r-2}.
\end{eqnarray*}
Similarly, by convexity of the function
$x\mapsto (x-1)^{2r-2}-x^{2r-2}$, the subgradient inequality gives
$2x^{2r-2}-(x+1)^{2r-2}-(x-1)^{2r-2}\ge (2r-2)((x-1)^{2r-3}-x^{2r-3})$.
Therefore, $a\a (x+1)^{2r-2}-a\a x^{2r-2}\le a\a (x^{2r-2}-(x-1)^{2r-2})-a\a (2r-2)((x-1)^{2r-3}-x^{2r-3}).$ Consequently,
$$\phi(x,r)\le  a (2r-1-\a)((x-1)^{2r-2}-x^{2r-2})-a\a (2r-2)((x-1)^{2r-3}-x^{2r-3})-acx^{2r-3}.$$
Finally, by convexity of the function
$x\mapsto x^{2r-2}$, the subgradient inequality gives $(x-1)^{2r-2}-x^{2r-2}\ge -(2r-2)x^{2r-3}$.
Taking into account that $a (2r-1-\a)\le 0$ we get
$$\phi(x,r)\le -a (2r-1-2\a)(2r-2)x^{2r-3}-a\a (2r-2)(x-1)^{2r-3}-acx^{2r-3}.$$
Since $\frac{2\a+1-2r}{\a}>1$ we obtain that $\phi(x,r)\le 0$ for $x>1$.\\
Consequently, there exists  $k_2\ge k_1$ such that for all $\frac12\le r\le 1$
\begin{equation}\label{forstrxk}
\a_k a_kb_k-a_kb_kc_k-a_{k-1}b_{k-1}-\a_{k+1}a_{k+1}b_{k+1}+a_{k}b_{k}\le 0,\mbox{ for all }k\ge k_2.
\end{equation}
Set $n_k:=-\a_k a_kb_k+a_kb_kc_k+a_{k-1}b_{k-1}+\a_{k+1}a_{k+1}b_{k+1}-a_{k}b_{k}$. So  $n_k\ge 0$ for all $k\ge k_2$ and $n_k\sim Ck^{2r-3}.$

\smallskip

$ii)$ \; Let us now examine the coefficient of $\|x_k-x_{k-1}\|^2$. By definition we have
\begin{eqnarray*}
&&\a_k^2b_k^2+\a_k a_kb_k-\a_kb_k^2c_k-b_{k-1}^2+a_{k-1}b_{k-1}\\
&&= k^{2r}-(k-1)^{2r}+(-2\a +a)k^{2r-1}+a(k-1)^{2r-1}+(\a^2 -\a a-c)k^{2r-2}+\a  ck^{2r-3}.
\end{eqnarray*}
Let us show that  for all $\frac12\le r\le 1$
$$\phi(x,r)= x^{2r}-(x-1)^{2r}+(-2\a +a)x^{2r-1}+a(x-1)^{2r-1}+(\a^2 -\a a-c)x^{2r-2}+\a  cx^{2r-3}\le 0,$$
if $x$ is large enough.
By convexity of the function
$x\mapsto x^{2r}-a x^{2r-1}$, the subgradient  inequality gives
$((x-1)^{2r}-a(x-1)^{2r-1})-(x^{2r}-ax^{2r-1})\ge -(2rx^{2r-1}-a(2r-1)x^{2r-2})$.
Therefore, taking into account that $r-\a+a\le1-\a+a\le 0$, we obtain
$$\phi(x,r)\le 2(r-\a+a)x^{2r-1}-a(2r-1)x^{2r-2}+(\a^2 -\a a-c)x^{2r-2}+\a  cx^{2r-3}\le 0,$$
for $x$ large enough.
Consequently, there exist $k_3\ge k_2$ such that for all $\frac12\le r\le 1$
\begin{equation}\label{forspeed}
\a_k^2b_k^2+\a_k a_kb_k-\a_kb_k^2c_k-b_{k-1}^2+a_{k-1}b_{k-1}\le0,\mbox{ for all }k\ge k_3.
\end{equation}
Set $\eta_k:=-\a_k^2b_k^2-\a_k a_kb_k+\a_kb_k^2c_k+b_{k-1}^2-a_{k-1}b_{k-1}$. So  $\eta_k\ge 0$ for all $k\ge k_3$ and $\eta_k\sim Ck^{2r-1}.$

\smallskip

$iii)$ \;\; The coefficient of $\|x_{k-1}\|^2$ is $\a_kb_k^2c_k-d_{k-1}$. We proceed in a similar way as in $i)$,  and write the coefficient of $\|x_k\|^2$ as
$$b_k^2c_k^2+d_k-\a_kb_k^2c_k -a_kb_kc_k=(-\a_{k+1}b_{k+1}^2c_{k+1}+d_{k})+(b_k^2c_k^2+\a_{k+1}b_{k+1}^2c_{k+1}-\a_kb_k^2c_k -a_kb_kc_k).$$
We have
\begin{align*}
b_k^2c_k^2+\a_{k+1}b_{k+1}^2c_{k+1}-\a_kb_k^2c_k -a_kb_kc_k&=c^2k^{2r-4}+ c(k+1)^{2r-2}-\a c(k+1)^{2r-3}\\
&-ck^{2r-2}+\a ck^{2r-3}-ack^{2r-3}.
\end{align*}
Let us show that  for all $\frac12\le r\le 1$
$$\phi(x,r)=c(x+1)^{2r-2}-\a c(x+1)^{2r-3}-cx^{2r-2}+\a cx^{2r-3}-acx^{2r-3}+c^2x^{2r-4}\le 0$$
for $x$ large enough.
Since for $x$ large enough, the function $x\mapsto x^{2r-2}-\a x^{2r-3}$ is convex,  the subgradient inequality gives
$$ (x^{2r-2}-\a x^{2r-3})-( (x+1)^{2r-2}-\a (x+1)^{2r-3})\ge -((2r-2)(x+1)^{2r-3}-\a (2r-3)(x+1)^{2r-4}).$$
Therefore, by taking into account that $r\le 1$, we obtain
$$\phi(x,r)\le (2r-2)c(x+1)^{2r-3}-\a (2r-3)c(x+1)^{2r-4}-acx^{2r-3}+c^2x^{2r-4}\le 0$$
for $x$ large enough.
Consequently,  there exists $k_4\ge k_3$ such that for all $\frac12\le r\le 1$ we have
\begin{equation}\label{signxk}
b_k^2c_k^2+\a_{k+1}b_{k+1}^2c_{k+1}-\a_kb_k^2c_k -a_kb_kc_k\le 0\mbox{ for all }k\ge k_4.
\end{equation}
Let us denote $\sigma_k:=\a_{k+1}b_{k+1}^2c_{k+1}-d_k$ and $s_k:=-b_k^2c_k^2-\a_{k+1}b_{k+1}^2c_{k+1}+\a_kb_k^2c_k +a_kb_kc_k$ and observe that $s_k\ge 0$ for all $k\ge k_4$ and $s_k\sim Ck^{2r-3}.$\\
Combining \eqref{forfrate1}, \eqref{signxkx}, \eqref{forstrxk}, \eqref{forspeed} and \eqref{signxk} we obtain that
for all $k\ge k_4$ and $r\in\left[\frac12,1\right]$ it holds
\begin{align}\label{forfrate2}
&E_{k+1}-E_k+\mu_k(f(x_{k})-f^*)-\mu_{k-1}(f(x_{k-1})-f^*)+m_k(f(x_{k})-f^*)\\
\nonumber&+\nu_k\|x_k-x^*\|^2-\nu_{k-1}\|x_{k-1}-x^*\|^2+n_k\|x_k-x^*\|^2\\
\nonumber&+\sigma_k\|x_k\|^2-\sigma_{k-1}\|x_{k-1}\|^2+s_k\|x_k\|^2\\
\nonumber& +\eta_k\|x_k-x_{k-1}\|^2+b_{k-1}^2\|\n f(x_k)\|^2\le a_kb_kc_k\|x^*\|^2.
\end{align}
Finally, take $d_{k-1}=\frac12 \a_kb_k^2c_k.$ Then, $\sigma_k=\frac12\a_{k+1}b_{k+1}^2c_{k+1}\sim C k^{2r-2}$, $\sigma_k\ge 0$ for all $k\ge k_5=\max(\a-1, k_4).$ Further, $\mu_k, m_k, \nu_k, n_k, s_k$ and $ \eta_k$ are nonnegative for all $k\ge k_5$ and $r\in\left[\frac12,1\right]$.

\smallskip

Assume now that $\frac12\le r<1.$
According to $\sum_{k\ge k_5}a_kb_kc_k\|x^*\|^2=ac\|x^*\|^2\sum_{k\ge k_5}k^{2r-3}=C<+\infty$, by summing up \eqref{forfrate2} from $k=k_5$ to $k=n>k_5$, we obtain that there exists $C_1>0$ such that
\begin{eqnarray*}
&&E_{n+1}\le C_1,\\
&&\mu_n(f(x_{n})-f^*)\le C_1,\mbox{ hence }f(x_{n})-f^*=\mathcal{O}(n^{-2r}),\\
&&\sum_{k\ge k_5}m_k(f(x_{k})-f^*)\le C_1,\mbox{ hence }\sum_{k\ge 1}k^{2r-1}(f(x_{k})-f^*)<+\infty,\\
&&\nu_k\|x_k-x^*\|^2\le C_1,\mbox{ hence }\|x_n-x^*\|=\mathcal{O}(n^{1-r}),\\
&&\sum_{k\ge k_5} n_k\|x_k-x^*\|^2\le C_1,\mbox{ hence }\sum_{k\ge 1} k^{2r-3}\|x_k-x^*\|^2<+\infty,\\
&&\sigma_k\|x_k\|^2\le C_1,\mbox{ hence }\|x_n\|=\mathcal{O}(n^{1-r}),\\
&&\sum_{k\ge k_5} s_k\|x_k\|^2\le C_1,\mbox{ hence }\sum_{k\ge 1} k^{2r-3}\|x_k\|^2<+\infty,\\
&&\sum_{k\ge k_5} \eta_k\|x_k-x_{k-1}\|^2\le C_1,\mbox{ hence }\sum_{k\ge 1}k^{2r-1}\|x_k-x_{k-1}\|^2<+\infty\\
&&\sum_{k\ge k_5} b_{k-1}^2\|\n f(x_k)\|^2\le C_1,\mbox{ hence }\sum_{k\ge 1}k^{2r}\|\n f(x_k)\|^2<+\infty.
\end{eqnarray*}
Since $\sum_{k\ge 1}k^{2r}\|\n f(x_k)\|^2<+\infty$, we have $\|\n f(x_n)\|=o(n^{-r}).$ Combining this property with $E_{n+1}\le C_1$ yields $\sup_{n\ge 1}\|an^{r-1}(x_n-x^*)+n^{r}(x_{n+1}-x_{n})\|+\frac{c}{2}\left(1-\frac{\a}{n}\right)n^{2r-2}\|x_{n-1}\|^2<+\infty$.\\
Let us show now, that $f(x_n)-f^*= o(n^{-2r})$ and $\|x_n-x_{n-1}\|=o(n^{-r}).$
From \eqref{forfrate2} we get
$$\sum_{k\ge 1} [(E_{k+1}+\mu_k (f(x_{k})-f^*)+\nu_k\|x_k-x^*\|^2)-(E_{k}+\mu_{k-1} (f(x_{k-1})-f^*)+\nu_{k-1}\|x_{k-1}-x^*\|^2)]_+<+\infty.$$
Therefore, the following  limit exists
$$\lim_{k\to+\infty}(\|ak^{r-1}(x_k-x^*)+k^{r}(x_{k+1}-x_{k})\|^2+d_k\|x_k\|^2+\mu_k (f(x_{k})-f^*)+\nu_k\|x_k-x^*\|^2).$$
Note that $d_k \sim C k^{2r-2},\,\mu_k\sim C k^{2r}$ and $\nu_k\sim C k^{2r-2}.$ Further, we have\\
$\sum_{k\ge 1} k^{2r-3}\|x_k-x^*\|^2<+\infty$, $\sum_{k\ge 1} k^{2r-1}\|x_k-x_{k-1}\|^2<+\infty$, $\sum_{k\ge 1}k^{2r-1}(f(x_{k})-f^*)< +\infty$ and $\sum_{k\ge 1} k^{2r-3}\|x_k\|^2<+\infty$, hence
$$\sum_{k\ge 1}\frac{1}{k}(\|ak^{r-1}(x_k-x^*)+k^{r}(x_{k+1}-x_{k})\|^2+d_k\|x_k\|^2+\mu_k (f(x_{k})-f^*)+\nu_k\|x_k-x^*\|^2)<+\infty.$$
Since $\sum_{k\ge 1}\frac{1}{k}=+\infty$ we get
$$\lim_{k\to+\infty}(\|ak^{r-1}(x_k-x^*)+k^{r}(x_{k+1}-x_{k})\|^2+d_k\|x_k\|^2+\mu_k (f(x_{k})-f^*)+\nu_k\|x_k-x^*\|^2)=0$$
and the claim follows.
\end{proof}
\begin{remark}
The convergence rate of the values is $f(x_k)-\min_{\cH} f=o(k^{-2s})$ for any $0 <s <1$.
Practically it is as good as the rate $ f\left(x(t)\right) - \min_{\cH} f =O\left(\frac{1}{t^2}\right) $ obtained for the continuous dynamic.
\end{remark}

\subsection{Strong convergence to the minimum norm solution}
\begin{theorem}\label{strconvergencealgorithm}
Take $\a>3$. Let $(x_k)$ be a sequence generated by {\rm(IPATRE)}. Let $x^*$ be the minimum norm element of $\argmin f$. Then, $\liminf_{k\to+\infty}\|x_k-x^*\|=0$. Further, $(x_k)$ converges strongly to $x^*$ whenever $(x_k)$ is in the interior  of the ball $B(0,\|x^*\|)$ for $k$ large enough,  or $(x_k)$ is in the complement of the ball $B(0,\|x^*\|)$ for $k$ large enough.
\end{theorem}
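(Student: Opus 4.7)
The plan is to mirror the three-case dichotomy of Theorem~\ref{StrongConvergence} in discrete time, writing $f^\ast:=\min_\cH f$. The three configurations are: \textbf{(I)} $\|x_k\|\ge\|x^\ast\|$ for all $k$ large; \textbf{(II)} $\|x_k\|<\|x^\ast\|$ for all $k$ large; \textbf{(III)} both inequalities occur for arbitrarily large $k$. Strong convergence will be proved in cases~I and~II, and the $\liminf$ statement will follow by reducing case~III to case~II along a suitable subsequence.

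\textbf{Case~II.} This case transplants almost verbatim from the continuous proof. Theorem~\ref{convergencealgorithm} already gives boundedness of $(x_k)$ and $f(x_k)\to f^\ast$. Any weak cluster point $\bar x$ then satisfies $f(\bar x)\le f^\ast$ by weak lower semicontinuity of $f$, hence $\bar x\in\argmin f$, and $\|\bar x\|\le\|x^\ast\|$ by weak lower semicontinuity of the norm combined with $\|x_k\|<\|x^\ast\|$. Therefore $\bar x=x^\ast$ is the unique weak cluster point, so $x_k\rightharpoonup x^\ast$; sandwiching yields $\|x_k\|\to\|x^\ast\|$, and this upgrades to strong convergence in the Hilbert space.

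\textbf{Case~I.} This is where the substantive work lies. Following the continuous proof, I would introduce the regularized function $f_k(x):=f(x)+\frac{c}{2k^2}\|x\|^2$ and its minimizer $x_{\e_k}$, together with the classical Tikhonov properties $\|x_{\e_k}\|\le\|x^\ast\|$ and $x_{\e_k}\to x^\ast$. Strong convexity of $f_k$ delivers the key lower bound
$$f_k(x_k)-f_k(x^\ast)\ge\tfrac{c}{2k^2}\bigl(\|x_k-x_{\e_k}\|^2+\|x_{\e_k}\|^2-\|x^\ast\|^2\bigr),$$
so it will suffice to match it with an upper bound decaying strictly faster than $1/k^2$. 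To that end I plan to build a discrete Lyapunov functional of the form
$$E_k:=f_k(x_k)-f_k(x^\ast)+\tfrac{1}{2}\bigl\|\,a_{k-1}(x_{k-1}-x^\ast)+b_{k-1}\bigl(x_k-x_{k-1}+\n f(x_k)\bigr)\bigr\|^2,$$
with $a_k,b_k$ scaled as in Theorem~\ref{convergencealgorithm} and $K\in\bigl(\tfrac{\a+1}{2},\a-1\bigr)$ chosen so that the sign conditions from that proof are preserved. The target is a discrete dissipation inequality of the form
$$E_{k+1}-E_k+\tfrac{\a+1-K}{k}\,E_k\le\tfrac{c(\a-1-2K)}{2k^3}\bigl(\|x_k\|^2-\|x^\ast\|^2\bigr)+r_k,$$
with summable residual $r_k$ and coefficient $\a-1-2K<0$. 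Under the assumption $\|x_k\|\ge\|x^\ast\|$ the first right-hand term is non-positive, and multiplying through by $k^{\a+1-K}$ and performing the discrete Gronwall argument used in Theorem~\ref{convergencealgorithm} will yield $E_k=\mathcal O\bigl(k^{-(\a+1-K)}\bigr)$. Since $\a+1-K>2$, plugging this into the Tikhonov lower bound above gives
$$\|x_k-x_{\e_k}\|^2+\|x_{\e_k}\|^2-\|x^\ast\|^2\le\frac{C}{k^{\a-1-K}}\to 0,$$
which together with $x_{\e_k}\to x^\ast$ forces $x_k\to x^\ast$. The hard part will be executing this discrete calculation: compared with the rate analysis of Theorem~\ref{convergencealgorithm}, the Tikhonov cross-terms $\|x_k\|^2-\|x^\ast\|^2$ no longer cancel cleanly against the convexity inequality for $f$, and the $\n f(x_k)$ correction inherited from the proximal step must be absorbed using the summability $\sum k^{2s}\|\n f(x_k)\|^2<+\infty$ from Theorem~\ref{convergencealgorithm}. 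The convexity-of-powers estimates $(k-1)^r\ge k^r-ak^{r-1}$ already exploited there should again provide the right discrete-calculus bookkeeping.

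\textbf{Case~III.} In the continuous setting one invoked the intermediate value theorem to extract times $t_n$ with $\|x(t_n)\|=\|x^\ast\|$. In discrete time I would replace this by a crossing-subsequence argument powered by the step-size estimate $\|x_k-x_{k-1}\|=o(k^{-s})$ of Theorem~\ref{convergencealgorithm}. Since both $\|x_k\|<\|x^\ast\|$ and $\|x_k\|\ge\|x^\ast\|$ occur for arbitrarily large $k$, consecutive membership in the two regions cannot stabilise; hence there exist infinitely many indices $k_n$ at which the sequence crosses the sphere, say $\|x_{k_n-1}\|<\|x^\ast\|\le\|x_{k_n}\|$. The reverse triangle inequality then gives
$$0\le\|x_{k_n}\|-\|x^\ast\|\le\|x_{k_n}\|-\|x_{k_n-1}\|\le\|x_{k_n}-x_{k_n-1}\|=o(k_n^{-s}),$$
so $\|x_{k_n}\|\to\|x^\ast\|$. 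Running the weak-cluster-point argument of case~II along $(x_{k_n})$ identifies $x^\ast$ as its unique weak cluster point, and the norm convergence then upgrades this to strong convergence $x_{k_n}\to x^\ast$. Therefore $\liminf_k\|x_k-x^\ast\|=0$, completing the proof.
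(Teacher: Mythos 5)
Your Cases II and III are sound and essentially match the paper: Case II is the standard weak-cluster-point argument (with boundedness coming from the hypothesis $\|x_k\|<\|x^\ast\|$ rather than from Theorem \ref{convergencealgorithm}, which only gives $\|x_k\|=\mathcal O(k^{1-r})$), and your crossing-index variant of Case III works, though the paper gets away with less: it simply extracts a subsequence lying strictly inside the ball and reruns Case II on it, with no need for the step-size estimate.

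The gap is in Case I. Your target inequality
$$E_{k+1}-E_k+\tfrac{\a+1-K}{k}E_k\le\tfrac{c(\a-1-2K)}{2k^3}\bigl(\|x_k\|^2-\|x^\ast\|^2\bigr)+r_k$$
with a merely \emph{summable} residual $r_k$ cannot deliver what you need. First, summability is not enough: after multiplying by the discrete Gronwall weight $\prod_j\bigl(1+\tfrac{\a+1-K}{j}\bigr)\sim k^{\a+1-K}$ and summing, you get $k^{\a+1-K}E_k\lesssim C+\sum_{j\le k}j^{\a+1-K}r_j$, and for a general summable $r_j$ the right-hand sum can be of order $k^{\a+1-K}$, giving only $E_k=\mathcal O(1)$; even $r_j\sim j^{-3}$ gives only $E_k=\mathcal O(k^{-2})$, whereas your Tikhonov lower bound $f_k(x_k)-f_k(x^\ast)\ge\frac{c}{2k^2}(\|x_k-x_{\e_k}\|^2+\|x_{\e_k}\|^2-\|x^\ast\|^2)$ requires $E_k=o(k^{-2})$ in order to force $\|x_k-x_{\e_k}\|\to0$. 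Second, and more seriously, the actual discrete computation does not produce a summable residual at the scaling you need: to reach $o(k^{-2})$ you must take the weights at the endpoint $r=1$ (i.e. $b_k=k$), and there the leftover term coming from the Tikhonov regularization is $a_kb_kc_k\|x^\ast\|^2\sim \frac{ac}{k}\|x^\ast\|^2$, which is not summable and cannot be discarded; it has to be \emph{absorbed} into the nonpositive part of the right-hand side. This is exactly what the paper's proof does: it augments the energy with a term $d_{k-1}\|x_{k-1}\|^2$, chooses $d_k=\frac{(a+2-\a)c}{2k}$ (which forces the extra constraint $\max(2,\a-2)<a<\a-1$ on the parameter $a$), and verifies three sign inequalities ensuring that the combination of the $\|x^\ast\|^2$ term with $\sigma_k$, $s_k$ and the $c_k(\|x_k\|^2-\|x^\ast\|^2)$ terms is nonpositive precisely because $\|x_k\|\ge\|x^\ast\|$. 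Until you carry out this absorption, Case I is not proved. It is also worth noting that the paper's endgame is simpler than yours: at $r=1$ the coefficient $\nu_k$ of $\|x_k-x^\ast\|^2$ in the Lyapunov quantity is bounded below by a positive constant, so the conclusion $\nu_k\|x_k-x^\ast\|^2\to0$ (obtained from the existence of the limit of that quantity together with the divergence of $\sum 1/k$) yields $x_k\to x^\ast$ directly, with no need for the regularization path $x_{\e_k}$ and the strong-convexity lower bound at all.
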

\begin{proof}
{\bf Case I.}
Assume that  there exists $k_0\in\N$ such that $\|x_k\|\ge \|x^*\|$ for all $k\ge k_0.$
Set $c_k=\frac{c}{k^2},$ and define $f_{c_k}(x):=f(x)+\frac{c}{2k^2}\|x\|^2$.
Consider  the energy function defined in \eqref{discreteenergy} with $r=1$, that is $a_k=a$ and $b_k=k^2$, where we assume that $\max(2,\a-2)<a<\a-1.$ Then,
$$E_k=\|a(x_{k-1}-x^*)+(k-1)^2(x_k-x_{k-1}+\n f(x_k))\|^2+d_{k-1}\|x_{k-1}\|^2,$$
where the sequence  $(d_k)$ will be defined later. Next, we introduce another energy functional
 \begin{equation}
\mathcal{E}_k =\frac12 c_{k-1}(\|x_{k-1}\|^2-\|x^*\|^2)+\|a(x_{k-1}-x^*)+(k-1)^2(x_k-x_{k-1}+\nabla f(x_k))\|^2 + d_{k-1}\|x_{k-1}\|^2. \label{discstrenergfunc}
\end{equation}
Note that $\mathcal{E}_k=\frac12c_{k-1}(\|x_{k-1}\|^2-\|x^*\|^2)+E_k$.
Then,
\begin{align}\label{discstrenergfunc1}
\mathcal{E}_{k+1}-\mathcal{E}_k=\frac12c_{k}(\|x_{k}\|^2-\|x^*\|^2)-\frac12c_{k-1}(\|x_{k-1}\|^2-\|x^*\|^2)+E_{k+1}-E_k.
\end{align}
%
According to \eqref{forfrate2}, there exists $k_1\ge k_0$ such that for all $k\ge k_1$
\begin{align}\label{forfratestr}
&\mathcal{E}_{k+1}-\mathcal{E}_k+\mu_k(f(x_{k})-f^*)-\mu_{k-1}(f(x_{k-1})-f^*)+m_k(f(x_{k})-f^*)\\
\nonumber&+\nu_k\|x_k-x^*\|^2-\nu_{k-1}\|x_{k-1}-x^*\|^2+n_k\|x_k-x^*\|^2\\
\nonumber& +\eta_k\|x_k-x_{k-1}\|^2+b_{k-1}^2\|\n f(x_k)\|^2\leq
-\sigma_k\|x_k\|^2+\sigma_{k-1}\|x_{k-1}\|^2-s_k\|x_k\|^2\\
\nonumber& +\frac12c_{k}(\|x_{k}\|^2-\|x^*\|^2)-\frac12c_{k-1}(\|x_{k-1}\|^2-\|x^*\|^2)+ a_kb_kc_k\|x^*\|^2.
\end{align}
Adding $\frac12(\mu_k+m_k) c_k(\|x_k\|^2-\|x^*\|^2)-\frac12\mu_{k-1} c_{k-1}(\|x_{k-1}\|^2-\|x^*\|^2)$ to both side of \eqref{forfratestr}
we get
\begin{eqnarray}
&&\mathcal{E}_{k+1}-\mathcal{E}_k+\mu_k(f_{c_k}(x_{k})-f_{c_k}(x^*))-\mu_{k-1}(f_{c_{k-1}}(x_{k-1})-f_{c_{k-1}}(x^*))+m_k(f_{c_k}(x_{k})-f_{c_k}(x^*))\nonumber \\
&&+\nu_k\|x_k-x^*\|^2-\nu_{k-1}\|x_{k-1}-x^*\|^2+n_k\|x_k-x^*\|^2  +\eta_k\|x_k-x_{k-1}\|^2 +b_{k-1}^2\|\n f(x_k)\|^2 \label{forfratestr1}\\
\nonumber&&  \leq
-\sigma_k\|x_k\|^2+\sigma_{k-1}\|x_{k-1}\|^2-s_k\|x_k\|^2\\
\nonumber&& +\frac12(\mu_k+m_k+1)c_{k}(\|x_{k}\|^2-\|x^*\|^2)-\frac12(\mu_{k-1}+1)c_{k-1}(\|x_{k-1}\|^2-\|x^*\|^2)+ a_kb_kc_k\|x^*\|^2.
\end{eqnarray}
The right hand side of \eqref{forfratestr1} can be written as
\begin{align*}
&\left(\frac12(\mu_k+m_k+1)c_{k}-\sigma_k-s_k\right)(\|x_{k}\|^2-\|x^*\|^2)\\
&+\left(-\frac12(\mu_{k-1}+1)c_{k-1}+\sigma_{k-1}\right)(\|x_{k-1}\|^2-\|x^*\|^2)
+(a_kb_kc_k-\sigma_k-s_k+\sigma_{k-1})\|x^*\|^2.
\end{align*}
In this case we have  $\mu_k=2b_{k}^2-2a_{k}b_{k}=2k^2-2ak$  and $m_k=2b_{k-1}^2-2b_{k}^2+2a_{k}b_{k}=2(a-2)k+2$. Further,
$\sigma_k=\a_{k+1}b_{k+1}^2c_{k+1}-d_k=c-\frac{\a c}{k+1}-d_k$ and $s_k=b_k^2c_k^2-\a_{k+1}b_{k+1}^2c_{k+1}+\a_kb_k^2c_k +a_kb_kc_k=\frac{\a c}{k+1}+\frac{c(a-\a)}{k}+\frac{c^2}{k^2}.$
Now, take $d_k=\frac{(a+2-\a)c}{2k}\ge 0$ and an easy computation gives that there exists $k_2\ge k_1$ such that for all $k\ge k_2$ one has
$$\frac12(\mu_k+m_k+1)c_{k}-\sigma_k-s_k=-\frac{(a+2-\a)c}{2k}+\frac{2c^2-3c}{2k^2}\le0,$$
$$-\frac12(\mu_{k-1}+1)c_{k-1}+\sigma_{k-1}=\frac{c(a-2-\a)}{2(k-1)}+\frac{\a c}{k(k-1)}-\frac{c}{2(k-1)^2}\le 0$$
$$a_kb_kc_k-\sigma_k-s_k+\sigma_{k-1}=\frac{(a+2-\a)c}{2k}-\frac{(a+2-\a)c}{2(k-1)}-\frac{c^2}{k^2}\le 0.$$
Now, since by assumption  $\|x_k\|\ge \|x^*\|$ for $k\ge k_0$, we get that the right hand side of \eqref{forfratestr1} is nonpositive for all $k\ge k_2.$ Hence, for all $k\ge k_2$ we have
\begin{align}
&\mathcal{E}_{k+1}-\mathcal{E}_k+\mu_k(f_{c_k}(x_{k})-f_{c_k}(x^*))-\mu_{k-1}(f_{c_{k-1}}(x_{k-1})-f_{c_{k-1}}(x^*))+m_k(f_{c_k}(x_{k})-f_{c_k}(x^*))\nonumber \\
&+\nu_k\|x_k-x^*\|^2-\nu_{k-1}\|x_{k-1}-x^*\|^2+n_k\|x_k-x^*\|^2 +\eta_k\|x_k-x_{k-1}\|^2+b_{k-1}^2\|\n f(x_k)\|^2\le 0. \label{forfratestr2}
\end{align}
Note that $\nu_k \sim C$. Therefore, from \eqref{forfratestr2}, similarly as in the proof of Theorem \ref{convergencealgorithm}, we deduce that
$\|x_k-x^*\|$ is bounded, and therefore $(x_k)$  is bounded.
Further,
$$\lim_{k\to+\infty}(\|a(x_k-x^*)+k(x_{k+1}-x_{k})\|^2+\mu_k (f_{c_k}(x_{k})-f_{c_k}(x^*))+\nu_k\|x_k-x^*\|^2)=0,$$
that is, $\lim_{k\to+\infty}\nu_k\|x_k-x^*\|^2=0$ and hence \;
$\lim_{k\to+\infty}x_k=x^*.$

\smallskip

{\bf Case II.} Assume that there exists $k_0\in \N$ such that $\|x_k\|<\|x^*\|$ for all $k\ge k_0.$ From there we get that $(x_k)$ is bounded.  Now, take $\bar{x} \in \mathcal{H}$ a weak sequential cluster point of  $(x_k),$ which exists since   $(x_k)$ is bounded. This means that there exists a sequence $\left(k_{n}\right)_{n \in \mathbb{N}} \subseteq[k_0,+\infty)\cap \N$ such that $k_{n} \to +\infty$ and $x_{k_{n}}$ converges weakly to $\bar{x}$ as $n \to +\infty$. Since $f$ is weakly lower semicontinuous, according to Theorem \ref{convergencealgorithm} we have
$
f(\bar{x}) \leq \liminf _{n \rightarrow+\infty} f\left(x_{k_{n}}\right)=\min f \, ,$ hence $\bar{x} \in \operatorname{argmin} f.$
Since the norm is weakly lower semicontinuous, we deduce that
$$
\begin{array}{c}
\|\bar{x}\| \leq \liminf _{n \rightarrow+\infty}\left\|x_{k_{n}}\right\| \leq\left\|x^\ast \right\|.
\end{array}
$$
According to the definition of $x^\ast$, we get $\bar{x}=x^{*}.$ Therefore $(x_k)$ converges weakly to $x^\ast$. So
$$
\left\|x^\ast \right\| \leq \liminf _{k \rightarrow+\infty}\|x_k\| \leq \limsup _{t \rightarrow+\infty}\|x_k\| \leq\left\|x^\ast \right\|.$$
Therefore, we have
$ \lim _{k \rightarrow+\infty}\|x_k\|=\left\|x^\ast \right\|.$
From  the previous relation and the fact that $x_k\rightharpoonup x^\ast$ as $k \to +\infty,$ we obtain the strong convergence, that is
$
\lim _{k \rightarrow+\infty} x_k=x^\ast.$

\smallskip

{\bf Case  III.} Suppose that for every $k \geq k_{0}$ there exists $l \geq k$ such that $\left\|x^\ast \right\|>\|x_l\|$, and suppose also there exists $m \geq k$ such that $\left\|x^{*}\right\| \leq\|x_m\|$.
So, let $k_1\ge k_0$ and $l_1\ge k_1$ such that $\left\|x^\ast \right\|>\|x_{l_1}\|.$
Let $k_2>l_1$ and $l_2\ge k_2$ such that $\left\|x^\ast \right\|>\|x_{l_2}\|.$ Continuing the process,  we obtain $(x_{l_n})$, a subsequence of $(x_k)$ with the property that $\|x_{l_n}\|<\|x^*\|$ for all $n\in\N.$ By reasoning as in {\bf Case II}, we obtain that
$\lim _{n \rightarrow+\infty} x_{l_n}=x^\ast.$
Consequently, \;
$\liminf_{k \rightarrow+\infty} \|x_k-x^\ast\|=0.$
\end{proof}

\subsection{Non-smooth case}\label{non-smooth-prox}
Let us extend the results of the previous sections to the case of a proper lower semicontinuous and convex function $f: \cH \to \R \cup \left\lbrace +\infty \right\rbrace$.
We rely on the basic properties of the Moreau envelope $f_{\lambda}: \cH \to \R$  ($\lambda$ is a positive real parameter), which is defined by
\[
f_{\lambda} (x) = \min_{z \in \cH} \left\lbrace f (z) + \frac{1}{2 \lambda} \| z - x\| ^2   \right\rbrace, \quad \text{for any $x\in \cH$.}
\]
Recall that  $f_{\lambda} $ is a convex differentiable function, whose gradient is $\lambda^{-1}$-Lipschitz continuous, and such that $\min_{\cH} f= \min_{\cH} f_{\lambda}$, \; $\argmin_{\cH} f_{\lambda} = \argmin_{\cH} f$.
The interested reader may refer to \cite{BC,Bre1} for a comprehensive treatment of the Moreau envelope in a Hilbert setting. Since the set of minimizers is preserved by taking the Moreau envelope, the idea is to replace $f$ by $f_{\lambda} $ in the previous algorithm, and take advantage of the fact that $f_{\lambda} $ is continuously differentiable.  Then, algorithm ${\rm (IPATRE)}$  applied to $f_{\lambda}$ now reads (recall that $\alpha_k =1 -\frac{\alpha}{k}$)
\[
\begin{array}{l}
{\rm (IPATRE)} \quad  \left\{
\begin{array}{l}
y_k= x_k+\alpha_{k }(x_k -  x_{k-1})\\
\rule{0pt}{15pt}
x_{k+1}={ \rm prox}_{f_{\lambda}}\left( y_k - \frac{c}{k^2}x_k\right).
\end{array}
\right.
\end{array}
\]
By applying Theorems \ref{convergencealgorithm} and  \ref{strconvergencealgorithm},  we obtain fast convergence of the sequence $(x_k)$ to the element of minimum norm of $f$.
Thus, we just need to formulate these results in terms of $f$ and its proximal mapping. This is straightforward thanks to the following formulae from proximal calculus \cite{BC}:

\begin{enumerate}
\item $f_{\lambda} (x)= f(\prox_{ \lambda f}(x)) + \frac{1}{2\lambda} \|x-\prox_{\lambda f}(x)\| ^2$.
\item $\nabla f_{\lambda} (x)= \frac{1}{\lambda} \left( x-\prox_{ \lambda f}(x) \right)$.
\item $\prox_{ \theta f_{\lambda}}(x) = \frac{\lambda}{\lambda +\theta}x + \frac{\theta}{\lambda +\theta}\prox_{ (\lambda + \theta)f}(x).$
\end{enumerate}
\noindent We obtain the following relaxed inertial proximal algorithm (NS stands for non-smooth):
\begin{eqnarray*}
\begin{array}{rcl}
{\rm \mbox{(IPATRE-NS)}}  \quad
\begin{cases}
y_k=   x_{k} + ( 1- \frac{\alpha}{k}) ( x_{k}  - x_{k-1})  \\
x_{k+1} =  \frac{\lambda}{1+\lambda}\left(  y_k - \frac{c}{k^2}x_k \right) + \frac{1}{1+\lambda}\prox_{(\lambda+1) f} \left(  y_k - \frac{c}{k^2}x_k \right).
\end{cases}
\end{array}
\vspace{2mm}
\end{eqnarray*}
\begin{theorem}
Let $f: \cH \to \R \cup \left\lbrace +\infty \right\rbrace$ be a convex, lower semicontinuous, proper function.  Assume that $\a>3$. Let  $(x_k)$ be a sequence generated by
$ \mbox{\rm(IPATRE-NS)}$. Then  for all $s\in\left[\frac12,1\right[$, we have:

\smallskip

(i)  $f({\prox}_{ \lambda f}(x_k))-\min_{\cH} f=o(k^{-2s})$, $\|x_{k}-x_{k-1}\|=o(k^{-s})$,

\smallskip

\; \; $\|x_k - {\prox}_{ \lambda f}(x_k))\|=o(k^{-s})$ as $k\to+\infty.
$\vspace{2mm}

(ii)  $\ds\sum_{k=1}^{+\infty} k^{2s-1}(f({\prox}_{ \lambda f}(x_k))-\min_{\cH} f)<+\infty,$ \;  $\ds\sum_{k=1}^{+\infty} k^{2s-1}\|x_k-x_{k-1}\|^2<+\infty$,

\hspace{6mm}$\ds\sum_{k=1}^{+\infty} k^{2s}\|x_k - {\prox}_{ \lambda f}(x_k))\|^2< +\infty$.

(iii) $\liminf_{k\to+\infty}\|x_k-x^*\|=0$. Further, $(x_k)$ converges strongly to $x^*$ the element of minimum norm of
$\argmin f$, if $(x_k)$ is in the interior  of the ball $B(0,\|x^*\|)$ for $k$ large enough,  or if $(x_k)$ is in the complement of the ball $B(0,\|x^*\|)$ for $k$ large enough.
\end{theorem}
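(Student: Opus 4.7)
The plan is to reduce the non-smooth case to the smooth case by observing that $\mbox{(IPATRE-NS)}$ is nothing but $\mbox{(IPATRE)}$ applied to the Moreau envelope $f_\lambda$, which is convex, continuously differentiable, and has $\lambda^{-1}$-Lipschitz gradient (so hypothesis $(H_1)$ is met). Since $\min f_\lambda = \min f$ and $\argmin f_\lambda = \argmin f$, the minimum norm element $x^*$ is the same for $f$ and $f_\lambda$, so $(H_2)$ is preserved. The first step is to verify this identification: by the proximal calculus identity (3) with $\theta = 1$,
\[
\prox_{f_\lambda}(z) = \frac{\lambda}{\lambda+1}\, z + \frac{1}{\lambda+1}\,\prox_{(\lambda+1)f}(z),
\]
and substituting $z = y_k - \frac{c}{k^2}x_k$ makes the update rule of $\mbox{(IPATRE-NS)}$ coincide exactly with $x_{k+1} = \prox_{f_\lambda}\bigl(y_k - \frac{c}{k^2}x_k\bigr)$.

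The second step is to translate the conclusions of Theorem \ref{convergencealgorithm} applied to $f_\lambda$ back to statements about $f$ via the Moreau envelope identities (1) and (2). Writing
\[
f_\lambda(x_k) - \min_{\cH} f = \bigl(f(\prox_{\lambda f}(x_k)) - \min_{\cH} f\bigr) + \frac{1}{2\lambda}\|x_k - \prox_{\lambda f}(x_k)\|^2,
\]
and noting that both summands on the right are nonnegative, the rate $f_\lambda(x_k) - \min f = o(k^{-2s})$ automatically splits into $f(\prox_{\lambda f}(x_k)) - \min f = o(k^{-2s})$ and $\|x_k - \prox_{\lambda f}(x_k)\| = o(k^{-s})$. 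The latter is consistent with $\n f_\lambda(x_k) = \lambda^{-1}(x_k - \prox_{\lambda f}(x_k))$, so the rate $\|\n f_\lambda(x_k)\| = o(k^{-s})$ from Theorem \ref{convergencealgorithm} encodes exactly the same information. The rate $\|x_k - x_{k-1}\| = o(k^{-s})$ carries over unchanged since the iterates themselves are the same. The summability statements in (ii) are obtained in the same way: $\sum k^{2s-1}(f_\lambda(x_k) - \min f) < +\infty$ splits into the desired two series by the identity above, while $\sum k^{2s}\|\n f_\lambda(x_k)\|^2 < +\infty$ becomes $\lambda^{-2}\sum k^{2s}\|x_k - \prox_{\lambda f}(x_k)\|^2 < +\infty$.

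Finally, since the minimum norm elements of $\argmin f$ and $\argmin f_\lambda$ coincide, part (iii) follows at once from Theorem \ref{strconvergencealgorithm} applied to $f_\lambda$: both the $\liminf$ statement and the strong convergence under the dichotomy with respect to $B(0,\|x^*\|)$ transfer verbatim. No substantial analytic obstacle arises in this argument; the only nontrivial verification is the formal identification of $\mbox{(IPATRE-NS)}$ with $\mbox{(IPATRE)}$ for $f_\lambda$, and this is a direct application of the three proximal calculus identities listed in the excerpt.
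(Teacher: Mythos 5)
Your proposal is correct and follows exactly the route the paper intends: the paper itself gives no separate proof but presents the theorem as an immediate consequence of Theorems \ref{convergencealgorithm} and \ref{strconvergencealgorithm} applied to $f_\lambda$, translated back to $f$ via the three proximal-calculus identities. Your splitting of $f_\lambda(x_k)-\min_{\cH} f$ into the two nonnegative summands and the identification $\nabla f_\lambda(x_k)=\lambda^{-1}(x_k-\prox_{\lambda f}(x_k))$ are precisely the intended (and valid) details.
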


\section{Conclusion, perspective}
In the  framework of convex optimization in general Hilbert spaces, we have introduced an inertial dynamic in which the damping coefficient and the  Tikhonov regularization coefficient vanish as time tends to infinity. 
The judicious adjustment of these parameters makes it possible to obtain trajectories converging quickly (and strongly) towards the minimum norm solution.
This seems to be the first time that these two properties have been obtained for the same dynamic. Indeed, the Nesterov accelerated gradient method and the  hierarchical minimization attached to the Tikhonov regularization are fully effective within this dynamic.
On the basis of Lyapunov's analysis, we have developed an in-depth mathematical study of the dynamic which is a valuable tool for the development of corresponding results for algorithms obtained by temporal discretization. We thus obtained similar results for the corresponding proximal algorithms.
This study opens up a large field of promising research concerning  first-order optimization algorithms.
Many interesting questions such as the introduction of Hessian-driven damping to attenuate oscillations 
\cite{ACFR}, \cite{APR}, \cite{BCL}, and the study of the impact of errors, perutrbations, deserve further study.
These results also adapt well   to the numerical analysis of inverse problems for which strong convergence and obtaining a solution close to a desired state are key properties.

\if
{
\section{Numerical experiments}\label{sec:numerical}

In this section we consider two numerical experiments for the trajectories generated by the dynamical system {\rm({TRIGS})} for a convex but not strongly convex objective function
$$f:\R^2\to\R,\,\,\,f(x,y)=(ax+by)^2\mbox{ where }a,b\in\R\setminus\{0\}.$$
Observe that $\argmin f=\left\{\left(x,-\frac{a}{b}x\right):x\in\R\right\}$ and $\min f=0$. Obviously, the minimizer of minimal norm of $f$ is $x^*=(0,0).$
In the following numerical experiments the continuous time dynamical system {\rm({TRIGS})} is solved numerically with the ode45 adaptive method in MATLAB on the interval $[1, 50]$. For the Tikhonov regularization parameter we take $\e(t)=\frac{1}{t^{r}},\,r\in\{0.6,1,1.5,2\}$ and we consider the starting points $x(1)=(1,1),\,\dot{x}(1)=(-1,-1).$

In our first experiment we take $\d=2$, and we study the evolution of the two errors $\|x(t)-x^*\|$ and $f(x(t))-\min f$, for a trajectory $x(t)$ generated by the dynamical system {\rm({TRIGS})}, with different values of $a$ and $b$. So we take $a=2$ and $b=1$, values for which the function $f$ is well conditioned. The results are depicted on Figure 1, where the $y$ axis is endowed with a logarithmic scale.
\begin{figure}[hbt!]
  \centering
  \includegraphics[width=.99\linewidth]{energ1-eps-converted-to}
 \caption{Error analysis with different Tikhonov regularization parameters in the dynamical system {\rm({TRIGS})}  for a well conditioned convex objective function.}
\end{figure}\label{fig1}

Further, we consider $a=100$ and $b=0.2$, values for which the function $f$ is poorly conditioned. The other parameters remain unchanged. The results are depicted on Figure 2, where the $y$ axis is endowed with a logarithmic scale.
\begin{figure}[hbt!]
  \centering
  \includegraphics[width=.99\linewidth]{energ2-eps-converted-to}
 \caption{Error analysis with different Tikhonov regularization parameters in the dynamical system {\rm({TRIGS})}  for a poorly conditioned convex objective function.}
\end{figure}\label{fig2}

In our second experiment we are interested on the sensitivity of the generated trajectories  of the dynamical system {\rm({TRIGS})} when we change the value of $\d.$ So first we fix $\e(t)=\frac{1}{t^{0.66}},$ (note that $0.66\approx\frac{2}{3}$), and we consider $\d\in\{0.5,1,1.5,2,3\}$ for the well conditioned case $a=2,\,b=1.$ The results are depicted on Figure 3, where the y axis is endowed with a logarithmic scale.
\begin{figure}[hbt!]
  \centering
  \includegraphics[width=.99\linewidth]{energ3-eps-converted-to}
 \caption{Error analysis with different values of parameter $\d$ in the dynamical system {\rm({TRIGS})}, for $\e(t)=\frac{1}{t^{0.66}}.$}
\end{figure}\label{fig3}

Now we take the other extreme of the Tikhonov regularization parameter, namely $\e(t)=\frac{1}{t^{2}},$ and we consider the same values $\d\in\{0.5,1,1.5,2,3\}$ for the well conditioned case $a=2,\,b=1.$ The results are depicted on Figure 4, where the y axis is endowed with a logarithmic scale.
\begin{figure}[hbt!]
  \centering
  \includegraphics[width=.99\linewidth]{energ4-eps-converted-to}
 \caption{Error analysis with different values of parameter $\d$ in the dynamical system {\rm({TRIGS})}, , for $\e(t)=\frac{1}{t^{2}}.$}
\end{figure}\label{fig4}

{\bf Introducing the Hessian driven damping in (TRIGS)}

A natural extension of (TRIGS) is to add into the governing differential equation a Hessian driven damping term \cite{APR},\cite{BCL},\cite{ACFR}, that is to consider the differential equation
\begin{equation}\label{edohess}
 \mbox{(DIN)$_{\d,\b}$} \quad \quad \ddot{x}(t) + \d\sqrt{\e(t)}  \dot{x}(t) + \nabla f (x(t))+\b\n^2 f(x(t))\dot{x}(t) + \e (t) x(t) =0,
\end{equation}
where $\b>0$.
However, if we consider  the Hessian driven damping system with $\mu-$strongly convex function \cite{siegel,WRJ}, that is
$$\ddot{x}(t)+2\sqrt{\mu}\dot{x}(t)+\n f(x(t))+\b\n^2 f(x(t))\dot{x}(t)=0,$$
then  following the approach of Section 1.1  we obtain another differential equation.

Indeed, by taking the $\e$-strongly convex function $f_t(x)=f(x)+\frac{\e(t)}{2}\|x\|^2$ in the previous differential equation, and taking the general $\d$  we obtain
\begin{equation}\label{edohessnew}
\mbox{(DINnew)$_{\d,\b}$} \quad\quad \ddot{x}(t)+\left(\d\sqrt{\e(t)}+\b\e(t)\right)\dot{x}(t)+\n f(x(t))+\b\n^2 f(x(t))\dot{x}(t)+\e(t) x(t)=0,
\end{equation}
where $\b>0$.
Note that for $\b=0$ both of the above systems become (TRIGS).

In the following numerical experiment we show that indeed, by introducing the Hessian driven damping term in (TRIGS) we obtain a smoothing effect. To this end we  take $a=2$ and $b=1$, values for which the function $f$ is well conditioned and we consider $\d=2$ and $r=2$. Further we consider different values of $\beta.$ The results are depicted on Figure 5 for {(DINnew)$_{\d,\b}$} and Figure 6 for {(DIN)$_{\d,\b}$}.

\begin{figure}[hbt!]
  \centering
  \includegraphics[width=.99\linewidth]{hess1-eps-converted-to}
 \caption{Error analysis with different values of parameter $\b$ in the dynamical system \rm{(DINnew)$_{\d,\b}$}, for $\d=2$ and $\e(t)=\frac{1}{t^{2}}.$}
\end{figure}\label{fig5}

\begin{figure}[hbt!]
  \centering
  \includegraphics[width=.99\linewidth]{hess2-eps-converted-to}
 \caption{Error analysis with different values of parameter $\b$ in the dynamical system \rm{(DIN)$_{\d,\b}$}, for $\d=2$ and $\e(t)=\frac{1}{t^{2}}.$}
\end{figure}\label{fig6}
We conclude that \rm{(DINnew)$_{\d,\b}$} has a better behaviour than \rm{(DIN)$_{\d,\b}$} and indeed both systems generate trajectories with a smoothing effect.

}
\fi

\end{document}